\DeclareMathOperator{\Diff}{Diff}
\DeclareMathOperator{\Ric}{Ric}
\DeclareMathOperator{\interior}{Int}
\DeclareMathOperator{\spt}{spt}
\DeclareMathOperator{\Id}{Id}
\DeclareMathOperator{\Vol}{Vol}
\def\R{\mathbb R}
\def\N{\mathbb N}
\def\Si{\Sigma}
\def\G{\Gamma}
\def\Om{\Omega}
\def\F{\mathbf{F}}
\def\dist{\mathrm{dist}}
\newtheorem{thm}{Theorem}[section]
\newtheorem{remark}[thm]{Remark}
\newtheorem{lemma}[thm]{Lemma}
\newtheorem{coro}[thm]{Corollary}
\newtheorem{prop}[thm]{Proposition}
\newtheorem{conj}{Conjecture}
\newtheorem{defi}[thm]{Definition}
\newtheorem{question}[thm]{Question}
\renewcommand{\epsilon}{\varepsilon}
\begin{document}
\title[On the existence of minimal Heegaard surfaces]
{On the existence of minimal Heegaard surfaces}

\author{Daniel Ketover}\address{Department of Mathematics\\Rutgers University \\Piscataway, NJ 08854}
\author{Yevgeny Liokumovich}
\address{Department of Mathematics\\University of Toronto\\Toronto, ON M5S2E4}
\author{Antoine Song}
\address{California Institute of Technology\\ 177 Linde Hall, \#1200 E. California Blvd., Pasadena, CA 91125}

\thanks{D.K. was partially supported by an NSF Postdoctoral Research fellowship, ERC-2011-StG-278940, and NSF DMS-1906385. Y.L. was partially supported by NSF DMS-1711053 and NSERC Discovery grants. A.S. was partially supported by NSF-DMS-1509027. This research was partially conducted during the period A.S. served as a Clay Research Fellow.}

\email{dk927@math.rutgers.edu}
\email{ylio@math.toronto.edu}
\email{aysong@caltech.edu}

\begin{abstract}
Let $H$ be a strongly irreducible Heegaard surface in a closed oriented Riemannian $3$-manifold.  We prove that $H$ is either isotopic to a minimal surface of index at most one or isotopic to the boundary of a tubular neighborhood about a non-orientable minimal surface with a vertical handle attached. This confirms a long-standing conjecture of J. Pitts and J.H. Rubinstein. 
\end{abstract}

\maketitle

\section{Introduction}

Given a surface $\Sigma$ embedded in a Riemannian 3-manifold, it is natural to ask whether $\Sigma$ contains a minimal surface representative in its isotopy class.   If $\Sigma$ is an incompressible surface 
embedded in a $3$-manifold, a result of B. Meeks, L. Simon and S.T. Yau \cite{MSY} implies that one can minimize area in the isotopy class of $\Sigma$ to obtain a stable minimal surface. 

Many $3$-manifolds contain no incompressible surfaces and so it may not be possible to construct minimal surfaces by minimization techniques.  On the other hand, every 3-manifold admits a Heegaard surface and one can try to represent such a surface by an index $1$ minimal surface using min-max methods.

Roughly speaking, the idea is as follows. A Heegaard splitting determines a continuous family of surfaces foliating the manifold and whose two ends are graphs (the spines of the two handlebodies).  Taking a sequence of such $1$-parameter smooth families of surfaces $\{\Sigma^i_t\}_{t\in[0,1]}$ sweeping out the manifold, which are tighter and tighter in the sense that $\max_t \mbox{Area}(\Sigma^i_t)$ converges to the infimum possible among such Heegaard sweepouts, one might hope that a subsequence of surfaces $\Sigma^{j}_{t_{j}}$ converges (in a weak sense) to a minimal surface with controlled topology.  This was carried out by L. Simon and F. Smith in the 80s (see \cite{C&DL}, \cite{DP} for an exposition).  The optimal genus bounds below were obtained in \cite{Ketgenusbound}:

\begin{thm}[Simon--Smith (1983)]\label{simonsmith}
If $H$ is a Heegaard surface of genus $g$ in an oriented Riemannian 3-manifold, then there exists a sequence of surfaces $\Sigma_i$ isotopic to $H$, and pairwise disjoint embedded minimal surfaces $\Gamma_1,...,\Gamma_k$ as well as positive integers $n_1,...,n_k$ so that
\begin{equation}
\Sigma_i\rightarrow \sum_{j=1}^k n_j\Gamma_j \mbox{ in the sense of varifolds.}
\end{equation}
Moreover, there holds
\begin{equation}
\sum_{i\in\mathcal{O}} n_i g(\Gamma_i)+\frac{1}{2}\sum_{i\in\mathcal{N}} n_i (g(\Gamma_i)-1)\leq g,
\end{equation}
where $\mathcal{O}$ denotes the subcollection of indices $j$ such that $\Gamma_j$ is orientable, $\mathcal{N}$ denotes the subcollection of indices $j$ such that $\Gamma_j$ is non-orientable and $g(\Gamma_j)$ denotes the genus of $\Gamma_j$.  Moreover, $n_i$ is even whenever $i\in\mathcal{N}$.
\end{thm}

In the above theorem the genus $g(\Gamma_j)$ of a nonorientable surface $\Gamma_j$
is defined as the number of cross-caps one must add to a sphere to obtain a surface homeomorphic to $\G_i$.

Note that the min-max minimal surface produced by Theorem \ref{simonsmith} may be disconnected and some components may have integer multiplicities. 

While Theorem \ref{simonsmith} implies that every 3-manifold contains an embedded minimal surface, only in the case of the 3-sphere does Theorem \ref{simonsmith} produce a minimal surface with prescribed topology, namely, a minimal 2-sphere.  Starting with a stabilized Heegaard splitting, one does not expect to obtain an isotopic minimal surface.   For instance, considering genus $1$ splittings of the round 3-sphere, Theorem \ref{simonsmith} produces a minimal sphere, and not a torus\footnote{F.C. Marques and A. Neves (\cite{MaNeindexbound}, Theorem 1.2) showed that one obtains a minimal surface with index at most $1$ when one considers one-parameter sweepouts.  On the other hand, the only embedded minimal tori in $\mathbb{S}^3$ have Morse index $5$.}.

However, in the 80s J. Pitts and J.H. Rubinstein outlined a claim that in the case when the Heegaard surface $H$ is assumed to be \emph{strongly irreducible}, one could rule out degeneration of the min-max sequence.  

A Heegaard surface $\Sigma$ is called \emph{strongly irreducible} if every closed curve on $\Sigma$ bounding an essential disk in one of the handlebodies intersects every such curve bounding an essential disk in the other handlebody.   Strongly irreducible splittings were introduced by A.J. Casson and C.M. Gordon \cite{CassonGordon} who proved that in non-Haken manifolds, one can always destabilize (or reduce the genus of) a Heegaard splitting until it is strongly irreducible.  In other words, in non-Haken manifolds an irreducible Heegaard splitting is also strongly irreducible.    

Precisely, J. Pitts and J.H. Rubinstein conjectured (\cite[Theorem 1.8]{Rubinsteinnotes})

\begin{conj}[J. Pitts and J.H. Rubinstein 1980s \cite{Rubinsteinnotes}] \label{rubinstein}

Let $(M,g)$ be a closed oriented Riemannian $3$-manifold not diffeomorphic to the $3$-sphere and suppose that there is a strongly irreducible Heegaard splitting $H$. Then one of the following holds:
\begin{enumerate}[label=\roman*)]
\item $H$ is isotopic to an index $1$ or $0$ minimal surface 
\item $H$ is isotopic to the boundary of the tubular neighborhood of a non-orientable minimal surface with a vertical handle attached.
\end{enumerate}

\end{conj}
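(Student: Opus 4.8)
The plan is to run a Simon--Smith min-max over the family of Heegaard sweepouts isotopic to $H$, produce a minimal surface as the limit, and then exploit strong irreducibility --- a purely topological hypothesis --- to eliminate all degenerations of the min-max sequence except the two appearing in the conclusion. First I would fix a sweepout $\{\Sigma_t\}_{t\in[0,1]}$ with $\Sigma_t$ isotopic to $H$ for $t\in(0,1)$ and with $\Sigma_0,\Sigma_1$ the spines of the two handlebodies, form its saturation $\Pi$ under ambient isotopy, and set $W=\inf_{\Pi}\sup_t\mathrm{area}$. By Theorem \ref{simonsmith}, together with the Colding--De Lellis regularity theory and the index estimates of Marques--Neves, $W>0$ is attained by a smooth embedded minimal surface $V=\sum_j n_j\Gamma_j$ of area $W$, of Morse index at most one, and obeying the genus bound of Theorem \ref{simonsmith}; moreover one can arrange the maximizing slices $\Sigma^i_{t_i}\cong H$ to converge to $V$ smoothly and graphically away from finitely many points.

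Next I would bring in the structure theory of strongly irreducible splittings (Casson--Gordon, Scharlemann--Thompson thin position): in any sweepout whose generic slice is isotopic to $H$, the thin slices are incompressible and only one thick slice, isotopic to $H$, is relevant. The point is that a sequence $\Sigma_i\cong H$ can only concentrate along either (a) one two-sided orientable minimal surface with multiplicity one, or (b) one one-sided non-orientable minimal surface $\Gamma$ with multiplicity two, with $\Sigma_i$ asymptotic to $\partial N(\Gamma)$; any other configuration lets one surger $\Sigma_i$ along disjoint compressing disks lying in \emph{both} handlebodies, violating strong irreducibility. To make this precise I would show: if an orientable $\Gamma_j$ occurs with $n_j\ge 2$, the $\Sigma_i$ resemble $n_j$ nearly parallel graphs joined by thin necks, and the catenoid estimate of Ketover--Marques--Neves lets one surger these necks and push the sweepout past $\Gamma_j$ with strictly smaller maximal area, contradicting the definition of $W$; the same argument forbids more than one component of positive multiplicity. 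Hence $V=n\Gamma$ with $\Gamma$ connected, $n=1$ if $\Gamma$ is two-sided, and $n=2$ if $\Gamma$ is one-sided.

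It then remains to identify the isotopy class of $H$. If $V=\Gamma$ is two-sided of multiplicity one, the smooth graphical convergence $\Sigma_i\cong H\to\Gamma$ shows that $\Gamma$ cuts $M$ into the two handlebodies, so $H$ is isotopic to $\Gamma$, giving alternative (i). If $V=2\Gamma$ with $\Gamma$ non-orientable, then $\Sigma_i$ is isotopic to $\partial N(\Gamma)$ for large $i$; comparing $g(H)$ with $g(\partial N(\Gamma))$ via the genus inequality of Theorem \ref{simonsmith}, either they agree --- and $H$ is isotopic to $\partial N(\Gamma)$ --- or $g(H)$ exceeds it by exactly one, in which case the extra handle, being forced by strong irreducibility to be unknotted and unlinked from a meridian disk of the twisted $I$-bundle $N(\Gamma)$, must be isotopic to a vertical handle over $\Gamma$; this is alternative (ii).

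The hard part, as usual in this circle of ideas, is turning the varifold convergence into genuine smooth, Heegaard-compatible convergence, and then matching the analytic degeneracies (neck formation, higher multiplicity) with the three-manifold topology precisely enough to invoke strong irreducibility. In particular, the decisive step is showing that the only surviving degeneration is the one-sided double cover, possibly with a vertical handle --- and not a two-sided double cover, a disconnected limit, or a limit hiding an extra handle --- which requires a careful interpolation and surgery construction producing competitor sweepouts of strictly smaller width whenever a forbidden configuration occurs, together with a delicate analysis of the non-orientable case where the doubling phenomenon and the vertical handle arise. This is where the strong-irreducibility hypothesis does its essential work.
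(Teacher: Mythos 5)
There is a genuine gap, and it is exactly the gap in Pitts--Rubinstein's original sketch that this paper is written to close. Your mechanism for excluding bad limits is: whenever a component occurs with multiplicity $\geq 2$, or the limit is disconnected or of the wrong genus, use the catenoid estimate to surger the necks and ``push the sweepout past'' the offending surface with strictly smaller maximal area, contradicting the definition of $W$. This works only when the surface being crossed is \emph{unstable}: the catenoid estimate pushes down along the first eigenfunction of the Jacobi operator, and for a strictly stable surface any normal push-off increases area to second order, so no competitor sweepout of smaller width exists. But the dangerous degenerations are precisely onto stable configurations: the min-max varifold may be a union of \emph{stable} $2$-spheres with integer multiplicities (Proposition \ref{strongirreduce} only forces multiplicity one for orientable components of positive genus, not for spheres), possibly together with a lower-genus unstable component that is \emph{not} isotopic to $H$ --- strong irreducibility does not forbid this, since all essential surgeries may go to one side (Scharlemann), so no pair of disjoint compressing disks to opposite sides is produced and your purely topological contradiction is not available. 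Example \ref{examplee} shows degeneration onto stable objects genuinely occurs, so these cases cannot be argued away.

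The paper's route is structurally different at exactly this point: instead of a one-shot global min-max, one removes a maximal collection of compression bodies bounded by stable minimal surfaces to form the $H$-core (Subsection \ref{corecore}, Lemma \ref{max spheres}), and then proves the Local Min-max Theorem \ref{smoothminmax}, which asserts that the min-max limit in the core cannot lie entirely on the stable boundary with multiplicity. Its proof is the analytic heart of the paper: if the limit were $\sum m_i|S_i|$ with $S_i\subset\partial N$, a parity/enclosed-volume argument shows part of the old sweepout is still a sweepout, and the Interpolation Theorem \ref{maininterpolation} (Sections \ref{first inter}--\ref{second inter}: squeezing maps, stacking, the generalized Light Bulb Theorem to find and open necks) closes up its ends with arbitrarily small area increase, producing a sweepout strictly below the width --- a contradiction. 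Even then, identifying the interior unstable component with $H$ is not automatic: the paper uses Meeks--Simon--Yau minimization on the incompressible side together with minimality of the core (and Heath's theorem in the one-sided case, plus bumpy approximation for general metrics). None of these steps is supplied, or replaceable, by the catenoid estimate alone, so your proposal as written does not rule out the stable-multiplicity and compressed-limit scenarios.
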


Let us explain the situation in case ii).  Given a non-orientable surface $\Gamma$ embedded in a 3-manifold consider the $\epsilon$-tubular neighborhood $T_\epsilon(\Gamma)$ about $\Gamma$.  The set $T_\epsilon(\Gamma)$ is a twisted interval bundle over $\Gamma$ and its boundary $\partial(T_\epsilon(\Gamma))$ is an orientable surface.   The operation of \emph{attaching a vertical $1$-handle} to $\partial(T_\epsilon(\Gamma))$ means the following: for some $p\in\Gamma$ and small $\delta > 0$ we remove from the surface $\partial(T_\epsilon(\Gamma))$ the two disks given by 
\begin{equation}
D_\pm=\{\exp_{x\in\Gamma}(\pm\epsilon n(x))\;|\;\mbox{dist}_\Gamma(x,p)\leq\delta\}
\end{equation}
and add in the annulus 
\begin{equation}
A=\{\exp_{x\in\Gamma}(t n(x))\;|\;\;\mbox{dist}_\Gamma(x,p)=\delta \mbox{ and } -\epsilon\leq t\leq\epsilon\},
\end{equation}
where $n(x)$ denotes a choice of unit normal near $p$.

Case ii) can arise already when considering genus $1$ splittings of $\mathbb{RP}^3$.  After a single neck-pinch on such a Heegaard torus one obtains a 2-sphere bounding a twisted interval bundle over $\mathbb{RP}^2$.  Since $H$ is a Heegaard surface it follows that the complement of the non-orientable surface in case ii) of Conjecture \ref{rubinstein} is itself a handlebody.   Such a non-orientable surface is called a \emph{one-sided Heegaard splitting}, as introduced by Rubinstein (Section 0 in \cite{Rubinstein}).  An example is $\mathbb{RP}^2\subset\mathbb{RP}^3$ as the complement of $\mathbb{RP}^2$ in $\mathbb{RP}^3$ is a 3-ball.  Spherical space-forms with Heegaard genus $2$ that contain embedded Klein bottles give other examples where case ii) can arise.

Since in Theorem \ref{simonsmith} the minimal surface obtained by min-max methods could be broken into several components and thus not isotopic to $H$, the idea of J. Pitts and J.H. Rubinstein was to iterate the min-max procedure until the desired situation occurred. Roughly speaking, their argument was as follows (cf. \cite[Theorem 1.8]{Rubinsteinnotes}).  By strong irreducibility, any degeneration of the min-max sequence could only be along neck-pinches bounding disks in \emph{one} of the handlebodies.   If this degeneration occurs, one then can remove the handlebodies bounded by the several minimal surfaces to obtain a manifold with minimal boundary $M'$.   As one of the minimal boundary components should have index $1$, one could minimize area for the unstable component of $\partial M'$ into $M'$ to get a new manifold $M''$ with stable boundary.  One then applies min-max to $M''$ and iterates.  Since $M''$ has stable boundary, and the min-max limit should have an unstable component, at each stage of the iteration the manifold shrinks.   If the process does not stop, one obtains infinitely many nested minimal surfaces with bounded genus, which gives rise to a Jacobi field.  If the metric is bumpy (which White proved is a generic condition) then this gives a contradiction.  Thus the process stops after finitely many steps at a minimal surface isotopic to the Heegaard surface.

The argument sketched by J. Pitts and J.H. Rubinstein was incomplete on  two points.   First, they assume that the min-max surface contains the topological information of the sweepouts in the sense that it arises from neck-pinch surgeries on the Heegaard surface.  This was proved by the first-named author \cite{Ketgenusbound}. Secondly, in order to run the iteration scheme, they suggest to apply the min-max theorem to a subdomain $M''$ of the manifold with stable minimal boundary $\partial M''$.  The key claim is that one can obtain a minimal surface in the \emph{interior} of such a subdomain. This needs to be justified as a min-max procedure may just give rise to the boundary $\partial M''$ where some $2$-sphere component may have positive integer multiplicity (see Theorem \ref{smoothminmax}). 



In this paper, we complete this second ingredient of J. Pitts and J.H. Rubinstein's program and prove their conjecture (and in fact, a bit more):

\begin{thm}  \label{introthm}
Let $(M,g)$ be a closed oriented $3$-manifold. Suppose that there is a strongly irreducible Heegaard splitting $H$ which, in the case where $M$ is a $3$-sphere, is supposed to be a $2$-sphere. Then one of the following holds:
\begin{enumerate}[label=\roman*)]
\item H is isotopic to a minimal surface $\Sigma_1$ of index at most one
\item H is isotopic to the boundary of a tubular neighborhood of a non-orientable minimal surface $\Sigma_2$ with a vertical handle attached and the double cover of $\Sigma_2$ is stable.
\end{enumerate}
If moreover the metric is bumpy, we can assume in case i) that $\Sigma_1$ has index $1$ and in case ii) that there is an orientable index $1$ minimal surface $\Sigma_3$ of genus $genus(H)-1$ which is isotopic to the boundary of the tubular neighborhood of $\Sigma_2$.  
\end{thm}

Theorem \ref{introthm} shows that for generic metrics, we can obtain minimal surfaces of index exactly one and with controlled topological type. We emphasize that it is the first such construction beyond the case of minimal 2-spheres in 3-spheres where \emph{no curvature assumption} on the ambient manifold is required 
to obtain minimal surfaces of prescribed topology.

Theorem \ref{introthm} is sharp in that case i) may not hold and instead only the second case occurs. We give an example in Proposition \ref{example} of a metric on $\mathbb{RP}^3$ containing no index $1$ or $0$ Heegaard tori.  The width of this manifold with respect to genus $1$ Heegaard splittings is realized by an index $1$ minimal 2-sphere (the surface $\Sigma_3$ in the notation of Theorem \ref{introthm}) which bounds a twisted interval bundle over a minimal $\mathbb{RP}^2$ with stable double cover.    

Note that case ii) in Theorem \ref{introthm} can often be excluded \emph{a priori} for topological reasons.  For instance, it cannot occur when $H_2(M,\mathbb{Z}_2)=0$ (see for instance Theorem 1 in  \cite{Rubinstein}).  In spherical space-forms other than $\mathbb{RP}^3$, there are no embedded projective planes, and thus case ii) cannot occur when $H$ is a strongly irreducible Heegaard torus in a lens space $L(p,q)\neq \mathbb{RP}^3$.  Note also that the assumption of strong irreducibility in Theorem \ref{introthm} is essential. We give an example in Proposition \ref{counterexample} of a metric on $\mathbb{S}^2\times\mathbb{S}^1$ for which there is no minimal Heegaard torus of index at most $1$ isotopic to the genus $1$ Heegaard surface. S. Montiel and A. Ros \cite{MR} have also shown that in flat tori formed by nearly degenerate lattices, there are no index $1$ minimal surfaces of genus $3$.

Theorem \ref{introthm} gives more information than the original conjecture of J. Pitts and J.H. Rubinstein in that in the second case we prove that the non-orientable surface has stable cover.  This improvement relies on the catenoid estimate of the first-named author, F.C. Marques and A. Neves \cite{KeMaNe}.  We also prove in this case that we still obtain a genus $g(H)-1$ orientable minimal surface embedded in $M$, which was not part of J. Pitts and J.H. Rubinstein's original claim. 


Our main result also has several applications in 3-manifold topology.  As explained in \cite{Rubinsteinnotes}, it implies that there are only finitely many irreducible Heegaard surfaces of a given genus (up to isotopy) in a hyperbolic non-Haken 3-manifold.  Such a result was conjectured by F. Waldhausen in the 70s for all non-Haken 3-manifolds.  Waldhausen's conjecture was proved by T. Li (\cite{TaoLI1} \cite{TaoLI2} \cite{TaoLI3}) using \emph{almost normal surfaces} which are the combinatorial analog of index $1$ minimal surfaces. Thus Theorem \ref{introthm} gives an analytic proof of T. Li's theorem in the hyperbolic case.  T.H. Colding and D. Gabai  \cite{ColdingGabai} used the correspondence between Heegaard splittings and minimal surfaces provided by Theorem \ref{introthm} to give effective versions of T. Li's theorem in hyperbolic manifolds.  With the first-named author, T.H. Colding and D. Gabai \cite{ColdingGabaiKetover} completed the classification problem for Heegaard splittings of non-Haken hyperbolic 3-manifolds using a $2$-parameter min-max argument.

Specializing Theorem \ref{introthm} to lens spaces, we obtain:

\begin{coro} \label{feinherb}
\begin{enumerate}
\item Any lens space not diffeomorphic to $\mathbb{S}^3$ or $\mathbb{RP}^3$ contains a minimal torus with index at most one.
\item Any $(\mathbb{RP}^3,g)$ contains either a minimal torus of index at most one or a minimal projective plane with stable universal cover. If the metric is bumpy, then either there is an index 1 minimal torus or there is an index 1 minimal sphere.

\end{enumerate}
 \end{coro}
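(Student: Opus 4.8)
The plan is to apply Theorem~\ref{introtheo} to the standard genus-one Heegaard splitting of the lens space and then use elementary three-manifold topology to control which of the two alternatives can occur. Write $M=L(p,q)$; excluding $\mathbb{S}^3$ means $p\geq 2$, and excluding $\mathbb{RP}^3$ in part (1) means $p\geq 3$. First I would record that the genus-one Heegaard torus $H=\partial V_1=\partial V_2$ of $L(p,q)$ is \emph{strongly irreducible} for $p\geq 2$: in each solid torus $V_i$ every essential (compressing) disk is isotopic to the meridian disk, and on $H$ the boundary curve of the meridian of $V_1$ has geometric intersection number exactly $p\geq 2$ with the boundary curve of the meridian of $V_2$; since this intersection number is positive and isotopy-invariant, every curve bounding an essential disk in $V_1$ meets every curve bounding an essential disk in $V_2$. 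Hence Theorem~\ref{introtheo} applies to $H$ (and the $\mathbb{S}^3$ exception in Theorem~\ref{introtheo}, requiring a $2$-sphere, is not an issue here since $\mathbb{S}^3$ is excluded).

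For part (1), Theorem~\ref{introtheo} gives that either $H$ is isotopic to a minimal surface $\Sigma_1$ of index at most one — which, being isotopic to a torus, is the desired minimal torus of index at most one — or case (ii) holds, and I claim this cannot occur when $M=L(p,q)\neq\mathbb{RP}^3$. Indeed, in case (ii) $H$ is isotopic to the boundary of a tubular neighborhood of a non-orientable minimal surface $\Sigma_2$ with a vertical handle attached; if $\Sigma_2$ has $k$ crosscaps then its twisted $I$-bundle neighborhood has orientable boundary of genus $k-1$, and attaching the vertical handle raises this to $k$, so $H$ being a torus forces $k=1$ and $\Sigma_2=\mathbb{RP}^2$. (Already when $p$ is odd this is impossible, since an embedded closed non-orientable surface is non-separating and would give $H_2(M;\mathbb{Z}_2)\neq 0$, whereas $H_2(L(p,q);\mathbb{Z}_2)=0$ for $p$ odd.) But an embedded $\mathbb{RP}^2$ has a twisted $I$-bundle neighborhood with $\partial$ an $S^2$, so $M$ is the union of this bundle with a compact manifold $W$ with $\partial W=S^2$; as lens spaces with $p\geq 2$ are irreducible, $S^2$ bounds a ball, forcing $M=\mathbb{RP}^3$, a contradiction. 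Thus case (ii) is vacuous and part (1) follows.

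For part (2), take $M=\mathbb{RP}^3=L(2,1)$, so $H$ is again a strongly irreducible Heegaard torus. Case (i) of Theorem~\ref{introtheo} gives a minimal torus of index at most one. In case (ii), the crosscap count above forces $\Sigma_2=\mathbb{RP}^2$, and the conclusion that its double cover is stable says precisely that this minimal projective plane has stable universal cover (its orientable double cover and its universal cover both being $S^2$); this yields the first dichotomy of part (2). If moreover the metric is bumpy, the refined statement of Theorem~\ref{introtheo} upgrades case (i) to an index one minimal torus, and in case (ii) produces an orientable index one minimal surface $\Sigma_3$ of genus $\operatorname{genus}(H)-1=0$, i.e.\ an index one minimal sphere; this gives the second dichotomy and completes the proof.

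The whole argument is a direct specialization of Theorem~\ref{introtheo}, and the only delicate points are the topological bookkeeping: verifying strong irreducibility of the genus-one splitting via the intersection number $p$, identifying $\Sigma_2=\mathbb{RP}^2$ through the crosscap/genus computation, and invoking irreducibility of lens spaces to conclude that a lens space with $p\geq 2$ containing an embedded $\mathbb{RP}^2$ must be $\mathbb{RP}^3$. None of these is hard, but each must be cited correctly; I do not expect any serious analytic obstacle beyond what is already packaged in Theorem~\ref{introtheo}.
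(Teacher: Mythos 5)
Your proof is correct and follows essentially the same route as the paper: Corollary \ref{feinherb} is obtained there exactly as a direct specialization of Theorem \ref{introtheo} to the genus-one Heegaard splitting, with case ii) excluded for $L(p,q)\neq\mathbb{RP}^3$ because such spaces contain no embedded projective planes. The only difference is that you supply elementary verifications (strong irreducibility via the meridian intersection number $p$, and the exclusion of an embedded $\mathbb{RP}^2$ via irreducibility) of facts the paper simply cites (Casson--Gordon and the standard facts about spherical space forms), which is fine.
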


The mapping degree method of B. White \cite{Whitebumpy} gives the existence of a minimal torus in every $\mathbb{RP}^3$ with positive Ricci curvature. This result was improved in \cite[Theorem 3.3]{KeMaNe} to obtain an index 1 minimal torus in such a manifold.

The problem of obtaining a lower index bound for minimal surfaces produced by min-max methods was studied by F.C. Marques and A. Neves in \cite{MaNeindexbound} where they proved that in the Almgren--Pitts setting, generically two-sided min-max minimal hypersurfaces coming from $1$-parameter sweepouts have index 1.  This led to the question of whether generically the index of min-max hypersurfaces coming from $k$-parameters sweepouts was equal to $k$. In the context of Allen-Cahn min-max theory it was proved by O. Chodosh and C. Mantoulidis \cite{ChodMant} in dimension $3$.  F.C. Marques and A. Neves proved these index bounds in the Almgren--Pitts setting for dimensions $3$ to $7$ \cite{MaNemultiplicityone}, by relying on the solution of the multiplicity one conjecture by X. Zhou \cite{Zhou19}. In our theorem, however, the topology of the minimal surfaces of index 1 produced can be precisely described, which is not possible in the works previously mentioned.  

Let us finally discuss the situation where the ambient manifold has positive scalar curvature.  Because the only stable orientable minimal surfaces in such manifolds are 2-spheres, we obtain an improvement of Corollary \ref{feinherb}:

\begin{coro}
In a spherical space form not diffeomorphic to $\mathbb{S}^3$ or $\mathbb{RP}^3$ with positive scalar curvature, any genus $1$ Heegaard surface admits an index 1 minimal surface in its isotopy class.
\end{coro}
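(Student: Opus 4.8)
The plan is to apply Theorem \ref{introtheo} together with the fact that stable minimal surfaces in a manifold of positive scalar curvature are severely restricted. Let $M$ be a spherical space form with $R > 0$, not diffeomorphic to $\mathbb{S}^3$ or $\mathbb{RP}^3$, and let $H$ be a genus $1$ Heegaard torus. First I would invoke Theorem \ref{introtheo} to conclude that $H$ is isotopic either to a minimal surface $\Sigma_1$ of index at most one (case i), or to the boundary of a tubular neighborhood of a non-orientable minimal surface $\Sigma_2$ with a vertical handle attached, where the orientable double cover of $\Sigma_2$ is stable (case ii). In case i) we are immediately done, since $\Sigma_1$ is the desired minimal surface in the isotopy class of $H$.

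The heart of the argument is to rule out case ii). The surface $\Sigma_2$ is a closed embedded non-orientable minimal surface in $M$, and its orientable double cover $\widetilde{\Sigma}_2$ is stable. Now recall the classical consequence of the second variation formula combined with the Gauss equation: if $R > 0$, then a closed orientable stable minimal surface in $M$ must be a $2$-sphere (one integrates the stability inequality against the constant function and uses $R = 2K_{\Sigma} + |A|^2 - 2\Ric(\nu,\nu) + \ldots$ together with Gauss-Bonnet to force the genus to be $0$). Hence $\widetilde{\Sigma}_2 = \mathbb{S}^2$, which means $\Sigma_2$ is a surface whose orientable double cover is a sphere, i.e. $\Sigma_2 = \mathbb{RP}^2$. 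But $M$ is a spherical space form not diffeomorphic to $\mathbb{S}^3$ or $\mathbb{RP}^3$; as remarked in the paragraph following Theorem \ref{introtheo}, such spherical space forms contain no embedded projective planes (equivalently $H_2(M;\mathbb{Z}_2) = 0$, so there is no embedded one-sided closed surface). This contradiction eliminates case ii), so only case i) can occur.

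It remains to upgrade the index bound from "at most one" to exactly one. In case i) of Theorem \ref{introtheo}, the surface $\Sigma_1$ has index $0$ or $1$. If $\Sigma_1$ had index $0$, it would be a stable orientable closed minimal surface, hence by the argument above a $2$-sphere, so it could not be isotopic to the genus $1$ Heegaard torus $H$. Therefore $\Sigma_1$ has index exactly one, giving an index one minimal surface in the isotopy class of $H$.

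The main obstacle is not any one step individually — each ingredient (the dichotomy of Theorem \ref{introtheo}, the $R > 0$ stability rigidity, the topological exclusion of $\mathbb{RP}^2$) is standard — but rather assembling them in the right order: one must be careful that the stability hypothesis in case ii) is on the \emph{double cover} of $\Sigma_2$ (which is exactly what Theorem \ref{introtheo} provides and what the $R>0$ rigidity needs, since the relevant minimal surface to which one applies the second variation argument is the orientable one), and that the topological fact "no embedded $\mathbb{RP}^2$" genuinely applies to the spherical space forms in question. Once these are lined up, the corollary follows with no further analysis.
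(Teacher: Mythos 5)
Your argument is essentially the paper's: positive scalar curvature forces any closed orientable stable minimal surface to be a $2$-sphere, so in case ii) of Theorem \ref{introtheo} the stable oriented double cover is a sphere, $\Sigma_2$ would be an embedded $\mathbb{RP}^2$, which is impossible in a spherical space form other than $\mathbb{RP}^3$; and in case i) index $0$ is excluded because a stable orientable surface would be a sphere and hence not isotopic to a genus $1$ Heegaard torus. This is exactly the route taken in the paper (see Theorem \ref{boomchak} and its proof).

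Two points need repair before the proof is complete. First, Theorem \ref{introtheo} applies only to \emph{strongly irreducible} splittings, and you never verify this hypothesis for an arbitrary genus $1$ Heegaard surface. It is true, and the paper records why: a spherical space form contains no incompressible surfaces (it is non-Haken), a manifold admitting a genus $1$ splitting and not diffeomorphic to $\mathbb{S}^3$ or $\mathbb{RP}^3$ is a lens space $L(p,q)$ with $p\geq 3$, so the genus $1$ splitting has minimal genus, hence is irreducible, hence strongly irreducible by Casson--Gordon \cite{CassonGordon}; without this step the main theorem cannot be invoked. Second, your parenthetical justification of ``no embedded projective planes'' via $H_2(M;\mathbb{Z}_2)=0$ and the absence of one-sided closed surfaces is not correct for all the manifolds in question: for $L(p,q)$ with $p$ even one has $H_2(M;\mathbb{Z}_2)=\mathbb{Z}_2$, and such lens spaces do contain embedded one-sided closed surfaces (e.g.\ Klein bottles in $L(4,1)$), just not projective planes. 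The fact you actually need --- that a spherical space form other than $\mathbb{RP}^3$ contains no embedded $\mathbb{RP}^2$ --- is nevertheless true and is the statement the paper quotes: an embedded $\mathbb{RP}^2$ in an orientable $3$-manifold is one-sided, its twisted interval-bundle neighborhood has boundary a $2$-sphere, and irreducibility forces that sphere to bound a ball, so the manifold would be $\mathbb{RP}^3$. With these two repairs your proof coincides with the paper's.
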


In positive scalar curvature, minimal surfaces of index at most 1 enjoy area upper bounds: for general area bounds, see  \cite{SchoenYau}, \cite[Proposition A.1 (i)]{MaNe}, and for the existence of minimal surfaces with strong area bounds, see \cite[Theorem 1.2]{MaNe} for 3-spheres with positive scalar curvature and \cite[Theorem 23]{Antoine} for general 3-manifolds with positive scalar curvature. An open question is whether one can find minimal Heegaard splittings with minimal genus and with strong area upper bounds in positive scalar curvature, see Question \ref{question}.

Let us mention some connections between Theorem \ref{introthm} and the literature on minimal surfaces and min-max theory. 
In the case where $M$ is a 3-sphere, Theorem \ref{introthm} was used \cite{HasKet} by R. Haslhofer and the first-named author to show that if a 3-sphere endowed with a bumpy metric contains a stable minimal 2-sphere it contains at least \emph{three} disjoint minimal 2-spheres. Z.C. Wang and X. Zhou \cite{WZ24}, building on the work of L. Sarnataro and D. Stryker \cite{SarnataroStryker2025}, obtained a multiplicity one theorem for unstable min-max minimal surfaces in the Simon-Smith setting; they used this result to show that a $3$-sphere with a bumpy metric or positive Ricci curvature contains \emph{four} distinct minimal spheres, resolving a conjecture of S.T. Yau in these cases. Note that the argument in \cite{WZ24} in the bumpy case also relies on the aforementioned results of \cite{HasKet} and thus our Theorem \ref{introthm}.  

A large part of this paper concerns constructing minimal surfaces inside the interior of a compact manifold with stable minimal boundary. The danger is that one only obtains from a min-max procedure the boundary itself with an integer multiplicity. This issue is recurrent in min-max theory and is related to the general problem of controlling the multiplicity of min-max hypersurfaces. It appears for instance in a paper of F.C. Marques and A. Neves on minimal surfaces in 3-spheres with positive scalar curvature \cite[Theorem 2.1]{MaNeindexbound}, in the third-named author's solution to Yau's abundance conjecture for minimal surfaces in 3-manifolds \cite[Theorem 10]{Son23a} \cite{Antoinenoncompact}, and in other settings as well \cite{HasKet,SZ21,WZ24}.  While the multiplicity one results of \cite{WZ24} rule out unstable minimal surfaces occurring with multiplicity greater than $1$, the question of whether (generically) stable minimal surfaces can occur with multiplicity remains unresolved in general.

\vspace{2em}
\indent
Let us now sketch some of the main ideas in the proof of Theorem \ref{introthm}. 

As remarked above, the essential missing piece in the iteration scheme of J. Pitts and J.H. Rubinstein is to show that when running a min-max procedure in a manifold $N$ with strictly stable minimal boundary, one can find a minimal surface in the interior of $N$.  For simplicity, consider the case where $N$ is a 3-ball and $\partial N$ is a strictly stable minimal 2-sphere.  One can sweep out $N$ by embedded 2-spheres and hope to produce an embedded minimal 2-sphere inside $N$.   Since $\partial N$ is minimal, it acts as a good barrier and so there is no difficulty in running the min-max process in this local setting.  The serious difficulty, however, is that one might just obtain $\partial N$ with some integer multiplicity $k\geq 1$.  One can easily rule out the case $k=1$ using the Squeezing map (Section \ref{sec:squeezing}) of F.C. Marques and A. Neves \cite{MaNeindexbound}.

We argue by contradiction and suppose that applying the min-max process to the 3-ball $N$ gives $\partial N$ with multiplicity $k>1$.  Consider a sequence of sweepouts $\{\Sigma^i_t\}_{t\in [0,1]}$ so that $\sup_{t\in[0,1]} \mbox{Area}(\Sigma^i_t)$ approaches the min-max value 
\begin{equation}
W_N:= k\mbox{Area}(\partial N)
\end{equation} 
as $i\rightarrow\infty$.  As a min-max limit is the varifold $\partial N$ counted with multiplicity $k$, suppose that for $i$ large and some $t_0\in (0,1)$, the surfaces $\{\Sigma^i_t\}_{t\in [t_0-\varepsilon_i,t_0+\varepsilon_i]}$ are within a fixed $\eta>0$ neighborhood of the varifold $\partial N$ with multiplicity $k$ (in some metric on the space of varifolds).  Suppose for simplicity that $[t_0-\varepsilon_i,t_0+\varepsilon_i]$ is the only such interval.  Using the area non-increasing Squeezing Map (see Section \ref{sec:squeezing}) in the neighborhood of a strictly stable minimal surface  we can also ensure that
\begin{equation}\label{arealow1}
\mbox{Area}(\Sigma^i_{t_0-\varepsilon})< W_N 
\end{equation}
and
\begin{equation}\label{arealow2}
\mbox{Area}(\Sigma^i_{t_0+\varepsilon})< W_N.
\end{equation}

The key observation is that either $\{\Sigma^i_t\}_{t\in [0,t_0-\varepsilon_i]}$ or $\{\Sigma^i_t\}_{t\in [t_0+\varepsilon_i,1]}$ must \emph{itself} be a sweepout of the entire manifold $N$ (apart from a tubular neighborhood of $\partial N$).    If $\Sigma^i_0$ is equal to $\partial N$ and $\Sigma^i_{t_0-\varepsilon_i}$ is weakly close to, say, an even multiple $k$ of $\partial N$, then $\Sigma^i_0$ bounds a region of zero volume on one side and a region of $\mbox{vol}(N)$ on the other side while $\Sigma^i_{t_0-\varepsilon_i}$ has the opposite property -- the zero volume side has now become nearly the entire manifold $N$ and vice versa.  Thus if $k$ is even, then $\{\Sigma^i_t\}_{t\in [0,t_0-\varepsilon_i]}$ gives a non-trivial sweepout.  Let us suppose without loss of generality that this is the case.

\begin{figure}
   \centering	
	\includegraphics[scale=0.75]{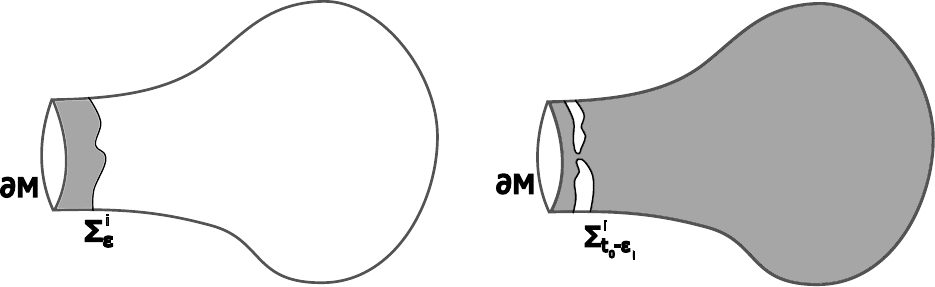}
	\caption{If $\Si^i_0 = \partial M$ and $\Si^i_{t_0- \varepsilon_i}$ is weakly close to $\partial M$ with multiplicity two, then $\{\Sigma^i_t\}_{t\in [0,t_0-\varepsilon_i]}$ sweeps out most of $M$.}
	\label{fig:sweepout}
\end{figure}


We then need to construct a sweepout supported near $\partial N$ that begins at the surface $\Sigma^i_{t_0-\varepsilon_i}$ (which weakly resembles $k$ copies of $\partial N$) and ends at the zero or trivial point surface.  More precisely, given any $\varepsilon>0$ we need an \emph{interpolating} family of surfaces $\{\Gamma_t\}_{t\in[0,1]}$ satisfying:

\begin{enumerate}[label=(\alph*)]
\item $\Gamma_0 = \Sigma^i_{t_0-\varepsilon_i}$ \label{1}
\item $\Gamma_1 = \mbox{trivial point surface}$ \label{2}
\item$\mbox{Area}(\Gamma_t)\leq \mbox{Area}(\Sigma^i_{t_0-\varepsilon_i}) +\varepsilon$. \label{3}
\end{enumerate}

Choosing $\varepsilon$ appropriately small, we can concatenate $\{\Sigma^i_t\}_{t\in [0,t_0-\varepsilon_i]}$ and $\{\Gamma_t\}_{t\in[0,1]}$ to obtain a new sweepout of $N$ with all areas less than $W_N$ (thanks to \eqref{arealow1} and \eqref{arealow2}).  This gives a contradiction to the definition of width.  The conclusion is that $N$ contains in its interior a minimal surface (with index at most $1$ by earlier work of Marques--Neves).  The interpolation result is discussed further in Section \ref{zero inter} and proved in Sections  \ref{first inter} and \ref{second inter}.  In brief, by Alexander's theorem there is always an isotopy satisfying \ref{1} and \ref{2} supported in a neighborhood of $\partial N$ (diffeomorphic to $\mathbb{S}^2\times [0,1]$) and the difficulty is to perform the isotopy satisfying the area constraint \ref{3}.  

Our proof of Theorem \ref{introthm} differs from J. Pitts and J.H. Rubinstein's original sketch in that rather than use an iteration procedure, we remove a maximal disjoint union of undesired handlebodies that could appear after min-max to obtain a compact $3$-manifold $N$ with stable minimal boundary which we call the core (see Subsection \ref{corecore}).  We then apply the local min-max theorem to this core $N$.

The organization of this paper is as follows.  In Section \ref{topuses} we introduce the necessary topological facts about strongly irreducible Heegaard splittings.   In Section \ref{zero inter} we introduce the Interpolation Theorem needed in the proof of Theorem \ref{introthm}. In Section \ref{first inter} we begin the proof of the Interpolation Theorem by showing that an embedded surface in a neighborhood of a strictly stable minimal surface $\Sigma \subset \partial M$ can be deformed through surfaces of controlled areas to a stack of $k$ graphs joined by a `thin' set comprising the necks.  In Section \ref{second inter} we complete the proof of the Interpolation Theorem 
by finding a neck to open to further reduce the number of sheets $k$ until the number of stacked sheets is either $1$ or $0$.  
In Section \ref{prelim} we prove the Local Min-Max Theorem allowing us to obtain a minimal surface in the interior of a manifold $N$ with strictly stable minimal boundary.  In Section \ref{Rubinstein conj} we prove Pitts--Rubinstein's conjecture (Theorem \ref{introthm}).  In Section \ref{sec8} we discuss optimality and important instances of our main result.

\subsection*{Acknowledgement} 

We would like to thank Toby Colding, Fernando Cod{\'a} Marques and Andr{\'e} Neves for their encouragement and numerous discussions, Francesco Lin for some topological advice and Dave Gabai for several conversations. We also thank Yangyang Li and Zhihan Wang, Rafael Montezuma, Amitesh Datta, Maggie Miller, Davi Maximo for useful comments.

We are grateful to the anonymous referee for numerous detailed comments and corrections that greatly helped to improve the exposition.

\vspace{2em}

\section{Strongly irreducible Heegaard splittings}\label{topuses}
In this section, we collect some properties about Heegaard splittings and show that performing surgeries on a strongly irreducible Heegaard surface can only make the surface degenerate in particular ways.  

\subsection{} \textbf{Topological preliminaries}\label{topprelim}

We begin with some basic definitions and notation in topology. A 3-manifold $W$ is a \textit{compression body} if there is a connected closed oriented surface $S$ such that $W$ is obtained from $S\times[0,1]$ by attaching $2$-handles along mutually disjoint loops in $S\times\{1\}$, and filling in some resulting $2$-sphere boundary components (not contained in $S\times\{0\}$) with $3$-handles. We do not require that all $2$-sphere boundary components are filled in. Denote $S\times\{0\}$ by $\partial_+W$ and $\partial W \backslash \partial_+W$ by $\partial_-W$. A compression body is called a \textit{handlebody} if $\partial_-W=\varnothing$. It is said to be \textit{trivial} if $W$ is homeomorphic to $\partial_+ W\times [0,1]$.

Let $M$ be a connected compact oriented $3$-manifold. When $M$ is closed, an embedded connected orientable surface $H$ is a \textit{Heegaard splitting} if $M\backslash H$ has two connected components each diffeomorphic to a handlebody. This notion can be generalized to the case where possibly $\partial M\neq \varnothing$ as follows. Let $(\partial_0M,\partial_1 M)$ be a partition of the boundary components of $M$. A triplet $(W_0,W_1,H)$ is called a \textit{generalized Heegaard splitting} of $(M,\partial_0M,\partial_1M)$ if $W_0$, $W_1$ are compression bodies with 
$$W_0\cup W_1=M, \quad \partial_-W_0=\partial_0M, \quad \partial_-W_1 = \partial_1M$$
 $$\text{ and } W_0\cap W_1 = \partial_+W_0=\partial_+W_1=H.$$ The surface $H$ is then also called a \textit{generalized Heegaard splitting} of $M$. The \textit{Heegaard genus} of $M$ is the lowest possible genus of a generalized Heegaard splitting of $M$. Recall that any triplet $(M, \partial_0M,\partial_1 M)$ as above admits a generalized Heegaard splitting by \cite[Theorem 3.1.10]{SaSchSch}.

Let $S$ be an orientable closed surface embedded in the interior of $M$ as above. An \textit{essential disk} in $(M,S)$ is a disk $D$ embedded in $M$ such that $D\cap S=\partial D$ and $\partial D$ is an embedded non-contractible curve in $S$. 

A generalized Heegaard splitting $(W_0,W_1,H)$ is called \textit{irreducible} when there are no essential disks $(D_0,\partial D_0) \subset (W_0,H)$, $(D_1,\partial D_1) \subset (W_1,H)$ such that $\partial D_0 = \partial D_1$.   The splitting is \textit{reducible} if it is not irreducible.  

We say that $(W_0,W_1,H)$ is \textit{strongly irreducible} if 
there are no essential disks $(D_0,\partial D_0) \subset (W_0,H)$, $(D_1,\partial D_1) \subset (W_1,H)$ such that $\partial D_0 \cap \partial D_1 = \varnothing$.  A splitting is \textit{weakly reducible} if it is not strongly irreducible.  Note that reducible splittings are also weakly reducible. 

A strongly irreducible Heegaard splitting is automatically irreducible.  Indeed, since every pair of essential disks on opposite sides must intersect, if two such disks had the same boundary curve, after small perturbation one could make them disjoint, which contradicts the strong irreducibility condition.  By Haken's lemma, a compact oriented 3-manifold that admits an irreducible Heegaard splitting is irreducible.

Conversely, in non-Haken manifolds it follows from Casson-Gordon \cite{CassonGordon} that irreducible splittings are strongly irreducible.

\subsection{} \textbf{Surgeries on strongly irreducible splittings}
In this section, we describe the possible neck-pinches that may occur beginning with a strongly irreducible Heegaard surface. 

\subsubsection{Neck-pinch surgeries}\label{defneck-pinches}
Let $\Sigma$ be a closed embedded surface in a 3-manifold  $M$.

\begin{defi} \label{def:neck-pinch}
    We say that an embedded surface $\Si'$ arises from $\Sigma$ via a \emph{neck-pinch surgery} along an embedded curve $\gamma\subset \Sigma$  if 
\begin{enumerate}
    \item $\gamma$ bounds a disk $D$  satisfying $D\cap\Sigma = \partial D = \gamma$; 
    \item $\Sigma \backslash  \Si'$ is an annulus that is a tubular neighborhood about $\gamma$ in $\Sigma$; \label{second}
    \item $\Si'\backslash \Sigma$ consists of two disks $D_1$ and $D_2$;
    \item the symmetric difference $\Si'\bigtriangleup \Sigma$ is a $2$-sphere bounding a closed $3$-ball $B$ containing $D$. \label{four}
\end{enumerate}
\end{defi}

We say the neck-pinch surgery along $\gamma$ is \emph{essential} if $\gamma$ does not bound a disk in $\Sigma$.  Otherwise, we call the neck-pinch \emph{inessential}.  If $\Sigma'$ arises as in Definition  \ref{def:neck-pinch} from $\Sigma$ or if $\Sigma'=\phi_1(\Sigma)$, where $\{\phi_t\}_{t\in [0,1]}$ is an isotopy of $M$ (fixing $\partial M$ pointwise) with $\phi_0=\mbox{id}$ let us write 
$$\Sigma'\prec \Sigma.$$

If $(W_0,W_1,\Sigma)$ is a generalized Heegaard splitting of $(M,\partial_0 M,\partial_1 M)$, and $\Sigma'$ arose from a surgery on $\Sigma$ along $\gamma$ \emph{into $W_0$} (resp. \emph{into $W_1$}) if the disk $D$ and ball $B$ in Definition \ref{def:neck-pinch} are contained in $W_0$ (resp. {in $W_1$}). After a neck-pinch surgery has been performed on $\Sigma$ into $W_0$, the resulting surface $\Sigma'$ (possibly disconnected) is two-sided and bounds closed regions \begin{equation}W_0^1:=\overline{W_0\setminus B}\subset W_0 \mbox{ and } W_1^1:=W_1 \cup  B\supset  W_1.\end{equation}

We will also be considering sequences of neck-pinches and isotopies performed successively on a generalized Heegaard surface which we now describe.  

Suppose $(W_0,W_1,\Sigma)$ is a generalized Heegaard splitting of $(M,\partial_0 M,\partial_1 M)$ and
\begin{equation}\label{sequencecompressions}
\Sigma_i\prec \Sigma_{i-1}\prec...\prec\Sigma_1\prec \Sigma_0=\Sigma.
\end{equation}
Then set $W_0^0:=W_0$ and $W_1^0:=W_1$ and for each $k\in\{0,...,i-1\}$ 
we inductively define regions $W^{k+1}_0$ and $W^{k+1}_1$ as follows.  If $\Sigma_{k+1}$ arises from an isotopy $\phi_t$ on $\Sigma_k$, we set
\begin{equation}
W_0^{k+1}:=\phi_1(W_0^{k})\mbox{ and } W_1^{k+1}:=\phi_1(W_1^k).
\end{equation}
If $\Sigma_{k+1}$ arose from $\Sigma_k$ via a neck-pinch surgery, then there are two cases.  We put
\begin{equation}\label{firstoption}
W_0^{k+1}:=W_0^k\cup B_k\mbox{ and } W_1^{k+1}=\overline{W_1^i\setminus B_k}\end{equation} 
if $\Sigma_{k+1}$ arises from $\Sigma_k$ via a surgery along ball $B_k$ (in item 4 in Definition \ref{def:neck-pinch}) contained in $W_0^k$ (in other words, the neck-pinch is \emph{into} $W_0^k$) or else define
\begin{equation}\label{secondoption} W_0^{k+1}:=\overline{W_0^k\setminus  B_k}\mbox{ and } W_1^{k+1}:=W_1^k\cup B_i,\end{equation} if the ball $B_k$ is contained in $W^k_1$.  

With these definitions, for each $k=\{0,...,i-1\}$ the (possibly disconnected) surface $\Sigma_{k+1}$ bounds region $W_0^{k+1}$ on one side and region $W_1^{k+1}$ on the other.  

By slight abuse of terminology given that the domains are changing, we will say the sequence of compressions in the chain \eqref{sequencecompressions} are all \emph{into} the $W_0$ side (resp. into the $W_1$ side) if for each $k\in \{0,...,{i-1}\}$ so that $\Sigma_{k+1}\prec\Sigma_k$ is a neck-pinch, item \eqref{firstoption} (resp. \eqref{secondoption}) holds.  Similarly, we say the \emph{essential compressions are into the $W_0$ side (resp. $W_1$)} if in the sequence of operations in \eqref{sequencecompressions}, those that are essential neck-pinches are all into the $W_0$ side (resp. $W_1$ side).



Throughout the paper we will also use the following basic fact about neck-pinch surgeries
on a compression body along a compressing disk (Remark 3.1.3 item (6) in \cite{SST}):

\begin{lemma} \label{lem: handlebody surgery}
Let $W$ be a handlebody (resp. a compression body) and $D \subset W$ a properly embedded 
disk with $\gamma=\partial D \subset \partial W$. Then, after performing a neck-pinch surgery along $D$, we obtain
a union of handlebodies (resp. compression bodies). 


\end{lemma}

\subsubsection{Tubular neighborhoods} 
Let $(M,g)$ be an oriented, connected, compact Riemannian manifold.
For any subset $A\subset M$ and $h>0$, we will use the following standard notation for its open $h$-neighborhood:
\begin{equation}\label{nh1}
N_h(A) = \{x\in M| \dist_g(x,A) <h\}
\end{equation} 
where $\dist_g$ is the Riemannian distance in $(M,g)$.

Let $\Sigma$ be a surface embedded in $M$. 
Let 
\begin{equation} \label{expon1}
\exp_{\Si}: \Sigma \times \mathbb{R} \rightarrow M
\end{equation}
denote the normal exponential map. For any $\eta>0$ smaller than the injectivity radius of $\Si$, 
$$N_\eta(\Sigma) = \exp_{\Si}(\Sigma \times (-\eta,\eta))$$ 
and it is diffeomorphic to $\Sigma\times [0,1]$ if $\Sigma$ is orientable, or a twisted I-bundle over $\Sigma$ if $\Sigma$ is non-orientable.

If $\Sigma$ is orientable, for any $-\eta<d<\eta$, let us denote the parallel surface
\begin{equation} \label{sd def1}
S_d(\Sigma):= \{\exp_x(d n(x))| x\in\Sigma\}, 
\end{equation}
where $n(x)$ denotes a choice of unit normal along $\Sigma$.  

If $\Sigma$ is non-orientable, then the unit normal $n(x)$ is locally defined, and for any $-\eta<d<\eta$ we set
\begin{equation}\label{sd def2}
S_d(\Sigma):= \{\exp_x(\pm d n(x))| x\in\Sigma\}.
\end{equation}
In this case, $S_d(\Sigma)$ is a connected orientable surface which is a double cover of $\Sigma$ under nearest point projection.

\subsubsection{Surgeries on Heegaard splittings}
We first need the following key lemma due to Scharlemann (Lemma 2.2 in \cite{Scharlemann}):

\begin{lemma}[No Nesting - Lemma 2.2 in \cite{Scharlemann}]\label{nonesting}
Suppose $(W_1,W_2, \Sigma)$ is a strongly irreducible Heegaard splitting of a closed 3-manifold $M_0$ and $F$ is a disk in $M_0$ transverse to $\Sigma$ with $\partial F\subset \Sigma$.  Then $\partial F$ also bounds a disk in some $W_i$.
\end{lemma}

Since the 3-manifold $M_0$ in the statement above is closed,  the two disks in the lemma above are isotopic since such $M_0$ has to be irreducible if it admits a strongly irreducible Heegaard splitting.
Using Lemma \ref{nonesting}, we show that all neck-pinch surgeries performed on a strongly irreducible Heegaard surface must be into the same side:

\begin{lemma}\label{sameside}
Let $(W_0,W_1,\Sigma)$ be a strongly irreducible Heegaard splitting of a closed manifold $M_0$.
Suppose a sequence of neck-pinch surgeries is performed on the surface $\Sigma$.  Then all essential neck-pinch surgeries are either into $W_0$ or into $W_1$.
\end{lemma}

\begin{proof}
To prove the lemma, it is enough to check it when all neck-pinch surgeries we consider are essential.

Suppose $\Sigma'$ is obtained from $\Sigma$ after  a sequence of essential neck-pinch surgeries  along curve $\gamma_1,...,\gamma_k,...,\gamma_K$ into the $W_0$ side for an integer $K \geq 1$. Recall that each neck-pinch surgery removes an annulus region in the surface and glues two new disks $D_{1,k},D_{2,k}$ along the boundary circles. Together, the annulus, $D_{1,k}$ and $D_{2,k}$ bound a 3-ball $B_k$ (see Definition \ref{def:neck-pinch}). 
Let us further assume that the 3-balls $B_k$ are all pairwise disjoint and that after an isotopy, 
$\Sigma'$ bounds a region $\overline{W_0\setminus \bigcup_{k=1}^K B_k}$ on one side and $W_1\cup \bigcup_{k=1}^K B_k$ on the other.

Suppose that the next essential neck-pinch surgery performed on $\Sigma'$ is along $\gamma_{K+1}\subset\Sigma'$. By a slight perturbation of $\gamma_{K+1}$, we may assume that, for each $k=1,...,K$, $\gamma_{K+1}$ is disjoint from both $p_{1,k}\in D_{1,k}$, and $p_{2,k}\in D_{2,k}$, where $D_{1,k}$ and $D_{2,k}$ are the disks that were added in the surgery process at step $k=1,...,K$ (Section \ref{defneck-pinches}) and $p_{i,k}$ is an arbitrary point in $D_{i,k}$ for $i=1,2$.


For each $k=1,...,K$, using expansions based at $p_{1,k}$ and $p_{2,k}$, we may push $\gamma_{K+1}$ off of $D_{1,k}\cup D_{2,k}$ and thereby obtain an isotopic curve $\tilde{\gamma}_{K+1}\subset \Sigma$ which still bounds a disk $F$ in $M$, but which is disjoint from the annuli of the neck-pinch surgeries (see (2) in Definition \ref{def:neck-pinch}).  By Lemma \ref{nonesting}, it follows after applying an isotopy that fixes the boundary $\partial F$ that the disk $F$ is contained entirely in $W_0$ or $W_1$.  Suppose $F$ is contained in $W_1$.   
Since $\tilde{\gamma}_{K+1}$ does not touch any of the 3-balls of the neck-pinch surgeries (see (4) in Definition \ref{def:neck-pinch}),
it follows that $\partial F = \tilde{\gamma}_{K+1}$ is disjoint from $\gamma_1,...,\gamma_K$.
Since $\gamma_1$ bounds a disk in $W_0$, 
this contradicts the strong irreducibility of $\Sigma$.  
Thus $F$ is contained in $W_0$ and by an innermost disk argument, we can ensure it is also contained in $\overline{W_0\setminus \bigcup_{k=1}^K B_k}$. 
This implies that the neck-pinch surgery along $\gamma_2$ is into the $W_0$ side as desired. Moreover, after the surgery along $\tilde{\gamma}_{K+1}$, the corresponding 3-ball $B_{K+1}$ (see (4) in Definition \ref{def:neck-pinch}) can be made to not intersect the other 3-balls $B_k$ ($k=1,...,K$). So after an isotopy, the surface $\Sigma''$ obtained after doing neck-pinch surgery along $\tilde{\gamma}_{K+1}$ bounds a region $\overline{W_0\setminus \bigcup_{k=1}^{K+1} B_k}$ on one side and $W_1\cup \bigcup_{k=1}^{K+1} B_k$ on the other.

By an inductive argument, this finishes the proof.

 \end{proof}

\begin{prop}\label{small volume handlebody}
Let $(M_0,g)$ be a closed Riemannian 3-manifold, and let $M\subset M_0$ be a compact 3-manifold. Then there exists $\beta(M,g)>0$ such that the following holds. Let $\Sigma$ be a closed connected surface in $M$,  obtained after finitely many neck-pinch surgeries on a strongly irreducible Heegaard splitting $H$ of $M_0$ and after discarding some connected components.
If
$$\mbox{Area}(\Sigma)<\beta(M,g),$$ 
then  $\Sigma = \partial K$ where $K$ is a handlebody in $M$ of volume at most $\beta(M,g)$.
\end{prop}

\begin{proof}
Before starting the proof, we recall a standard fact: small surfaces bound small volume regions
(Lemma 1 in \cite{MSY}): 
there are constants $\mu(M,g)>0$ and $c(M,g)>0$ such that if $S$ is a closed embedded surface in $M$ with $\mbox{Area}(S)<\mu(M,g)$, then there is a unique compact domain $K$ contained in $M$ such that
$$S=\partial K \text{ and } \Vol(K)\leq c(M,g) \mbox{Area}(S)^{3/2}.$$

Thus, if $\beta(M,g)$ is a small enough constant, if $\Sigma$ is a surface as in the statement, then $\Sigma=\partial K$ where $K$ is a domain in $M$ with volume at most $\beta(M,g)$. It only remains to show that $K$ is a handlebody.

Consider a triangulation $T$ of $M$.  
If $\beta(M,g)$ is small enough depending only on $(M,g)$, using the coarea formula, we can deform slightly $T$ in a way that $K$ becomes disjoint from the vertices and edges of $T$ (namely its $1$-skeleton), and $\Sigma$ intersects the $2$-dimensional faces of $T$ transversally along embedded closed curves $c_1,...,c_k$. For any curve $c_j$ that is not essential in $\Sigma$, we can deform $\Sigma$ with an isotopy in $M$ so that the newly obtained surface still bounds a region $K$ which remains disjoint from the 1-skeleton of $T$, and the curve components of $T\cap \Sigma$ are now a subset of $\{c_1,...,c_{j-1},c_{j+1},...,c_k\}$. Repeating this finitely many times, we end up with a surface called $\Sigma'$, which bounds a region called $K'$ disjoint from the 1-skeleton of $T$, and $\Sigma'$ intersects the 2-dimensional faces of $T$ transversally along curves which are all essential in $\Sigma'$. 
Each $2$-dimensional face $F$ of $T$ intersects $\Sigma'$ along  curves $\gamma_{F,1},...,\gamma_{F,I_F}$ bounding disks $D_{F,1},...D_{F,I_F}\subset F$ (such a family of disks could be empty).
By Lemma \ref{sameside}, all essential surgeries occur on the same side of the Heegaard splitting, so the disks $D_{F,1},...D_{F,I_F} $ are all \emph{disjoint}. Indeed, if one disk $D_{F,i_1}$ is contained in another $D_{F,i_2}$, we could perform essential surgeries along all the disks but $D_{F,i_1}$ and $D_{F,i_2}$ lie on different sides of the Heegaard splitting, a contradiction. 
Any such disk $D_{F,i}\subset F\cap \Sigma'$, if it exists, is necessarily contained in  $K'$ because $K'$ does not touch the $1$-skeleton of $T$. In other words, $K'$ is on the same side as all the essential neck-pinch surgeries (if any occurred).

By Lemma \ref{lem: handlebody surgery}, $K'$ is a handlebody. Since $K'$ is isotopic to  $K$, $K$ is a handlebody as desired.

\end{proof}

Before we proceed, let us recall fundamental results of Scharlemann-Thompson (Theorem 2.11 in \cite{ScharlemannThompson}) and Heath (Corollary 1.6 in \cite{Heath}) on the classification up to isotopy of irreducible Heegaard splittings of $\Sigma\times I$ or the nontrivial $I$-bundle over $\Sigma$, where $\Sigma$ is a closed surface.

\begin{prop} (Theorem 2.11 in \cite{ScharlemannThompson})\label{st}
Let $\Sigma$ be an orientable surface of genus at least $1$ and let $I=[0,1]$.  An irreducible Heegaard splitting of $M=\Sigma\times I$ endowed with a Riemannian metric $g$ is isotopic to one of the following:
\begin{enumerate}
\item $\Sigma\times\{1/2\}$;
\item  for small $\varepsilon>0$, $\partial\big(N_\varepsilon(\Sigma\times \{1/2\})\setminus \{D\times [0,1]\}\big)$, where $D$ is a closed disk in $\Sigma$.
\end{enumerate}
\end{prop}

We also need the corresponding result of Heath for the non-orientable case.  For a non-orientable surface $\Sigma$, denote by $M_\Sigma$ the non-trivial $I$-bundle over $\Sigma$.  Thus $\partial M_\Sigma$ is an orientable surface that is a double cover of $\Sigma$.
\
\begin{prop} [Corollary 1.6 in \cite{Heath}]\label{heath}
Let $\Sigma$ be a non-orientable surface, and let $M_\Sigma$ be the twisted $I$-bundle over $\Sigma$. 
An irreducible Heegaard splitting of $M_\Sigma$ is isotopic to 
$\partial M_\Sigma$
with a single vertical handle attached.

\end{prop}


Recall that the notation $S_d(\Sigma)$ was introduced in (\ref{sd def1}) and (\ref{sd def2}).
The following proposition shows how a strongly irreducible splitting may degenerate after neck-pinch surgeries:
 \begin{prop}[Surgeries on strongly irreducible Heegaard surfaces]\label{strongirreduce}

Let $\G$ be a strongly irreducible Heegaard surface in a closed, oriented 3-manifold $M_0$ endowed with any Riemannian metric $g$.  Suppose that, after finitely many neck-pinch surgeries performed on $\G$ and possibly discarding some connected components, we obtain an embedded surface $\G'$ so that there exists finitely many closed embedded surfaces $\Si_1, ...,\Si_k$, and positive integers $n_1, ..., n_k$ as well as small real numbers $\{d_{ij}\}$ 
 (where $i\in\{1,2,...,k\}$, and for each fixed $i$, $j$ varies from $1$ to $n_i$) so that $\G'$ is isotopic to 
 \begin{equation}
 \bigcup_{i=1}^k \cup_{j=1}^{n_i} S_{d_{ij}}(\Si_i).
 \end{equation}
 \noindent
  Then, if $M_0$ is not diffeomorphic to $\mathbb{RP}^3$, the following statements hold:
 \begin{enumerate}
 \item If for some $i\in\{1,...,k\}$ the surface $\Si_i$ is non-orientable then $n_i=1$ and $\Si_l$ is a 2-sphere for any $l\neq i$.  Moreover, $\G$ is isotopic to 
 the boundary of a tubular neighborhood of  $\Si_i$ with a vertical handle attached
 and $M_0\setminus \Si_i$ is a handlebody of genus one less than that of $\Gamma$.
 
 \label{first_surgeries}
 \item For any $i$, if $\Sigma_i$ is orientable and of positive genus, then $n_i=1$. \label{second_surgeries}

 \end{enumerate}
 If $M_0$ is diffeomorphic to $\mathbb{RP}^3$ the same statements hold except for the assertion in (1) that $n_i=1$.

  \end{prop}
 
 
 \begin{proof}

 Suppose that $M$ is not diffeomorphic to $\mathbb{RP}^3$. Let us first prove statement \eqref{second_surgeries}.  Thus we suppose toward a contradiction that $\Si_1$ is orientable and $n_1>1$.  In a tubular neighborhood about $\Si_1$, the surface $\G'$ consists of $n_1$ parallel ordered sheets each isotopic to $\Si_1$.  Let us denote these components by $\Phi_1,...,\Phi_k$. 
The sum of the genus of those surfaces $\Phi_1,...,\Phi_k$ is at most the genus of $\G$ since $\G'$ was obtained from $\G$ by neck-pinch surgeries.
In particular, in obtaining the surface $\G'$ from $\G$, some of the surgeries must have been essential surgeries.  By Lemma \ref{sameside}, we can suppose all essential surgeries have been performed into one side of $\Gamma$, say the handlebody $H_1$, while non-essential surgeries can be performed on either side.  By Lemma \ref{lem: handlebody surgery} a sequence of neck-pinch surgeries divides $H_1$ into handlebodies (minus some balls) $H''_1, ..., H''_l$ whose total genus is less than the genus of $\Gamma$, from which a (potentially empty) union of balls have been removed.

Let us first rule out the case $n_1>2$. In this case, there would be at least three consecutive sheets $\Phi_1, \Phi_2$, and $\Phi_3$.  Let $R_1$ be the open set bounded between $\Phi_1$ and $\Phi_2$, and let $R_2$ be the open set bounded between $\Phi_2$ and $\Phi_3$.  Thus either $R_1$ or $R_2$ must be among the collection $\{H''_1,...,H''_k\}$.  This is a contradiction as, for each $i=1,2$,  $R_i$ is diffeomorphic to $\Si_1\times (0,1)$ which is not a handlebody (minus some balls).

It remains to rule out the case where $n_1=2$.  Without loss of generality, assume that $\Phi_1$ and $\Phi_2$ are on different sides of $\Si_1$.  In this case, the open set \begin{equation}M_0\setminus (\Phi_1\cup\Phi_2)\end{equation} consists of three components: two handlebodies and the region $R_1$ between the two sheets $\Phi_1$ and $\Phi_2$, which is diffeomorphic to $\Si_1\times (0,1)$.  After renumbering, $\Phi_1 \subset \partial H''_1$ and $\Phi_2 \subset \partial H''_2$.  Thus  $\Gamma$ is constructed (up to isotopy) from the disconnected set $\Phi_1\cup\Phi_2$ by attaching $1$-handles contained in $R_1$ with boundary circles in $\partial H''_1$ and $\partial H''_2$, since the essential compressions have all been into the $H_1$ side by assumption.

We identify $R_1$ with $\Sigma_1\times(0,1)$. 
It follows from the classification of irreducible Heegaard surfaces in manifolds diffeomorphic to $S\times [0,1]$, where $S$ is a closed orientable surface (Proposition \ref{st}) that the irreducibile Heegaard surface $\Gamma$ is obtained from $\partial H''_1$ and $\partial H''_2$  by attaching a single $1$-handle along a vertical arc joining $\partial H''_1$ and $\partial H''_2$.  Thus the Heegaard surface $\Gamma$ bounds on one side the handlebody 
\begin{equation}
H_2:=(\Sigma_1\setminus D)\times [0,1], 
\end{equation}
where $D$ is a small disk in $\Sigma_1$, and on the other side, the handlebody 
\begin{equation}
H_1= H''_1\cup H''_2 \cup V,
\end{equation}
where $V$ is the closure of $D\times (0,1)$.

We now exhibit two disjoint curves $\gamma_1$ and $\gamma_2$ supported on $\G$ so that $\gamma_1$ bounds an essential disk in $H_1$ and $\gamma_2$ bounds an essential disk in $H_2$, which contradicts strong irreducibility.  

Let us consider the set $(\Sigma_1 \setminus D)\times [0,1]$ and suppose the genus of $\Sigma_1$ is $h$.  We can express the surface with boundary $\Sigma_1 \setminus D$ as a regular $4h$-gon $\mathcal{P}$ with a disk $D$ removed from the center, and where the opposite sides of $\mathcal{P}$ are identified by an equivalence relation $R$ in the usual way.   Thus we write
\begin{equation}
H_2=((\mathcal{P}\setminus D)\times [0,1]) /\sim, 
\end{equation}
where $(x,t)\sim(y,s)$ if $t=s$ and $xRy$.  

 First, define $\gamma'$ to be a curve on $\mathcal{P}\setminus D$ with the property that 
 $$\gamma_1:=(\gamma'\times 0)/\sim$$ is an essential curve in $\G=\partial H_1$ bounding an embedded disk in  $H_1''\subset H_1$. 
 
Let $q \in \gamma'$ be the closest point on $\gamma'$ to 
 $D \subset \mathcal{P}$. We remove a tiny segment $\alpha' \subset \gamma'$ containing the point $q$, with endpoints $ \partial \alpha' =\{p_1,p_2\}$. Let $\gamma_2$ be a closed curve obtained by 
 connecting $(\gamma' \setminus \alpha', 0)/\sim $ and 
 $(\gamma' \setminus \alpha', 1)/\sim $ by disjoint embedded curves $\beta_1$ (with $\partial \beta_1  = \{(p_1, 0), (p_1, 1)) \}$) and $\beta_2$ (with $\partial \beta_2  = \{(p_2, 0), (p_2, 1)) \}$) that pass through the ``neck" $\partial D\times [0,1]$ and that only intersect $\gamma_1$ at the endpoints. Observe that by orientability of $\Sigma_1$ we can perturb $\gamma_1$ to one side, and hence, by construction, we can also perturb $\gamma_2$ so that it is disjoint from $\gamma_1$.
Moreover,  $\gamma_1$ bounds an essential disk in $H_1$ and $\gamma_2$ bounds an essential disk in $H_2$ (this disk looks like a thin band trapped between $\Sigma_1\times \{0\}$ and $\Sigma_1\times \{1\}$).  Thus, $\Gamma$ is not strongly irreducible, which gives a contradiction.  We conclude that $n_1=1$.  This completes the proof of statement \eqref{second_surgeries}. 
 
We now address statement \eqref{first_surgeries}.  Suppose $\Si_1$ is non-orientable. Thus, inside a tubular neighborhood of $\Si_1$, the surface $\G'$ consists of a collection of parallel orientable surfaces $\Phi_1,...,\Phi_k$ (with $k=n_1/2$), each of which is isotopic to a double cover of $\Si_1$.  The components of \begin{equation}M_0\setminus \{\Phi_1,...,\Phi_k\}\end{equation} are the following $k+1$ sets:
\begin{itemize}
    \item The twisted interval bundle $R_0$ bounded by $\Phi_1$ about $\Si_1$.
    \item  The sets $\{R_i\}_{i=1}^{k-1}$, where $R_i$ is the region between $\Phi_i$ and $\Phi_{i+1}$.  Each $R_i$ is diffeomorphic to $\tilde{\Si}_1\times [0,1]$ (where $\tilde{\Si}_1$ is an orientable double cover of $\Si)$.
    \item The region $R_k$ comprising the component of $M\setminus\Phi_k$ not containing $\Si_1$.
\end{itemize}
Again by Lemma \ref{sameside}, all essential compressions that were performed to obtain $\G'$ from $\G$ are into the same side $H_1$ of $\Sigma$, while inessential surgeries that split off a 2-sphere component may occur to either side.  If $M_0$ is not diffeomorphic to $\mathbb{RP}^3$, $M_0$ admits no embedded projective planes, and thus $\tilde{\Si}_1$ is \emph{not} a 2-sphere.  It follows that no region among the collection $\{R_0,...,R_k\}$ is diffeomorphic to $\mathbb{S}^2\times [0,1]$ so none of them is a handlebody (minus some balls). Thus we conclude $k=1$ and $n_1=2$ since, by Lemma \ref{lem: handlebody surgery}, the essential neck-pinch surgeries divide $H_1$ into a collection of handlebodies.

It follows that $\G'$ restricted to a tubular neighborhood of $\Si_1$ consists of one connected surface $\Phi_1$ that is a double cover of $\Si_1$ via nearest point projection. Note that $M_0\setminus\Phi_1$ consists of two components, a handlebody and a twisted interval bundle. It follows from Proposition \ref{heath} that the only way one can obtain from $\Phi_1$ an irreducible Heegaard surface is by attaching a single vertical $1$-handle to $\Phi_1$.  This completes the proof when $\Sigma_1$ is non-orientable and $M_0$ is not diffeomorphic to $\mathbb{RP}^3$.

Finally, suppose that $M_0$ is diffeomorphic to $\mathbb{RP}^3$, then Bonahon-Otal (Theorem 1 in \cite{BO}) showed that there is a unique strongly irreducible Heegaard splitting, which has genus $1$.  Moreover, the genus $1$ splitting can be obtained by adding a vertical $1$-handle to the boundary of a tubular neighborhood about any embedded $\mathbb{RP}^2$, thus giving \eqref{first_surgeries} in this case as well, except that we could have 
$n_1>2$ because of attached 2-spheres.

\end{proof}

The following corollary asserts, roughly speaking, that after surgeries on a strongly irreducible surface, we cannot obtain a surface enclosed by another such surface unless the inner surface is a sphere:


\begin{coro} \label{finding_neck}
Let $M_0$ be a closed, oriented $3$-manifold and let $\Si\subset M_0$ be a strongly irreducible Heegaard surface. Let $M\subset M_0$ be a compact three-manifold with boundary with generalized Heegaard splitting $(W_0,W_1,\G)$ so that $\Gamma$ is isotopic to $\Si$ in $ M_0$.  Suppose that after performing finitely many neck-pinch surgeries on $\G$ and after possibly discarding some components, we obtain $S\cup T$, where $S$ and $T$ are disjoint closed connected surfaces embedded in $M$ and $T$ might be empty.

Then $S$ is the positive boundary of a compression body in $M$ with negative boundary in $\partial M$. Similarly, if $T\neq\varnothing$ then $T$ is the positive boundary of a compression body in $M$ with negative boundary in $\partial M$. 

Suppose that $W_S$ is any compression body in $M$ with $\partial_+W_S=S$ and $\partial_-W_S\subset \partial M$, and if $T\neq\varnothing$, that $W_T$ is any compression body in $M$ with $\partial_+W_T=T$ and  $\partial_-W_T\subset \partial M$.  Then the following items hold:
\begin{enumerate}
\item \label{firstnonesting}  
Suppose that $ M_0$ is not a 3-sphere and that $T\neq \varnothing$. 
If $S$ and $T$ both have positive genus, then $W_S\cap W_T=\varnothing$. 
If $W_S\cap W_T \neq \varnothing$ then $W_T \subset W_S$ (or $W_S \subset W_T$) and $W_T$ (resp. $W_S$) is diffeomorphic to a 3-ball minus a possibly empty union of 3-balls.

\item  \label{secondnonesting} 

Suppose that $S\cup T$ was obtained from some connected surface $V$ after one additional neck-pinch surgery (and $V$ was obtained after a sequence of neck-pinch surgeries on $\Gamma$).
If $T\neq \varnothing$ and $W_S\cap W_T =\varnothing$, then $V$ bounds a compression body $W_V$ with $\partial_-W_V =\partial_-W_S \cup \partial_-W_T$.  If $T\neq \varnothing$ and 
 $W_T \subset W_S$, then $V$ bounds a compression body $W_V$ with $\partial_- W_V=\partial_-W_S $. 
 If $T=\varnothing$, then $V$ also bounds a compression body $W_V$ with $ \partial_- W_V=\partial_-W_S$.

\end{enumerate}


    
\end{coro}
\begin{proof}


By Lemma \ref{sameside}, essential neck-pinch surgeries performed on $\G$ are into one fixed side of $\G$. Moreover, by Lemma \ref{lem: handlebody surgery}, after each neck-pinch surgery, one obtains a (possibly empty) union of positive genus surfaces bounding disjoint compression bodies $\{W_1,...,W_k\}$ together with some 2-spheres (the two-sphere bound some $3$-balls which may not be disjoint and may be contained in  compression bodies $W_i$). Hence, if $S$ and $T$ both have positive genus, then they bound disjoint compression bodies $W_i$, $W_j$ among the $\{W_1,...,W_k\}$ where $S=\partial_+W_i$ and $T=\partial_+W_j$. 

 We now prove item (1) assuming first that both $S$ and $T$ have positive genus. In particular, the genus of $S$ or $T$ is strictly less than that of $\G$. Assume that $W_S\subset M$ is a compression body with $\partial_+W_S=S$ and $W_T\subset M$ is a compression body with $\partial_+W_T=T$.  
 
We can assume without loss of generality that either $W_S\neq W_i$ or $W_T\neq W_j$ (otherwise, $W_S\cap W_T$ is empty and the statement follows). Thus assume $W_S\neq W_i$.   As $S$ was obtained after at least one essential neck-pinch surgery on a strongly irreducible Heegaard splitting, the surface $S=\partial_+W_S$ is incompressible in $M\setminus W_i$ by Lemma \ref{sameside}. But the positive boundary of a compression body is incompressible if and only if the compression body is trivial. In other words, the compression body $W_S$ is diffeomorphic to $S\times [-1,1]\setminus \cup_{i=1} B_i^k$ (where $\{B_1,..B_k\}$  a possibly empty collection of 3-balls). Moreover, one component $\{-1\}\times S$ of $\partial W_S$ coincides with $S$ and another component $\{1\}\times S$ is contained in  $\partial M$ and $T\subset W_S\cong S\times [-1,1]\setminus\cup_{i=1}^k B_i$. Adding back the handles and discarded components successively to $S$ and $T$ we obtain a connected surface $\Sigma'$ isotopic to $\Sigma$ but contained in the product region $W_S$.  By Proposition \ref{st}, we get that $\Sigma'$ is either isotopic to $S$ (in contradiction to the fact that the genus of $S$ is less than $\Sigma$), or else to parallel copies of $S$ with a vertical neck joining them.  This is excluded by item (\ref{second_surgeries}) in Proposition \ref{strongirreduce} as $\Sigma'$ is strongly irreducible in a closed 3-manifold.  This completes the case where both $S$ nor $T$ have positive genus, establishing that $W_S\cap W_T=\varnothing$ as desired.


 Suppose now that the genus of $T$ is zero.  Thus $W_T$ is a $3$-ball (minus potentially some balls) with its negative boundary contained in $\partial M$ and equal to a union of two-spheres. If $S\subset W_T$, by a result of Scharlemann \cite[Theorem 2.1]{Scharlemann}, it follows that the genus of $S$ is equal to zero.  Thus in this case $W_S\subset W_T$ (otherwise $M_0 $ is diffeomorphic to $\mathbb{S}^3$ in violation of our assumption).  Suppose $T\subset W_S$. If the genus of $S$ is positive, then by the result of \cite{Scharlemann} again we obtain $W_T\subset W_S$. If $S$ is a two-sphere, we also have that $W_T\subset W_S$ since $M_0$ is not diffeomorphic to $\mathbb{S}^3$.
 This completes the proof of item (\ref{firstnonesting}).
 
Item (\ref{secondnonesting}) follows from item (\ref{firstnonesting}) and Lemma  \ref{lem: handlebody surgery} (recall that if $T\neq \varnothing$ and 
 $W_T \subset W_S$ then by item (\ref{firstnonesting}), $W_T$ is a 3-ball).

 \end{proof}

\vspace{2em}

\section{Interpolation: statement and outline of proof}\label{zero inter}

In this section, we introduce a smooth interpolation result that will be important in the proof of the local min-max theorem (cf. Theorem \ref{smoothminmax} in Section \ref{prelim}), which in turn will imply the main result, Theorem \ref{introthm}.  The interpolation theorem enables us to deform an embedded surface close in the flat topology to a stable minimal surface with multiplicity to a certain canonical position. 

Recall that $N_h(.)$ denotes the $h$-neighborhood (see (\ref{nh1})). The notion of strongly irreducible generalized Heegaard splitting is defined in Section \ref{topprelim}. 
We can now state the smooth interpolation result.




\begin{thm}[Interpolation] \label{maininterpolation}
Let $( M_0,g)$ be a closed oriented Riemannian $3$-manifold, and let $\overline{\G}\subset M_0$ be a strongly irreducible Heegaard surface. 
Let $M\subset M_0$ be a compact three-manifold with partitioned boundary $(\partial_0 M, \partial_1 M)$ and 
let $(W_0,W_1,\G)$ be a generalized Heegaard splitting of $(M,\partial_0 M, \partial_1 M)$ 
so that $\G$ is isotopic to $\overline{\G}$ in $ M_0$. Suppose further that each component of $\partial M$ is a strictly stable minimal surface.  

There exist $h, \varepsilon >0$,
such that the following
holds. 
If the surface $ \Gamma$ satisfies
$$\mbox{Area} (\Gamma \setminus N_{h}(\partial M))< \varepsilon,$$
then for every $\delta>0$, there exists a closed surface  $\Si'$ that coincides with a collection of components of $\partial M$, and
there exists an isotopy $\{\Gamma_t\}_{t\in[0,1]}$, such that:
\begin{enumerate}
\item $\Gamma_0=\Gamma$
\item $\mbox{Area}(\Gamma_t) < \mbox{Area}(\Gamma) + \delta$
\item $\mathbf{F}(\Gamma_1,\Sigma')\leq\delta$. 

\end{enumerate}
\end{thm}

Here, $\mathbf{F}$ denotes the varifold metric
as defined in \cite[page 66]{P}.
It will also be convenient to use the following notation.
Given an embedded surface $S\subset M$   we will use 
 $|S|$ to denote the varifold induced by $S$
 and for a varifold $V$ we will use
 $||V||$ to denote the measure on $M$ induced by $V$. Given a map $F: M \to M$ we will use $F_\sharp(V)$
 to denote the push-forward of varifold $V$ by $F$
 (see \cite[2.2]{C&DL}).

\subsection{The case of connected stable minimal boundary.}

In the case where $M$ is a 3-ball with boundary a strictly stable minimal 2-sphere $\Sigma$, Theorem \ref{maininterpolation} can be thought of as a quantitative form of Alexander's Theorem or a combinatorial version of mean curvature flow performed ``by hand."


It follows from Alexander's theorem that any embedded 2-sphere in $N_\varepsilon(\Sigma)\cong \mathbb{S}^2\times [0,1]$ can be isotoped to either a round point or else to $\Sigma$ itself.  The difficulty is to obtain such an isotopy obeying the area constraint (2).  The reason that the $\delta$-constraint is important is that (as explained in the sketch in the Introduction) we will be gluing this interpolating isotopy into sweep-outs with maximal area approaching the width $W$ and we want the maximal area of the resulting sweepout to still be close to $W$.

It is instructive to consider the analogous question in $\mathbb{R}^3$ to that addressed in Theorem \ref{maininterpolation}. Suppose one is given two embeddings $\G_0$ and $\G_1$ of 2-spheres into $\mathbb{R}^3$.   We can ask whether for any $\delta>0$ there exists
an isotopy $\G_t$ from $\G_0$ to $\G_1$ obeying the constraint (assuming $\mbox{Area}(\G_1) \geq \mbox{Area}(\G_0)$):
\begin{equation}
\mbox{Area}(\G_t)\leq \mbox{Area}(\G_1)+\delta \mbox{ for all } t.
\end{equation}

It is easy to see that the answer is ``yes". 
Namely, one can even do better and find an isotopy satisfying \begin{equation} \mbox{Area}(\G_t)\leq \mbox{Area}(\G_1) \mbox{ for all } t. \end{equation}

To see this, one can first enclose $\G_0$ and $\G_1$ in a large ball about the origin $B_R$.   By Alexander's theorem there is an isotopy $\phi_t$ between $\G_0$ and $\G_1$. Suppose that this isotopy increases area by a factor at most $L^2$ along the way, for some $L>0$.   First shrink $B_R$ into $B_{R/L}$, then perform the shrunken isotopy $(1/L)\phi_t$ on $B_{R/L}$, and then rescale back to unit size.

Of course, in $3$-manifolds that we must deal with in Theorem \ref{maininterpolation} 
one does not have good global radial isotopies to exploit.  However, the same idea of shrinking still applies if we first work locally in small balls to ``straighten" our surface.  We can also use a certain squeezing map (see Section \ref{first inter}) to repeatedly press our surface closer to $\Sigma$ in the $\mathbf{F}$-topology while only decreasing area.  

Let us explain the ideas in our proof of Theorem \ref{maininterpolation} in more detail.
There are two main steps.  In the first step, we introduce
a local area-nonincreasing deformation process in a collection of small balls covering the boundary $\Sigma$.  The end result of this process is to produce a surface isotopic to $\Gamma$ and consisting of $k$ parallel  sheets graphical over $\Sigma$ that are joined by a thin set of potentially knotted and linked tubes.   

The local deformation we introduce exploits the fact that we can use the Shrinking Isotopies to ``straighten" the surface in small balls while obeying the area constraint (a similar idea was used by Colding-De Lellis \cite{C&DL} in proving the regularity of $1/j$-minimizing sequences).   Our deformation process is a kind of discrete area minimizing procedure, somewhat akin to Birkhoff's curve shortening process.  The process ``opens up" any 
 folds or unknotted necks that are contained in a single ball. However,
 at this stage we can not open necks like on Figure \ref{fig:essential}.

After the first stage of the process, we are left with $k$ parallel graphical sheets arranged about $\Sigma$ joined by potentially very complicated necks.  If $k$ is $1$ or $0$, the proposition is proved. If not, the second step is to use a global deformation to deform the surface through sliding of necks to one in which two parallel sheets are joined by a neck contained in a single ball.  Then we go back to Step 1 to open these necks.  After iterating, eventually $k$ is  $1$ or $0$.

In order to find the neck that can be ``opened up"  we use the Light bulb theorem in topology.
The version of the Light bulb theorem that will be most useful to us
is the following
(see Theorem \ref{general}).
Given a 3-manifold $M$ and two arcs, $\alpha$ and $\beta$, with boundary
points in $\partial M$ assume that one of the boundary points of $\alpha$
lies in the boundary component of $M$ diffeomorphic to a sphere. 
Then $\alpha$ and $\beta$ are isotopic as free boundary curves
if and only if they are homotopic as free boundary curves.
We will apply this theorem to a curve $\alpha$ with the property that
a portion of our surface $\G$ (the ``neck")
lies in the boundary of a small tubular neighborhood of $\alpha$.

\begin{figure}
   \centering	
	\includegraphics[scale=0.75]{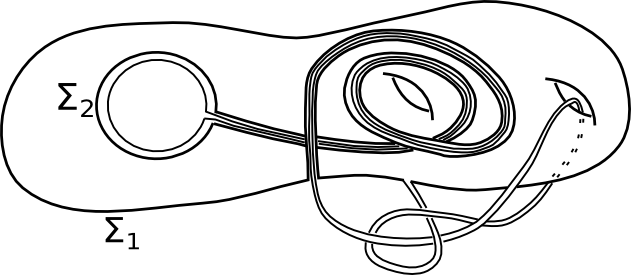}
	\caption{The surface $\G$ is within $\varepsilon$
	(in varifold norm) from $\Si_1 +2 \Si_2$, where $\Si_1$ is a stable
	minimal surface of genus 2 and $\Si_2$ is a stable minimal sphere. We can isotope
	$\G$ to $\Si_1$ while increasing its area by an arbitrarily small amount.}
	\label{fig:knotted}
\end{figure}

\subsection{The case of multiple connected components.}
In Theorem \ref{maininterpolation} we need to consider the case that $\G$ clusters 
around a minimal surface $\Si$ that has multiple connected components.
This is illustrated in Figure \ref{fig:knotted}.
The surface $\G$ is mostly contained in the tubular neighborhood
of minimal surfaces $\Si_1$ and $\Si_2$, while the part of
$\G$ outside of $N_h(\Si_1 \cup \Si_2)$ consists of a thin
set that can link with each other and knot around
handles of $\Si_1$.

After the surface has been deformed into a canonical form in the neighborhood
of each connected component $\Si_i$, we need a global argument, showing that one can always find 
a neck that can be unknotted, using the Generalized Light Bulb Theorem, and slid
into the neighborhood of one of the boundary components $\Si_i$. This process terminates
only when for each $i$ the surface $\G$ either avoids the neighborhood of $\Si_i$
or looks like a single copy of $\Si_i$ with thin necks attached.


\vspace{2em}

\section{Interpolation I: deformation to a stacked surface} \label{first inter}
In this section, let $(M,g)$ be a compact Riemannian 3-manifold with smooth boundary.

\subsection{Isotopy with surgeries.}
To describe our interpolation procedure
it will be convenient to introduce the notion
of isotopies with surgeries.
Recall Definition \ref{def:neck-pinch}
of a neck-pinch surgery. 
It will also be convenient to define a surgery that discards a connected component that is contained in a small ball. We will call this a \emph{collapse surgery}.

\begin{defi} \label{def:collapse}
We will say that an embedded surface $\G'$ in $M$ is obtained
from $\G$ by a collapse surgery if 
there exists an open set $U \subset M$ diffeomorphic to a 3-ball such that 
$\G\cap \partial U=\varnothing$ and 
$\G'= \G \setminus U$.
 \end{defi}

We will define a class of deformations which allows finitely many neck-pinch and collapse surgeries and is continuous in the $\mathbf{F}$-topology.

\begin{defi} \label{def:collapse1}
A family $\{\G_t \}_{t \in [0,1]}$ of smooth closed embedded surfaces is an
\emph{isotopy with $k$ surgeries} in $M$
if there exists a subdivision
$0 =t_0 < t_1 < \dots < t_{k+1} =1$ of $[0,1]$
into $k$ subintervals so that 
for each $[t_i, t_{i+1}]$ the following holds:
\begin{enumerate}
    \item $\{ \G_t \}_{[t_i, t_{i+1})}$ is a smooth isotopy;
    \item $\lim_{t \rightarrow t_{i+1}}\mathbf{F}(\G_t, \G_{t_{i+1}})=0$;
    \item for every $\epsilon>0$ if $t<t_{i+1}$
    is sufficiently close to $t_{i+1}$, then
    $\G_{t_{i+1}}$ is obtained from $\G_t$
    by a neck-pinch or collapse surgery.
\end{enumerate}
\end{defi}

We have the following useful lemma that relates
isotopies with surgeries to isotopies in $M$.

\begin{lemma} \label{neck-pinching isotopy}
Let $\{ \G_t \}_{t=0} ^1$ be an 
isotopy with surgeries. 
For every $\delta>0$ there exists
an isotopy $\{\G_t'\}_{t=0} ^1$ of surfaces in $M$ with
\begin{enumerate}
\item $\G_0' = \G_0$, 
\item $\mbox{Area}(\G_t ' ) < \mbox{Area}(\G_t) + \delta$ for all $t\in [0,1]$,
\item $\mathbf{F}(\G_1, \G_1') < \delta$. 
\end{enumerate}
\end{lemma}

\begin{proof}
The proof is by induction 
on the number of surgeries $k$.
Suppose the statement of the theorem
is correct for $k-1 \geq 0$ surgeries.
Let $\{ \G_t \}_{t=0} ^1$ be a family
with $k$ surgeries at times $t_1<...<t_k$. 

By the inductive assumption there exists
an isotopy $\{ \G_t'' \}_{t \in [t_1,1]}$,
such that $\G_{t_1}''= \G_{t_1}$, 
$\mathbf{F}(\G_1, \G_1'') < \delta/2$
and for all $t \in [t_1,1]$,
$$\mbox{Area}(\G_t '' ) < \mbox{Area}(\G_t) + \delta/2.$$ 
By the ambient isotopy theorem we can extend the isotopy
$\{\G_t'' \}_{t \in [t_1,1]}$ to an ambient isotopy
$\Psi_t: M \rightarrow M$.
For any $\eta>0$ choosing
$\tilde t < t_1$ sufficiently close to $t_1$
we have, by continuity in the $\mathbf{F}$
metric, that $\mathbf{F}(\Psi_t(\G_{\tilde t}), \G_t'') < \eta$
for all $t \in [t_1,1]$.

Choosing $\eta$ sufficiently small and setting
$\G_t' = \G_{t\tilde t \over t_1}$ for $t\leq t_1$
and $\G_t' = \Psi_t(\G_{\tilde t})$ for $t> t_1$
we obtain the desired isotopy.
\end{proof}

In the following lemma, we show that one can perform a neck-pinch surgery on a surface $\G$
using an intersection with a different surface. Recall that $N_h(.)$ denotes the $h$-tubular neighborhood of an embedded surface, see (\ref{nh1}).

\begin{lemma} \label{intersection neck-pinch}
Let $S \subset M$ be an embedded surface with
the second fundamental form $|A_S|_{C^2}< C$. There exists
$\varepsilon_0(C)>0$ with the following property.
Suppose $\G_0$ is an embedded surface
intersecting
$S$ transversally and $\gamma = \G_0 \cap S$ is a finite collection of closed curves
with 
 $\mbox{Length}(\gamma)=l < \varepsilon_0$. 
Then for every $\delta>0$ there exists
an embedded surface $\G_1$ obtained from $\G_0$ by an isotopy with surgeries $\{ \G_t \}_{t \in [0,1]}$, such that 
\begin{itemize}
    \item $\G_1$ is disjoint from $S$;
    \item $\G_t \setminus N_{\delta}(S)=\G_0 \setminus N_{\delta}(S)$;
    \item $\mbox{Area}(\G_t) \leq \mbox{Area}(\G_0) + \frac{l^2}{ \pi}$;
    \item $\F(\G_0, \G_t) \leq \frac{4l^2}{ \pi}$
\end{itemize}
for all $t\in [0,1]$.
\end{lemma}

\begin{proof}
For a point $q \in S$ let $E_{S,q}:T_qS \rightarrow S$ denote the
exponential map of $S$ at $q$ and, for a two-sided subset $U\subset S$,
let $\exp_U: U \times [-h,h]  \rightarrow N_{h}(S)$
denote the normal exponential map in $M$.
Let $$\Phi_q:  \{|(x,y)|\leq \varepsilon_0: (x,y) \in \mathbb{R}^2 \} \times [-h,h] \rightarrow \exp_{B_{\varepsilon_0}(p)}(B_{\varepsilon_0}(q) \times [-h,h])$$
be defined by $\Phi_q(x,y,t) = \exp_{B_{\varepsilon_0}(q)}(E_{S,q}(x,y),t)$.
Given a bound on the $C^2$-norm of the second fundamental form of $S$
we can choose $h $ and $\varepsilon_0$ sufficiently small,
so that $\Phi_q$ satisfies 
$|d\Phi_q|  \in (0.99, 1.01)$ for all $q \in S$. In particular, we have that
$l < \varepsilon_0$ is smaller than the injectivity radius of $S$.

The proof proceeds by induction on the number $k$ of connected components of 
$\gamma = \bigsqcup_{i=1}^k \gamma^i$.
For each connected component $\gamma^i$ of $\gamma$ there exists a unique
disk $D(\gamma^i)\subset S$ bounded by $\gamma^i$ that lies inside a disk of radius $\varepsilon_0$ in $S$. We will say that $\gamma^i$ is innermost if $D(\gamma^i)$ is disjoint from $\gamma^j$ for $j \neq i$. 

Choose an innermost component $\gamma^i$ of $\gamma$ and let $p$ be a point in the interior of $D=D(\gamma^i)$. Let $l_i = \mbox{Length}(\gamma^i)$.
Since $l_i \leq l < \varepsilon_0$ we have that $D \subset S \cap B_{p}(\varepsilon_0)$.
By the isoperimetric inequality in $\R^2$ and our
assumption on $d\Phi_p$ we have that \begin{equation}\label{smalldisk}\mbox{Area}(D)\leq \frac{1.01^2 l_i^2 }{0.99^2 4 \pi}
< \frac{l_i^2}{3 \pi} .\end{equation}

Let $U$ denote a small closed neighborhood of $D$ in $S$, so that
$U \cap \G_0 = \gamma^i$.
Since $\G_0$ intersects $S$ transversally, for
all sufficiently small $h_0\in (0,\min\{h, \delta\})$ there
exists a deformation of $\G_0$ in $N_{h}(U)$ to a
surface $\G'$ with 
\begin{equation}\G' \cap \overline{N_{h_0}(U)}=
\exp_{B_{\varepsilon_0}(p)}(\gamma \times [-h_0,h_0])\end{equation}
through surfaces of area at most $\mbox{Area}(\G) + \frac{l_i^2}{100}$. ($\overline{N_{h_0}(U)}$ denotes the closure of the tubular neighborhood $N_{h_0}(U)$.) Using this ``straightened surface" we will now construct the desired neck-pinch surgery obeying the desired area constraint. Let us give the details.

Define a smooth homotopy of closed curves
\begin{equation}\gamma_t \subset E_{S,p}^{-1}(D )\mbox{ for } t \in [0,1],\end{equation} 
such that $\gamma_t$ is an isotopy for $t \in [0,1)$, $\gamma_0 =  E_{S,p}^{-1}(\partial D ) $ and $\gamma_1 = \{ p\}$.
Let $D_t \subset \mathbb{R}^2$ denote the disk bounded by $\gamma_t$
and $A(t) = D_0 \setminus D_t$.
Let $L = \max_{t\in [0,1)}\mbox{Length}(\gamma_t)$.

Define an isotopy with one neck-pinch surgery
\begin{equation}\label{finaldef}\G_t' =
\begin{cases}
(\G' \setminus \overline{N_{h_0}(U)})
\cup \Phi_p \big(
 A_t \times \{-h_0,h_0 \}
\cup (\gamma_t \times [-h_0,h_0]) 
\big) &\mbox{ for } t\in [0,1);\\
(\G' \setminus \overline{N_{h_0}(U)})
\cup \Phi_p \big(
 D_0 \times \{-h_0,h_0 \}
\big) &\mbox{ for } t=1.
\end{cases}\end{equation}
 We have from \eqref{smalldisk}
\begin{equation}\mbox{Area}(\G_t' \cap \overline{N_{h_0}(U)})\leq 2*1.01^2 \mbox{Area}(D) + 1.01h_0L <  2.05 \frac{l_i^2}{3 \pi} + 1.01h_0L\end{equation} for $t \in [0,1]$. Choosing $h_0$ sufficiently small we obtain
\begin{equation}
\mbox{Area}(\G_t' \cap \overline{N_{h_0}(U)}))\leq  \frac{3l_i^2}{4 \pi}\mbox{ for all } t\in [0,1].
\end{equation}


Concatenating the deformation from $\G_0$ to $\G'$ and then the isotopy with surgery $\{\G'_t\}_{t\in [0,1]}$ we obtain an isotopy with one neck-pinch surgery to a surface $\G'_1$, such that \begin{equation} S\cap \G'_1 = \gamma \setminus \gamma_i.\end{equation} Summing up all the increases in the area we find that they are bounded by $\frac{l_i^2}{ \pi}$.
Some of the surfaces we defined above were only piecewise smooth, having corner-like singularities along embedded closed curves; 
there exists a corner smoothing that deforms them
into a family of smooth surfaces while
increasing the area by an arbitrarily small amount (cf. \cite[Section 7.5]{Mukherjee}, \cite[Section 3.2]{chambersliok}). 

By the inductive assumption there exists an isotopy with surgeries from $\G'_1$ to $\G_1$ satisfying the conclusions of the Lemma. Putting together the deformation from $\G_0$ to $\G_1'$ and the deformation from $\G_1'$ to $\G_1$ we obtain the desired isotopy with surgeries $\G_t$. The total area increase during the deformation is bounded by
\begin{equation}\frac{l_i^2}{\pi}+ \frac{(l-l_i)^2}{\pi}\leq \frac{l^2}{\pi}.\end{equation}

Finally, by \cite[2.1(20)]{P} we obtain for all $t\in [0,1]$
\begin{equation*}
    \F(\G_0, \G_t) \leq 2 (\mbox{Area}(\G_0 \setminus \G_t )+\mbox{Area}(\G_t \setminus \G_0 )) \leq \frac{4l^2}{\pi}.
\end{equation*}
\end{proof}

%
%

\subsection{Removing the thin part of $\G$}
Recall from (\ref{nh1}) that $N_h(.)$ denotes the $h$-tubular neighborhood of an embedded surface.
Given a surface that has small area outside of some open set (in our case, outside of the tubular neighborhood of some other surface) we can define an isotopy that reduces the area of that part of the surface even further
(see \cite[Lemma 7.1]{Montezuma1} for an analogous lemma in the context of Almgren--Pitts theory). 

\begin{lemma}[Reducing the area of thin hair]
 \label{contracting_hair}
Let $\Si$ be an embedded surface in $M$
then for all sufficiently small $h>0$ there exists $\varepsilon(\Si, h)>0$ with the following
property. 
For every $\delta>0$ and every embedded surface $\Gamma$ with 
$\mbox{Area}(\Gamma \setminus N_{h/2}(\Sigma))< \varepsilon$ there exists
a smooth isotopy $\Gamma_t$ with
\begin{enumerate}
\item $\Gamma_0 =\Gamma$, 
\item $\mbox{Area}(\Gamma_t) \leq \mbox{Area}(\Gamma)+ \delta$,
\item $\mbox{Area}(\Gamma_1 \setminus N_{3h/4}(\Si))< \delta$.
\end{enumerate}

\end{lemma}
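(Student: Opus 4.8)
The plan is to use a combination of a radial-type contraction toward $\Sigma$ inside $N_h(\Sigma)\setminus N_{h/2}(\Sigma)$ together with a localized shrinking argument to handle the part of $\Gamma$ lying outside $N_{h/2}(\Sigma)$. First I would fix $h>0$ small enough that $N_h(\Sigma)$ is a genuine tubular neighborhood diffeomorphic (via $\exp_\Sigma$) to $\Sigma\times[-h,h]$ (or the appropriate interval bundle if $\Sigma$ is nonorientable), and so that the metric on $N_h(\Sigma)$ is uniformly comparable to the product metric $g_\Sigma + dt^2$ with a constant $C(M,\Sigma,h)$. Then I would choose $\varepsilon = \varepsilon(M,\Sigma,h)$ so small that $Area(\Gamma\setminus N_{h/2}(\Sigma))<\varepsilon$ forces the ``bad'' part of $\Gamma$ to be, in a quantitative monotonicity sense, a union of small pieces: by the monotonicity formula for (almost) minimal surfaces — or more elementarily just by the isoperimetric/coarea estimate, since here $\Gamma$ is merely a smooth surface with no area bound — I would instead argue that $\Gamma\cap (N_{3h/4}(\Sigma)\setminus N_{h/2}(\Sigma))$ has small area and can be swept across the collar. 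The cleanest route is the coarea formula: writing $r(x)=\mathrm{dist}(x,\Sigma)$ on the collar, the sets $\{r=s\}\cap\Gamma$ have total $\mathcal H^1$-measure integrating (against $ds$) to at most $C\cdot Area(\Gamma\cap\{h/2<r<3h/4\})< C\varepsilon$, so there is a good level $s_0\in(h/2,3h/4)$ with $L(\Gamma\cap\{r=s_0\})\le C\varepsilon/h$ small.

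Next I would perform the isotopy in two stages. Stage one: push everything of $\Gamma$ lying in $\{r> s_0\}$ radially inward across the level $\{r=s_0\}$. Concretely, let $\phi_t$ be the flow generated by a vector field $-\chi(r)\partial_r$ where $\chi$ is a cutoff supported in $\{r>s_0/2\}$; applying $\phi_t$ for $t\in[0,1]$ to $\Gamma$ isotopes it to a surface $\Gamma^{(1)}$ with $Area(\Gamma^{(1)}\setminus N_{3h/4}(\Sigma))< \delta/2$ once $\varepsilon$ is small, because the portion being moved has area at most $C\varepsilon$ and the flow distorts area by a bounded factor. The subtlety is that this radial push must not increase $Area(\Gamma_t)$ by more than $\delta$: a naive radial flow can increase area (it stretches the angular directions as you move outward, but we are moving inward, so in the product model it is area-nonincreasing; the metric distortion contributes a multiplicative error $1+C(\varepsilon+h)$ which I absorb into $\delta$ by first choosing $h$ and then $\varepsilon$ small relative to $\delta/Area(\Gamma)$ — wait, that is circular since $Area(\Gamma)$ is not a priori bounded). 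To avoid this circularity I would only move the part of $\Gamma$ with $r>s_0/4$, whose area is at most $C\varepsilon$, leaving the rest fixed; then the area increase is at most $C\varepsilon$ in absolute terms regardless of $Area(\Gamma)$, and I choose $\varepsilon<\delta/(2C)$.

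Stage two: having arranged $Area(\Gamma^{(1)}\setminus N_{3h/4}(\Sigma))<\delta/2$, I am done — but actually the above already gives conclusion (3) directly with $\delta/2$, so there is no second stage needed beyond bookkeeping; I would just rename $\Gamma_t$, check $\Gamma_0=\Gamma$, and verify (2) holds with the stated $\delta$. I expect the \textbf{main obstacle} to be exactly the circularity flagged above: the area-increase budget $\delta$ is \emph{additive} and $Area(\Gamma)$ is unbounded, so the isotopy must be designed to touch only a set of a priori small (absolute) area, namely a thin collar slab whose $\Gamma$-area is controlled solely by the hypothesis $Area(\Gamma\setminus N_{h/2}(\Sigma))<\varepsilon$; everything hinges on choosing the cutoff region to lie entirely inside $\{r>h/2\}$ (where we have the $\varepsilon$ bound) and then invoking the bounded metric distortion of the ambient radial flow. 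A secondary technical point is ensuring the isotopy is smooth and ambient (so that it extends and composes with the families used in Theorem \ref{maininterpolation}); this follows by taking $\phi_t$ to be the time-$t$ flow of a smooth compactly supported vector field on $M$, which is automatically an ambient isotopy.
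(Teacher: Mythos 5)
There is a genuine gap, and it sits exactly at the point you flagged and then waved away. In the lemma the quantifiers are: $\varepsilon=\varepsilon(M,\Sigma,h)$ is fixed \emph{first}, and the conclusion must then hold for \emph{every} $\delta>0$, which may be far smaller than $\varepsilon$. Your final bookkeeping ``choose $\varepsilon<\delta/(2C)$'' reverses this order and proves a strictly weaker statement; with $\varepsilon$ fixed, a one-shot push whose area cost and leftover area are only bounded by $C\varepsilon$ gives neither conclusion (2) nor (3) for small $\delta$. A second, independent problem is the mechanism itself: the vector field $-\chi(r)\partial_r$ exists only where the normal exponential map gives a genuine collar, so it never touches the part of $\Gamma$ lying beyond the cut locus of $\Sigma$ --- and that part is precisely the ``thin hair'' the lemma is about: a tube of tiny area can be very long, knotted, and wander anywhere in $M\setminus N_h(\Sigma)$. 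Nor can you fix this by some global ``radial'' ambient isotopy pushing everything inward: an isotopy of $M$ carrying $M\setminus N_{s_0}(\Sigma)$ into $N_{3h/4}(\Sigma)$ does not exist in general (for instance $\Sigma$ a $2$-sphere in $M\neq \mathbb{S}^3$: the complement of a ball in $M$ cannot embed in $S^2\times(-1,1)\subset \mathbb{S}^3$ by irreducibility of $\mathbb{S}^3$), and a cleverer isotopy moving only the hair is exactly the nontrivial content that is missing.

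The paper avoids both issues by not attempting a one-shot deformation at all: it runs the constrained $\gamma$-reduction of Meeks--Simon--Yau (in the Simon--Smith/Colding--De Lellis form) in $U=M\setminus \overline{N}(\Sigma)$, i.e.\ it takes a minimizing sequence of $\delta$-isotopies supported in $U$, so condition (2) holds by fiat along the whole deformation. The limit of the minimizing sequence is a smooth minimal surface in $U$ of area less than $\varepsilon$, and the monotonicity formula forces such a surface to have no points at definite distance from $N_h(\Sigma)$ once $\varepsilon(M,\Sigma,h)$ is small; hence far enough along the minimizing sequence the area outside the larger neighborhood drops below any prescribed $\delta$, which is how (3) is obtained for every $\delta$ with a single $\varepsilon$. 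Some variational or iterative mechanism of this kind (rather than a fixed flow with a fixed distortion constant) is what your argument would need to supply.
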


\begin{proof}
We recall the following ``bounded path" version of the $\gamma$-reduction of Meeks-Simon-Yau \cite{MSY} used in the min-max setting of Simon--Smith \cite{Smith} (see \cite[Section 7]{C&DL}). Let $\Gamma$ be an embedded surface in $M$, and $U$ be an open set included in $M$. Let $\mathfrak{Is}(U)$ be the set of isotopies of $M $ fixing $M\backslash U$, with parameter in $[0,1]$. For $\delta>0$ define
$$\mathfrak{Is}_{\delta}(U) = \{\psi\in\mathfrak{Is}(U); \mbox{Area}(\psi(\tau,\Gamma)) \leq \mbox{Area}(\Gamma) + \delta \text{ for all } \tau \in[0,1]\}.$$
Suppose that the sequence $\{\psi^k\}\subset \mathfrak{Is}_{\delta}(U)$ is such that 
$$\lim_{k\to\infty}\mbox{Area}(\psi^k(1,\Gamma)) = \inf_{\psi\in\mathfrak{Is}_{\delta}(U)}\mbox{Area}(\psi(1,\Gamma)).$$
Such a sequence is called minimizing. Then in $U$, $\psi^k(1,\Gamma)$ subsequentially converges in the varifold sense to a smooth minimal surface $\hat{\Gamma}$ (which might not be smooth up to $\partial U$) \cite{MSY}.

Let us apply this $\gamma$-reduction procedure with constraint to $U:=M\backslash \bar{N}_{\frac{h}{2}}(\Si)$. Let $\{\psi^k\}\subset \mathfrak{Is}_{\delta}(U)$ be a minimizing sequence, such that the induced varifolds $|\psi^k(1,\Gamma)|$ converge to a varifold
$V$ supported on a minimal surface $\hat{\Gamma}$ in $U$.

By the monotonicity formula for minimal submanifolds of a Riemannian manifold 
(\cite[(5)]{C&DL}, \cite[(7.5)]{ColMin})
we
have
$$\mbox{Area}(\hat{\Gamma} \cap B_r(p)) \geq Cr^2 $$
for all $r< r_0$, for some constants $C>0$ and $r_0>0$ that depend only on $M$. 
If $\hat{\Gamma}$ contains a point $ p \in U \setminus N_{3h/4}(\Si)$ and $h<4r_0$, then
 \begin{equation}\mbox{Area}(\hat{\Gamma} \cap B_{\frac{h}{4}}(p) \geq C (\frac{h}{4})^2.\end{equation} Hence, if we choose $\varepsilon < C (\frac{h}{4})^2$ then the intersection of 
$\hat{\Gamma}$ with $ U \setminus N_{3h/4}(\Si)$ is empty.

It follows that the area of $(M\backslash {N_{\frac{3h}{4}}}(\Si))\cap \psi^k(1,\Gamma)$ tends to zero as $k \rightarrow \infty$ and for some $k'$ large enough, it will be smaller than
$\delta$. The lemma is proved by taking $\Gamma_t = \psi^{k'}(t,\Gamma)$.
\end{proof}

The following lemma shows that we can remove the thin part
of $\G$ that lies outside of a small 
tubular neighborhood of $\G$
via an isotopy with surgeries, while increasing the area
by an arbitrarily small amount.  We will cover the set $M \setminus N_h(\Sigma)$ by a collection of small balls and use isotopies with neck-pinch and collapse surgeries to inductively remove portions of the surface in each ball. There is a small technical difficulty: during the neck-pinch deformations, we add small disks to the surface that could potentially intersect balls from which the surface was removed at the previous steps of the inductive process. We resolve this issue using the fact that, assuming $\delta'$ is small enough, the disks will lie very close to the surface and will not intersect concentric balls of smaller radius that still cover $M \setminus N_h(\Sigma)$.

\begin{lemma} \label{contracting_hair2}
Let $\Si$ be an embedded surface in $M$. There exists $\overline{h}>0$, such that for all $h \in (0, \overline{h})$ 
there exists $\varepsilon(\Si, h)>0$ with the following
property. 
For every $\delta>0$
and every embedded surface $\Gamma$ with 
$\mbox{Area}(\Gamma \setminus N_{h/2}(\Sigma))< \varepsilon$ there exists an isotopy with surgeries $\{ \G_t \}_{t\in [0,1]}$,
such that
\begin{enumerate}
\item $\Gamma_0 =\Gamma$, 
\item $\mbox{Area}(\Gamma_t) \leq \mbox{Area}(\Gamma)+ \delta\mbox{ for all } t\in [0,1]$,
\item $\Gamma_1 \subset  N_{h}(\Si)$.
\end{enumerate}
\end{lemma}

\begin{proof}
Let $\delta' >0$ be a small constant we will choose later.
Choose $\varepsilon(\Sigma, h)$ to be sufficiently small, so that by Lemma \ref{contracting_hair} there exists an isotopy
$\{\G_t\}_{t\in [0,1]}$ with $\Gamma_0=\Gamma$ to a surface $\G'=\Gamma_1$ with 
\begin{equation}\mbox{Area}(\G' \setminus N_{3h/4}(\Si))< \delta'
\end{equation}
and \begin{equation}\mbox{Area}(\G_t) < \mbox{Area}(\G) + \frac{\delta}{2}\mbox{ for all } t\in[0,1].\end{equation}

Let $\mbox{injrad}(M)$ denote the injectivity radius of $M$.
Choose radius \begin{equation}\label{choiceofr} r< \min\{ \mbox{injrad}(M), \sqrt{\delta}, h/4\},\end{equation} with the additional property that every ball of radius $r$
in $M$ is $1.01$-bi-Lipschitz diffeomorphic to the Euclidean ball of the same radius.
Let $\{ B_{r/4}(x_i )\}_{i=1}^N$ be a collection of balls covering
$M \setminus N_{h}(\Si)$ with $x_i \in M \setminus N_{h}(\Si)$. 

Assume now that $\delta'$ is chosen sufficiently small to satisfy $\delta'< 2^{-(N+4)}r^2$.
We will inductively define an isotopy with surgeries
$\tilde{\G}_t$, so that for 
$j=1,...,N$
the following is satisfied:
\begin{enumerate}\label{itemsneeded}
    \item $\tilde{\G}_0=\G'$;
    \item $\tilde{\G}_t \cap N_{3h/4}(\Sigma) = \G' \cap N_{3h/4}(\Sigma)$;
    \item \label{thirdhere} $\mbox{Area}(\tilde{\G}_t \setminus N_{3h/4}(\Sigma)) \leq  2^{-(N-j+3)}r^2$ for $t \in [\frac{j-1}{N}, \frac{j}{N}]$;
    \item $\tilde{\G}_{\frac{j}{N}} \subset M \setminus \bigcup_{i=1}^j B_{r(\frac{1}{2} - \frac{1}{8}\sum_{i=1}^j 2^{i-N})}(x_i )  \subset M \setminus \bigcup_{i=1}^j B_{r/4}(x_i)$.
\end{enumerate}

Having established such an inductive process, we get from item \ref{thirdhere} and the choice of $r$ \eqref{choiceofr} that for all $t\in [0,1]$ there holds 
\begin{equation}
\mbox{Area}(\tilde{\G}_t \setminus N_{3h/4}(\Sigma)) \leq  2^{-(N-j+3)}r^2\leq 2^{-(N-j+3)}\delta<\frac{\delta}{8}. 
\end{equation}
Concatenating the isotopy $\{\Gamma_t\}_{t\in [0,1]}$ with $\{\tilde{\Gamma}_t\}_{t\in [0,1]}$ gives the desired isotopy purported in the statement of the lemma.

Let us now carry out the inductive construction.  Fix $j \in \{1,...,N\}$. We use the coarea inequality to find $r_j \in [r/2,r]$,
so that $\tilde{\G}_{\frac{j-1}{N}}$ intersects $\partial  B_{r_j}(x_j )$ in a collection of curves $\gamma = \sqcup \gamma_l$ of length bounded
by 
\begin{equation} \label{neckpinch coarea estimate}
 \mbox{Length}(\gamma) \leq \frac{2\mbox{Area}(\tilde{\G}_{\frac{j-1}{N}} \cap B_{r}(x_j))}{r}
\leq \frac{2* 2^{j-4-N}r^2}{r}\leq 2^{-(N-j+3)}r  
\end{equation}
We apply Lemma \ref{intersection neck-pinch} to define an isotopy with neck-pinch surgeries along the intersection $\gamma$. As a result we obtain a surface 
 $\G^j$ that can be written as a disjoint union $\G^j =\G^j_0 \sqcup \G^j_1$, with $\G^j_0$ contained in the closed ball $B_{r_j}(x_j)$
 and $\G^j_1 $ contained in $M \setminus B_{r_j }(x_j)$.  
 
By Lemma \ref{intersection neck-pinch} and \eqref{neckpinch coarea estimate}  in the process of neck-pinch surgery deformations the areas of surfaces increase by at most
 \begin{align} \label{area from length bound}
\frac{1}{\pi} \mbox{Length}(\gamma)^2 \leq  \frac{2^{-2(N-j+3)}r^2}{\pi} 
 \end{align}
Using the fact that $B_{r_j}(x_j)$ is $1.01$-bi-Lipschitz diffeomorphic
 to the Euclidean ball of the same radius, we can radially contract
 $\G^j_0$ to a point inside $B_{r_j}(x_j)$, while the area only increases by at most a factor of $1.01^2$. Thus, we obtain an isotopy with one collapse surgery (recall Definitions \ref{def:collapse} and \ref{def:collapse1}) from $\G^j_0 \sqcup \G^j_1$ to $\G^j_1$ .
 Concatenating these two deformations we obtain an isotopy with surgeries $\{\tilde{\G}_t \}_{t \in [\frac{j-1}{N},\frac{j}{N} ]}$ from $\tilde{\G}_{\frac{j-1}{N}}$ to $\G^j_1 = \tilde{\G}_{\frac{j}{N}}$,
 with areas (by \eqref{area from length bound}  and the inductive assumption) satisfying
 \begin{align*}
     \mbox{Area}(\tilde{\G}_{t} \setminus N_{3h/4}(\Sigma)) & \leq   1.01^2(2^{-(N-j+4)}r^2 + \frac{2^{-2(N-j+3)}r^2}{\pi}) \\
     &  < \frac{2^{-(N-j+4)}(\pi+2)}{\pi}r^2 < 2^{-(N-j+3)}r^2.
 \end{align*}

 In particular, we have
 $$\mbox{Area}(\tilde{\G}_{t}) < \mbox{Area}(\G' \cap N_{3h/4}(\Sigma)) + \delta/2 < \mbox{Area}(\G) + \delta.$$
 
 Recall from the proof of Lemma \ref{intersection neck-pinch} that the surface $\tilde{\G}_{\frac{j}{N}} $ is obtained from (a small perturbation of) $\tilde{\G}_{\frac{j-1}{N}} \setminus B_{r_j}( x_j)$ by attaching (small perturbations of) small disks in $\partial B_{r_j}(x_j)$ bounded 
 by connected components of $\gamma$.
 Since $B_r(p_j)$-ball is $1.01$-bi-Lipschitz diffeomorphic to the Eucledian ball, we have that each such disk will lie in an $\eta_j-$neighborhood 
 $N_{\eta_j}(\tilde{\G}_{\frac{j-1}{N}} \setminus B_{r_j}(x_j))$ for $\eta_j<\mbox{Length}(\gamma)$. We would like to ensure that these disks do not intersect the ball $B_{r/4}(p_l)$ for any $l \leq j$.  Indeed, by \eqref{neckpinch coarea estimate} and the inductive assumption we have
 \begin{align*}
     \tilde{\G}_{\frac{j}{N}}& \subset N_{2^{-(N-j+3)}r}(\tilde{\G}_{\frac{j-1}{N}} \setminus B_{r/2}( x_j))\\
     & \subset N_{2^{-(N-j+3)}r} \Big(M \setminus \big( B_{r/2}( x_j) \cup\bigcup_{i=1}^{j-1} B_{r(\frac{1}{2} - \frac{1}{8}\sum_{i=1}^{j-1} 2^{i-N})}(x_i ) \big)\Big) \\
     & \subset M \setminus \bigcup_{i=1}^j B_{r(\frac{1}{2} - \frac{1}{8}\sum_{i=1}^j 2^{i-N})}(x_i )  \subset M \setminus \bigcup_{i=1}^j B_{r/4}(x_i).
 \end{align*}
Since the collection of balls $\{ B_{r/4}(x_i )\}_{i=1}^N$ covers
$M \setminus N_{h}(\Si)$ we get that $\tilde{\G}_1 \subset N_{h}(\Si)$.
 
 This finishes the construction of the desired isotopy with surgeries.

\end{proof}

\subsection{Squeezing maps.} 
\label{sec:squeezing}
Let $\Si \subset M$ be a smooth, closed, two-sided, embedded surface which is a strictly stable minimal surface. In this paper, we will only consider the situation where either $\Si$ is a connected component of the boundary of $M$, or $\Si$ is contained in the interior of $M$.
It will be convenient to foliate an open neighborhood of $\Si$
not by level sets of the distance function,
but rather by surfaces with mean curvature vector pointing
towards $\Si$, which arise as graphs 
of the first eigenfunction
of the stability operator over $\Si$.  Such a foliation was used in \cite[Subsection 5.7]{MaNeindexbound}. 

More precisely, for such $\Si$, there exists
a neighborhood of $\Si$ denoted $\Omega_1$, and a diffeomorphism 
\begin{equation} \label{def phi}
\begin{split}
\phi:& \Si \times [0,1) \rightarrow \Omega_1 \subset M \quad \text{if $\Si$ is a subset of the boundary of $M$},\\
\phi:& \Si \times (-1,1) \rightarrow \Omega_1 \subset M \quad \text{if $\Si$ is in the interior of $M$},
\end{split}
\end{equation}
such that $\phi(\Si,0) = \Si$ and for each $t\in (-1,1)\setminus \{0\}$, $\phi(\Si,t)$ is an embedded surface with nonzero mean curvature vector pointing towards $\Si$.

Set $\Omega_h := \phi(\Sigma \times [0,h))$ if
$\Si \subset \partial M$ or $\Omega_h := \phi(\Sigma \times (-h,h))$ if
$\Si$ is in the interior of $M$

We now define the following `squeezing maps'': for $t\in [0,1]$, set
\begin{equation} \label{P_t def}
P_t(\phi(x,s)) := \phi(x,(1-t)s).
\end{equation}
Consider also the projection map 
\begin{equation} \label{proj P def}
P: \Omega_1 \rightarrow \Sigma
\end{equation}
$$P(\phi(x,s)) := x.$$
We refer to \cite[Subsection 5.7]{MaNeindexbound} for the details of this
construction.  We summarize the main properties of the squeezing-map $P_t$:

\begin{enumerate}
\item $P_0(x) = x$ for all $x \in  \Omega_1$
and $P_t(x)=x$ for all $x \in \Sigma$ and $0 \leq t< 1$;
\item 
For all positive $h>0$ small enough, 
there exists
$t(h) \in (0,1)$  with  $P_{t(h)}(\Omega_1)\subset N_{h}(\Si)$ where $N_{h}(\Si)$ is the $h$-tubular neighborhood of $\Si$, see (\ref{nh1});
\item 
\label{squeezing: area estimate}
$P_t$ is area-nonincreasing: for any surface $S \subset \Omega_1$ and for all 
$t \in (0,1)$ we have 
$$\mbox{Area}(P_t(S)) \leq \mbox{Area}(S)$$ with equality holding
if and only if $S \subset \Sigma$;

\item 
\label{squeezing: graphical convegence}
Let $U \subset \Sigma$ be an open set,
$f:U \rightarrow \mathbb{R}$ be a smooth function with absolute value
less than $1$ and let $S = \{\phi(x,f(x)): x \in U \}$.
Then we have graphical smooth convergence of $P_t(S)$ to $U$ 
as $t \rightarrow 1$.
\end{enumerate}

Property (\ref{squeezing: area estimate}) is proved in 
\cite[Proposition 5.7]{MaNeindexbound}. All other properties follow from the definition. The importance of the above is that we can use the squeezing map to push an embedded surface $S$ in a small tubular neighborhood of $\Sigma$
towards $\Sigma$ while simultaneously decreasing its area.

\begin{defi}We say that an embedded surface $S$ is \emph{graphical} if it
satisfies
$S = \{\phi(x,f(x)): x \in U \}$ for some function $f$ and a 
subset $U \subset \Sigma$.
\end{defi}

In the following sections, if $\Si$ is a strictly stable minimal surface as above, we will only consider tubular neighborhoods $N_h(\Si)$ with $h$ so small that 
$$N_h(\Si) \subset \Omega_1.$$

\subsection{Stacked surface.} \label{stacked_global}
 Let $\Sigma:=\partial M$ and assume that $\Si$ is a two-sided strictly stable minimal surface, 
 Let $\phi: \Si \times [0,1) \rightarrow \Omega_1 \subset M$ be the diffeomorphism defined in (\ref{def phi}).
 Let $N_h(\Si)$ be the $h$-tubular neighborhood of $\Sigma$ for some $h>0$ small enough so that $N_h(\Si)$ is well-defined (\ref{nh1}) and contained in $\Omega_1$, and let $U \subset \Sigma$ be an open set. 
\begin{defi}
We will say that an embedded surface $S$ contained in $N_h(\Sigma)$
is \emph{$(h_1, \dots, h_m)$-stacked over $U$ inside $N_h(\Sigma)$}
if
there exist constants
$$0 \leq h_1< \dots < h_m \leq 1$$ 
such that
$$S \cap P^{-1}(U) = \bigsqcup_{i=1}^m \phi(U, h_i).$$
We will also use the shorthand \emph{``$m$-stacked''}.
\end{defi}

Observe that by Property (\ref{squeezing: graphical convegence}) of $P_t$ for every $\eta>0$ there exists $h'>0$
so that if $h_m \leq h'$ and $S$
is $(h_1, \dots, h_m)$-stacked over $U \subset \Sigma$ then 
\begin{equation} \label{areasareclose} m \mbox{Area}(U) - m\eta \leq \mbox{Area}(S \cap P^{-1}(U)) \leq m \mbox{Area}(U) + m\eta\end{equation}
and $S \cap P^{-1}(U)$ is $m\eta$-close in the varifold $\mathbf{F}$-distance to $\Sigma \cap U $ with multiplicity $m$.

The main result of this section is the following step towards Theorem \ref{maininterpolation}.  Namely, we will first show that (in the setting and notation of Theorem \ref{maininterpolation}) we can define an isotopy with surgeries
to deform the surface $\Gamma$ into a stacked form while obeying the area constraint:

\begin{prop}[Stacking] \label{stacking2}
Let $(M,g)$ be a Riemannian 3-manifold with boundary $\Si:=\partial M$. Suppose that $\Sigma$ is a strictly stable minimal surface and $\Sigma = \sqcup_{i=1}^\nu \Sigma_i$, where $\Sigma_i$ are the distinct  
boundary components of $ M$.  

There exist $\overline{h}, \varepsilon >0$, depending on $M$ and $\Sigma$, such that for all $h \in (0, \overline{h})$ the following holds. 
Suppose that a closed embedded surface $\Gamma$ satisfies
$$\mbox{Area} (\Gamma \setminus N_{h}(\Sigma))< \varepsilon.$$
Let $\delta>0$.  Then 
there exists an isotopy 
with surgeries $\{\Gamma_t\}_{t\in[0,1]}$
and a collection of integers $\{ k_i\geq 0 \}_{i=1}^{\nu}$,
 such that:
\begin{enumerate}
\item $\Gamma_0=\Gamma$,
\item $\mbox{Area}(\Gamma_t) < \mbox{Area}(\Gamma) + \delta$,
\item $\Gamma_1 \subset N_h(\Sigma)$ and for each $i=1,2,...,\nu$, $\Gamma_1$ is 
$k_i$-stacked over $\Si_i$ in $N_h(\Si_i)$.

\end{enumerate}
\end{prop}


The remainder of this section will be devoted to the
proof of Proposition \ref{stacking2}.  Fix $\Gamma$ as in the statement of Proposition \ref{stacking2}.

Choose $\overline{h}>0$ to be sufficiently small, so that $N_{\overline{h}}(\Sigma) \subset \Om_1$.
Moreover, let $\overline{h}>0$,
$\varepsilon>0$ be chosen smaller than the corresponding constants in 
 Lemma \ref{contracting_hair2}. Fix $h \in (0, \overline{h})$.
Applying Lemma \ref{contracting_hair2} we can assume, without any loss of
generality, that $\G \subset N_h(\Sigma)$.
We make this assumption for the remainder of this section.


\subsection{Choice of constants $\hat{r}>0$, $\hat{h}>0$, and open neighborhood $\Omega_{\hat{h}}$} \label{r}
Here we make choices for two numbers $\hat{h},\hat{r} \in (0,1)$ which will be useful in the proof of Proposition \ref{stacking2}.

Fix $\delta>0$ as in the statement of Proposition \ref{stacking2}. There exists $r_1>0$ so that 
for any $r\leq r_1$ we have
\begin{equation} \label{eq:areaofballs}
||P_{\sharp}(|\G|) \llcorner B_r(x)||(M) \leq 
\frac{\delta}{200}.\end{equation}
(Varifold notation was introduced after the statement of Theorem \ref{maininterpolation}; intuitively, $||P_{\sharp}(|\G|) \llcorner B_r(x)||(M)$ is the area of the projection
of $\G$ onto $\Sigma$, counted with multiplicity.)
By continuity, we can choose $t_1$ close enough to $1$ so that 
for every $t \in [t_1,1)$ the mass of $P_t(\Gamma) \subset \Omega_{1 - t_1}$ in any ball $B_{r}(x)$ is at most $\delta/100$ for any $r<r_1$ and $x\in\Sigma$.   We replace $\Gamma$ with $P_{t_1}(\Gamma)$ (but do not relabel it).

Now we pick $\hat{r}= \hat{r}(\Si, \G, \delta)>0$, satisfying the following properties:

\begin{enumerate}[label=\roman*.]
\item  $\hat{r}$ is smaller than the minimum of the convexity 
radii of $M$ and $\Si$; \label{i}
\item $\hat{r} < r_1$, that is, for every $x \in \Si$ 
we have \begin{equation} \label{ii}\mbox{Area}(\G \cap B_{\hat{r}}(x)) < \frac{\delta}{100}.\end{equation}
\item For every $x \in \Si$ 
we have that the exponential map $\exp: B^{Eucl}_{\hat{r}}(0) \rightarrow B_{\hat{r}}(x)$
satisfies $\frac{1}{1.01} < |d \exp_y| < 1.01$ for all $y \in B^{Eucl}_{\hat{r}}(0)$. \label{iii}
\end{enumerate}
It will also be convenient to have control over the areas of stacked surfaces in balls $B_{\hat{r}}(x)$. For each $i$ let $\tilde{m}_i$ denote the largest integer
with $\tilde{m_i} \mbox{Area}(\Si_i) \leq \mbox{Area}(\G) + \delta$.
We will assume that $\hat{r}$ is sufficiently small, so that 
$$\tilde{m_i}\mbox{Area}(\Si \cap B_{\hat{r}}(x)) \leq \frac{\delta}{200}$$ for all $x \in \Si_i$ and for all $i =1,..., \nu$. Consequently, we have the following:
\begin{enumerate}
    \item[iv.] Suppose $S$ is a stacked surface in the neighborhood of $\Si_i$ with $\mbox{Area}(S) \leq \mbox{Area}(\G) + \delta$, then for some $t>0$  
we have \begin{equation} \label{iv}\mbox{Area}(P_t(S) \cap B_{\hat{r}}(x)) < \frac{\delta}{100}
    \end{equation} 
    for every $x \in \Si_i$.
\end{enumerate}

We pick $\hat{h} \in (0, 1 - t_1)$ sufficiently small, so that $\Om_{\hat{h}} \subset N_h(\Sigma)$ 
and $\Omega_{\hat{h}} \subset N_{\hat{r}/10}(\Sigma)$. By applying the squeezing map $P_t$
to $\G$ we can assume without loss of generality
that $\G \subset \Omega_{\hat{h}}$.

\subsection{Choice of triangulation and constant $c$.} \label{triangulation}

Let $\Si_i$  be a connected component of $\Si$. Fix a triangulation 
of $\Si_i$ by 2-simplices 
$$\{S_j ^i\}_{1 \leq j \leq N_i},$$
so that for each 2-simplex $S_j ^i$ ($1 \leq j \leq N_i$) in the triangulation,
there exists a point 
\begin{equation}\label{pij def}
p_j^i \in S_j^i \text{ such that }S_j^i \subset B_{\hat{r}/3}(p_j^i).
\end{equation}
Assume the numbering is chosen in such a way that 
for every $j\geq 1$, the solid triangle
$S_{j+1}^i$ shares an edge with $S_{j'}^i$ for some $j' \leq j$.
We cover $\Omega_{\hat{h}}$ by the collection of
3-dimensional cells 
$$\{\Delta_j^i = P^{-1}(S^i_j) \cap \Om_{\hat{h}} \}$$
where $P$ is the projection map defined in \ref{proj P def}.

Note that the interiors of cells $\Delta_j^i$ are disjoint and, by the definition of $\hat{h}, \hat{r}$
in Subsection \ref{r},  $\Delta_j^i$ is contained in a ball  $B_{\hat{r}/2}(p_j^i)$.
We define 
\begin{equation} \label{def of c}
   c := \min_{i,j}\{ \mbox{Area}(S_j^i)\}>0. 
\end{equation}
Note that our choice of $c$ depends on the choice of 
the
triangulation $\{S_j ^i\}$ and hence depends
on the surfaces $\G$, $\Si$ and the choice of $\delta>0$ via \eqref{eq:areaofballs},
\eqref{ii} and \eqref{iv}.




\subsection{Essential multiplicity.} \label{essmulti}
The goal of this section is to perform isotopies and surgeries that decrease the area of $\G$ and that are supported in small balls. Hence, it will be convenient to define a notion of ``essential multiplicity,"
which corresponds to the smallest number of sheets that can be obtained by such local deformations. Note that if the surface $\G$ consists of two parallel sheets joined by a knotted neck as in Fig. \ref{fig:essential}, 
then it is not possible to decrease the number of sheets by a local deformation, as this would require unknotting and then ``opening up" the neck. Such an isotopy would necessarily leave the small ball. 
In the following definition, we consider all isotopic deformations of $\G$ in a small cylindrical region containing a cell $\Delta_j^i$ and define essential multiplicity as the smallest number of curves intersecting the boundary of the region, excluding curves that can be removed by a neck-pinch surgery.
More precisely, we have the following.

Given a point $p \in \Si$  and $\rho>0$,
consider the 3-dimensional region 
\begin{equation} \label{def D_r}
D_{\rho}(p) = P^{-1}(B_{\rho}(p) \cap \Si) \subset \Omega_1
\end{equation}
and consider the 2-dimensional ``cylindrical'' surface
\begin{equation} \label{def C_r}
C_{\rho}(p) = P^{-1}(\partial B_{\rho}(p) \cap \Si) \subset \Omega_1.
\end{equation}

Suppose $\tilde{\G}$ is a closed embedded surface contained in $\Omega_{\hat{h}}$. We use the notations of Subsections \ref{r}, \ref{triangulation}. 
Let $r' \in (\hat{r}/2, \hat{r})$ and fix a point $p_j^i$ (recall $p_j^i$ was defined in \eqref{pij def}). 
Let $\mathcal{S}(\tilde{\G},p_j^i, r')$ denote the set of surfaces 
$S \subset \Omega_{h}(\Si)$, 
such that $S$ intersects $C_{\hat{r}/2}(p_j^i)$ transversally
and such that
there exists an isotopy from $\tilde{\G}$ to $S$
through surfaces $S_t'$
such that $S_t' \setminus D_{r'}(p_j^i) = \tilde{\G} \setminus D_{r'}(p_j^i)$.
Let $k_{i,j}(S)$ denote the number of connected components of
$S \cap C_{\hat{r}/2}(p_j^i)$, which do not bound a 2-dimensional disk
inside $ C_{\hat{r}/2}(p_j^i)$.

\begin{defi} \label{k_ess defi}
Let $r' \in (\hat{r}/2, \hat{r})$ and fix a point $p_j^i$.
We define the \emph{essential multiplicity} of $\tilde{\G}$ in
$B_{r'}(p_j^i)$ to be 
\begin{equation}
k_{ess}(\tilde{\G},i,j,r') := \inf \{k_{i,j}(S)| S \in \mathcal{S}(\tilde{\G},p_j^i,r')  \}.\end{equation}
\end{defi}

\begin{remark} \label{une remark}
It follows from the definition that
\begin{equation}k_{ess}(\tilde{\G},i,j,r_1) \leq k_{ess}(\tilde{\G},i,j,r_2)\end{equation}
for $r_1\geq r_2$.
Note also that in this definition we may replace $\hat{r}/2$ by any number less than $r'$ and we obtain the same value for $k_{ess}(\G,i,j,r')$.  Moreover, for any $p\in \Sigma$ and $\rho>0$ such that 
$D_{\rho}(p) \subset D_{\hat{r}/2}(p_j^i)$, if $\tilde{\G} \cap C_{\rho}(p)$ is transversal and has fewer than $k$
connected components which do not bound a disk in $C_{\rho}(p)$, 
then we can radially isotope $\tilde{\G}$ from $p$ inside $D_{r'}(p_j^i)$, while leaving it unchanged outside of $D_{r'}(p_j^i)$, to a
surface $S \in \mathcal{S}(\tilde{\G},p_j^i,r')$ with $k_{i,j}(S)<k$. In that case,  
$$k_{ess}(\tilde{\G},i,j,r') <k.$$
\end{remark}


From the coarea formula we obtain the following
lower bound for the area of the surface in a cell in terms of the essential multiplicity:


\begin{lemma} \label{essential}
Fix $\iota>0$. For $t$ small enough depending on $\Sigma$, $\G$ and $\iota$, the squeezed surface $\Gamma':=P_{1-t}(\G)$ satisfies for all $i,j$ and $r' \in (\hat{r}/2, \hat{r})$:
\begin{equation}\label{essentiallower}
\mbox{Area}(\Gamma' \cap D_{r'}(p_j^i)) \geq k_{ess}(\Gamma',i,j,r') \mbox{Area}(\Si \cap D_{r'}(p_j^i)) - \iota.
\end{equation}
\end{lemma}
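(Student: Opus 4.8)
The plan is to obtain \eqref{essentiallower} from the coarea formula applied on the foliated neighborhood $\Omega_h(\Sigma)$, using the fact that the squeezed surface $\Gamma' = P_{1-t}(\Gamma)$ becomes essentially graphical over $\Sigma$ with multiplicity controlled below by $k_{ess}$. First I would fix $p_j$ and $r' \in (r/2,r)$ and, after an arbitrarily small perturbation, assume $\Gamma'$ is transverse to $C_\rho(p_j)$ for a.e.\ $\rho \in (0,r')$. Over $\Sigma$ we have the submersion $P: \Omega_h(\Sigma) \to \Sigma$, and $P^{-1}(B_{r'}(p_j)\cap\Sigma) = D_{r'}(p_j)$. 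The coarea formula for $P$ restricted to $\Gamma'$ gives
\begin{equation*}
Area(\Gamma' \cap D_{r'}(p_j)) \geq \int_{B_{r'}(p_j)\cap\Sigma} \#\big(P^{-1}(x) \cap \Gamma'\big)\, J(x)\, d\mathcal{H}^2(x),
\end{equation*}
where $J$ is the Jacobian factor of $P$ along $\Gamma'$; because the leaves $\phi(\Sigma,s)$ of the foliation converge smoothly to $\Sigma$ as the graph heights go to $0$, choosing $t$ close to $1$ (so $\Gamma' \subset \Omega_{h'}$ with $h'$ tiny) makes $J(x) \geq 1 - \varepsilon'$ pointwise with $\varepsilon'$ as small as we like, and also makes $Area(\Sigma \cap D_{r'}(p_j))$ the leading term so that the loss can be absorbed into the additive $\varepsilon$.

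The remaining point is the pointwise lower bound $\#(P^{-1}(x)\cap\Gamma') \geq k_{ess}(\Gamma',p_j,r')$ for a.e.\ $x \in B_{r'}(p_j)\cap\Sigma$. This is where the definition of essential multiplicity does its work: if for some $\rho \in (0,r')$ the slice $\Gamma' \cap C_\rho(p_j)$ had fewer than $k := k_{ess}(\Gamma',p_j,r')$ essential curves, then as noted in the text one could radially isotope $\Gamma'$ inside $D_{r'}(p_j)$ (leaving it fixed outside) to a surface with fewer than $k$ essential curves in $C_{r/2}$, contradicting the infimum defining $k_{ess}$. Hence for \emph{every} $\rho \in (0,r')$ with transverse intersection, $\Gamma'\cap C_\rho(p_j)$ has at least $k$ essential (noncontractible-in-$\partial C_\rho$) components; each such essential curve, projecting onto the circle $\partial B_\rho(p_j)\cap\Sigma$ with nonzero degree, must meet $P^{-1}(x)$ for every $x$ on that circle. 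Integrating/varying $\rho$ over $(0,r')$ shows $\#(P^{-1}(x)\cap\Gamma') \geq k$ for a.e.\ $x \in B_{r'}(p_j)\cap\Sigma$.

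Putting the two pieces together: $Area(\Gamma'\cap D_{r'}(p_j)) \geq (1-\varepsilon')\, k \, Area(B_{r'}(p_j)\cap\Sigma) \geq k\, Area(\Sigma\cap D_{r'}(p_j)) - \varepsilon$, where we use that $Area(\Sigma\cap D_{r'}(p_j))$ is comparable to $Area(B_{r'}(p_j)\cap\Sigma)$ with error controlled by the leaf geometry, and $t$ is taken small enough (depending on $\Sigma$ and $\varepsilon$ only, via $r$ which is fixed) that the total additive error is at most $\varepsilon$. I expect the main obstacle to be the careful bookkeeping that the Jacobian and area-comparison errors are genuinely uniform — i.e.\ depend only on $\Sigma$, $\varepsilon$ (and the fixed $r$), not on the surface $\Gamma$ — which follows from property (4) of the squeezing map (graphical smooth convergence of $P_t(S)$ to its projection) but must be invoked in a way that is uniform over the relevant family of surfaces; a secondary technical nuisance is handling the measure-zero set of $\rho$ where transversality fails, which is routine via Sard's theorem.
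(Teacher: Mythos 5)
Your proof is correct and follows essentially the same route the paper intends (the paper only gestures at the argument via the radial-isotopy remark preceding the lemma together with the coarea formula): for a.e.\ $\rho\in(0,r')$ the cylinder $C_\rho(p_j)$ carries at least $k_{ess}$ essential curves, each of which meets every fiber of $P$ over $\partial B_\rho(p_j)\cap\Sigma$, and the squeezing makes the projection almost area-nonincreasing on $\Gamma'$. The one slip is the direction of the Jacobian estimate: what the area formula requires is the upper bound $J(P|_{\Gamma'})\leq 1+\varepsilon'$, giving $\int_{\Sigma}\#\bigl(P^{-1}(x)\cap\Gamma'\bigr)\,d\mathcal{H}^2(x)=\int_{\Gamma'\cap D_{r'}(p_j)}J(P|_{\Gamma'})\,d\mathcal{H}^2\leq(1+\varepsilon')\,Area(\Gamma'\cap D_{r'}(p_j))$, not a lower bound $J\geq 1-\varepsilon'$ (on nearly vertical pieces $J$ is close to $0$, which only helps); the bound $\|dP\|\to 1$ on thin neighborhoods of $\Sigma$ is uniform in the surface, and the multiplicative error converts into the additive $\varepsilon$ since $k\,Area(\Sigma\cap D_{r'}(p_j))$ is controlled by the area of the fixed surface $\Gamma$.
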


\begin{proof}
For every $s>0$, picking $t>0$ sufficiently small we have
$\Gamma' \subset \phi(\Sigma \times [-s,s])$.
By the coarea formula we have
\begin{equation}
\mbox{Area}(\G' \cap D_{r'}(p_j^i)) \geq c_0(s) \int_{\rho=0}^{r'} \mbox{Length}(\G' \cap C_{\rho}(p_j^i)) d \rho,
\end{equation}
where $c_0(s) \rightarrow 1$ as $s \rightarrow 0$.

Let $R \subset (0,r')$ denote the set of values of $\rho$ 
for which the intersection $\gamma_\rho=\G' \cap C_{\rho}(p_j^i)$ is transverse.
Suppose $\gamma_{\rho'}$ has $k<k_{ess}(\G,i,j,r')$
connected components that are non-contractible in $ C_{\rho'}(p_j^i)$
for some $\rho' \in R$. Then there exists $\delta>0$, such that 
for all $\rho \in (\rho'-\delta, \rho'+\delta)$
the intersection $\G' \cap C_{\rho}(p_j^i)$ has $k$
non-contractible components. Consider an ambient isotopy
that radially expands the region 
$D_{\rho'+\delta}(p_j^i) \setminus D_{\rho'-\delta}(p_j^i)$
to $D_{r'-\delta} \setminus D_{\delta}$ and coincides with the identity
outside of $D_{r'}(p_j^i)$. Applying
this isotopy to $\G'$ we obtain an embedded surface $\G''$
with $\G'' \cap  C_{\hat{r}/2}(p_j^i) $ having $k$ non-contractible
components, contradicting the definition of $k_{ess}(\G,i,j,r')$.
It follows that $\gamma_{\rho'}$ has at least $k_{ess}(\G,i,j,r')$
non-contractible components for all $\rho' \in R$. We also have that
every non-contractible curve $\gamma$ in the cylinder $C_{\rho}$
satisfies \begin{equation}\mbox{Length}(\gamma) \geq c_1(s)\mbox{Length}(\Sigma \cap C_\rho)\end{equation}
with $c_1(s)\rightarrow 1$ as $s \rightarrow 0$.

Since $[0,r'] \setminus R$ has measure zero and applying the coarea
formula again we obtain
 \begin{align}\label{areahere}
 \mbox{Area}(\G \cap B_{r'}(p_j^i)) & \geq c_2(s) k_{ess}(\G,i,j,r')
 \int_{\rho=0}^{r'} \mbox{Length}(\Sigma \cap C_{\rho}(p_j^i)) d \rho \\
& = c_2(s) k_{ess}(\G,i,j,r') \mbox{Area}(\Si \cap D_{r'}(p_j^i)),
\end{align}
with $c_2(s)=c_0(s)c_1(s) \rightarrow 1$ as $s \rightarrow 0$. Thus by choosing $t$ sufficiently small, \eqref{areahere} gives the desired conclusion.
\end{proof}


\begin{figure} 
   \centering	
	\includegraphics[scale=0.75]{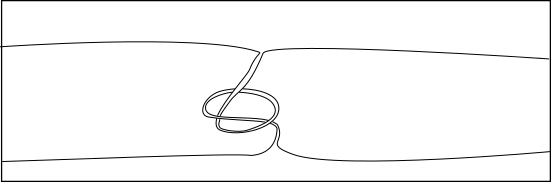}
	\caption{Two graphical sheets joined by a knotted neck.
	There is a homotopy, but no isotopy
	pushing the surface into the boundary of the cell, so $k_{ess} = 2$.}
	\label{fig:essential}
\end{figure}








\subsection{Local operations in the proof of Proposition \ref{stacking2}}

%

The following is a version of the key blow-down blow-up lemma 
in Step 2 in the proof of Lemma 7.6 in Colding--De Lellis \cite{C&DL}.  

A set $U$ is called star-shaped with respect to $x \in U$
if minimizing geodesics between points of $U$ and $x$ lie in $U$.

\begin{lemma}[Blow-down blow-up lemma]\label{radial}
Suppose $x \in U \subset B_r(x)$, $r \in (\hat{r}/2, \hat{r})$, where $U$
is an open set star-shaped with respect to $x$,
and suppose ${\G_t}$ 
is an isotopy with surgeries of surfaces and $\{ \gamma_t = \G_t \cap \partial U\}$
is an isotopy of a collection of closed curves.
Then for every $\eta>0$ there exists an isotopy with surgeries
${\overline{\G}_t}$, such that:
\begin{enumerate}
\item $\overline{\G}_0=\G_0$ and $\overline{\G}_1=\G_1$;
\item $\overline{\G}_t \setminus U =\G_t \setminus U$;
\item $\mbox{Area}(\overline{\G}_t) \leq \mbox{Area}(\G_t \setminus U) + 
\max\{\mbox{Area}(\G_0 \cap U), \mbox{Area}(\G_1 \cap U) \}+
r \mbox{Length}(\G_t \cap 
\partial U)+ \eta$ for $t \in [0,1]$. 
\end{enumerate}
\end{lemma}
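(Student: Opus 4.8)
The plan is to prove the Squeezing Lemma by an explicit construction that interpolates between three configurations of $\G_t \cap U$: at the two endpoints we keep the original slices $\G_0 \cap U$ and $\G_1 \cap U$, and in the middle portion of the parameter interval we replace $\G_t \cap U$ by the \emph{cone} over the boundary curves $\gamma_t$ with vertex $x$. The star-shapedness of $U$ guarantees such a cone sits inside $U$, and the area of a cone over a curve of length $L$ in a set of radius $r$ is at most $rL$ (up to the Euclidean comparison built into the choice of $r$), which is where the $2r\,\mathrm{Length}(\G_t\cap\partial U)$ term comes from. So the three phases are: (i) $t\in[0,1/3]$, shrink $\G_0\cap U$ down toward $x$ along minimizing geodesics (a ``blow-down''), ending at something collapsed near $x$ together with the cone over $\gamma_{1/3}$; (ii) $t\in[1/3,2/3]$, with the interior being exactly the cone over the moving family $\gamma_t$, so the interior area is controlled purely by $\mathrm{Length}(\gamma_t)$; (iii) $t\in[2/3,1]$, reverse of phase (i) but now ``blowing up'' to recover $\G_1\cap U$. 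Throughout, outside $U$ we do nothing, so $\overline\G_t\setminus U=\G_t\setminus U$, giving property (2), and properties (1) hold by construction at $t=0,1$.

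First I would make precise the blow-down map: use the geodesic retraction $r_s:U\to U$, $r_s(\exp_x(v))=\exp_x((1-s)v)$, which is well-defined and smooth because $U$ is star-shaped and $r<\mathrm{convexity\ radius}$; it is area non-increasing up to the bilipschitz factor from Section~\ref{r}, hence $\mathrm{Area}(r_s(\G_0\cap U))\le \mathrm{Area}(\G_0\cap U)$ essentially (absorbing the $1.01$ factor into ``$2r$'' instead of ``$r$''). To glue phase (i) to phase (ii) I have to be slightly careful: rather than literally collapsing $\G_0\cap U$ to a point and then expanding a cone, I would at each $s$ take the surface $r_s(\G_0\cap U)$ and attach the annular ``collar'' swept by $\gamma_\tau$ for $\tau$ ranging over a short parameter sub-interval, i.e. build the cone incrementally so the interior surface is always embedded and varies continuously in $\mathbf{F}$; alternatively, since the lemma only asks for an isotopy (not an area bound during a collapse to a point surface), I can keep $\G_0\cap U$ fixed in the region very near $x$ and only cone off the part near $\partial U$. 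The cleanest bookkeeping is: during $[0,1/3]$ deform $\G_t\cap U$ (interpolating the boundary data from $\gamma_0$ to $\gamma_{1/3}$ by a fixed homotopy, then rescaling) so that its area never exceeds $\mathrm{Area}(\G_0\cap U)+2r\,\mathrm{Length}(\gamma_t\cap\partial U)$, which one arranges by pushing inward from the boundary. The symmetric statement holds on $[2/3,1]$ with $\G_1$.

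The key estimate to verify in phase (ii) is that if $S_t$ denotes the cone from $x$ over $\gamma_t=\G_t\cap\partial U$, then $\mathrm{Area}(S_t)\le 2r\,\mathrm{Length}(\gamma_t)$: parametrize $S_t$ by $(\theta,u)\mapsto \exp_x(u\,\exp_x^{-1}(\gamma_t(\theta)))$ for $u\in[0,1]$, compute the area element, bound the radial factor by $r$ and the angular factor by $|\dot\gamma_t|$ up to the bilipschitz constant, and integrate; the $\mathbf F$-continuity of $t\mapsto S_t$ follows from smoothness of $\gamma_t$ in $t$. One also needs the cone to be embedded, which may fail if $\gamma_t$ is a complicated multi-curve — but this is not required by the lemma statement, which allows $\overline\G_t$ to be an immersed or singular family inside $U$; if genuine embeddedness is wanted one perturbs the cone slightly (this is the standard trick of replacing the cone vertex by a tiny disjoint union of vertices near $x$, one per component of $\gamma_t$, at negligible area cost).

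The main obstacle I anticipate is \emph{continuity across the three phase boundaries} $t=1/3$ and $t=2/3$: at $t=1/3$ I must match $r_s$-shrunk $\G_0\cap U$ with the cone over $\gamma_{1/3}$, and these have the same boundary trace on $\partial U$ but a priori very different interiors, so one cannot simply concatenate — one needs an honest isotopy (or isotopy-with-surgery in the sense of Section~\ref{first inter}, followed by Lemma~\ref{neck-pinching isotopy} to smooth it out) realizing the transition within the area budget. The way to handle this is to observe that $\G_0\cap U$, having fixed boundary, is homotopic rel boundary to the cone over $\gamma_{1/3}$ inside the ball $B(x,r)$ (a ball is simply connected, so any two surfaces with the same boundary are homotopic there), and then to run a small auxiliary application of the blow-down/blow-up idea \emph{at a finer scale} to convert this homotopy into an isotopy-with-surgeries whose area excess is at most $\mathrm{Area}(\G_0\cap U)+2r\,\mathrm{Length}(\gamma_{1/3})$; here the fact that we are allowed an additive rather than multiplicative error, and that the ``$\max$'' in property (3) lets us pay the full $\mathrm{Area}(\G_0\cap U)$ during the transition, is exactly what makes the argument close. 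This mirrors the blow-down blow-up step in Colding--De Lellis \cite[proof of Lemma 7.6, Step 2]{C&DL}, so I would organize the write-up to parallel that argument, citing it for the routine parts.
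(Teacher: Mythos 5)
Your construction has a genuine gap at its core: in phases (i)--(iii) you replace the interior $\G_t\cap U$ by (a perturbation of) the cone over the boundary curves $\gamma_t$, but the cone over a circle is a disk, so this replacement in general \emph{changes the topology of the surface} and therefore cannot be realized by an isotopy keeping $\overline{\G}_t\setminus U=\G_t\setminus U$. For instance, if $\G_t\cap U$ is an annulus joining two circles of $\gamma_t$, or two graphical sheets joined by a knotted neck as in Figure \ref{fig:essential}, no isotopy rel the exterior deforms it into a union of disks over the boundary circles --- that is exactly a neck-pinch surgery, and exactly the obstruction the essential multiplicity of Section \ref{first inter} is designed to record. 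Your proposed patch --- ``the ball is simply connected, so the two interiors are homotopic rel boundary, then convert the homotopy into an isotopy-with-surgeries at a finer scale'' --- both weakens the conclusion (Lemma \ref{radial} is used in the proof of Lemma \ref{stacking} precisely to produce an honest isotopy with the exact endpoints $\G_0$ and $\G_1$; Lemma \ref{neck-pinching isotopy} only returns an endpoint $\mathbf{F}$-close to $\G_1$, not equal to it) and is unjustified: for embedded surfaces, homotopic rel boundary does not imply isotopic rel boundary, and upgrading homotopies to isotopies is the hard content of Sections \ref{first inter}--\ref{second inter}, not something available inside this lemma. Embeddedness is also not optional here, since the whole Simon--Smith framework concerns isotopies of embedded surfaces.

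The paper's proof avoids the phase-matching problem entirely by never coning off the interior. In normal coordinates $E=\exp_x^{-1}$ one sets, for a cutoff function $\phi$ with $\phi(0)=1$ and $\phi(\tau)=\rho$ small,
$$\tilde{\G}_t=(\G_t\setminus U)\cup E^{-1}\bigl(\phi(t)\,E(\G_t\cap U)\cup C_{\phi(t)}(E(\gamma_t))\bigr)$$
for $t\in[0,\tau]$, keeps the scale at $\rho$ for $t\in[\tau,1-\tau]$, and symmetrically expands back on $[1-\tau,1]$; here $C_{s}(E(\gamma_t))$ is only the conical \emph{annular collar} between scales $s$ and $1$ over the current boundary curve. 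Thus every slice is obtained from $\G_t$ by a radial squeeze of its own interior, so its topology inside $U$ is untouched, the family is automatically a smooth isotopy (after corner smoothing) with $\overline{\G}_0=\G_0$, $\overline{\G}_1=\G_1$, and the area bound falls out: the rescaled interior has area at most about $\phi(t)^2\,Area(\G_t\cap U)$, which is negligible in the middle interval and, for $t\in[0,\tau]\cup[1-\tau,1]$ with $\tau$ small, is controlled by $Area(\G_0\cap U)$ resp.\ $Area(\G_1\cap U)$ by continuity of the given isotopy (this is the source of the $\max$ term), while the collar contributes at most about $1.1\,r\,Length(\gamma_t)\le 2r\,Length(\G_t\cap\partial U)$. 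Your intermediate variant (shrinking $r_s(\G_0\cap U)$ with an attached collar) is close in spirit, but the essential missing idea is to shrink the \emph{current} slice $\G_t\cap U$ at every time $t$, so that no transition through a cone --- and hence no change of interior topology --- ever occurs.
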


\begin{proof}
Let $E: B_r(x) \rightarrow T_xM$ denote the inverse of the exponential map. 
Let $ \tau \in (0,1)$
and $\rho \in (0,1)$ be small constants to be specified later.
Let $\gamma_t = \partial (\G_t \cap U)$.

Given a subset $S \subset \R^3$ let 
$$Cone(S) = \{ tx: x \in S, t \in [0,1]\}$$
denote the cone over $S$. Define
$$C_{t}(S) = Cone(S) \setminus Cone(t S) .$$
We will call $C_t(S)$ a ``conical collar" over $S$.
Let $\phi:[0,\tau] \to [\rho,1]$  be a smooth monotone function with 
$\phi(t)=1$ for $t \in [0, \frac{\tau}{3}]$ and $\phi(\tau)=\rho $ for $t \in [\frac{2\tau}{3},\tau]$.
Let
$$\tilde{\G}_t = (\G_t \setminus U) \cup E^{-1} (\phi(t) E(\G_t \cap U) \cup C_{\phi(t)}(E(\gamma_t))) \quad \text{ for $t \in [0,\tau]$},$$
$$\tilde{\G}_t = (\G_t \setminus U) \cup E^{-1} (\rho E(\G_t \cap U) \cup C_{\rho}(E(\gamma_t))) \quad \text{for $t \in [\tau,1-\tau]$},$$
$$\tilde{\G}_t = (\G_t \setminus U) \cup E^{-1} (\phi(1-t) E(\G_t \cap U) \cup C_{\phi(1-t)}(E(\gamma_t))) \quad \text { for $t \in [1-\tau,1]$}.$$
We smooth out the corners of $\{ \tilde{\G}_t\}$ to obtain a  family
$\{ \overline{\G}_t \}$ which is an isotopy with surgeries.

By our choice of radius $\hat{r}$ (see (\ref{iii}) in Section \ref{stacked_global}), the map 
$E$ is a bi-Lipschitz diffeomorphism
with $\frac{1}{1.01} \leq |dE| \leq 1.01$.
In particular, maps $E$ and $E^{-1}$
have Lipschitz constants bounded by $1.01$. The area of the cone over a curve of length $l$ that lies inside a ball of radius $r$ in $\mathbb{R}^3$ is bounded by $\frac{lr}{2}$.
Hence, we have 
$$\mbox{Area}(C_{\phi(t)}(E(\gamma_t))) \leq \frac{1.01r}{2} (1- \phi(t))\mbox{Length}(\gamma_t)<r \mbox{Length}(\gamma_t)$$

Assume we have chosen $\tau \in (0,1)$ sufficiently small so that for all $t\in [0,\tau]\cup[1-\tau, 1]$ we have
$$\mbox{Area}(\G_t \cap U) \leq \max_{i =0,1}\{\mbox{Area}(\G_i \cap U)\}
+ \eta/2$$

Observe that for any surface $S \subset B_r(p)$ and $t \in (0,1)$
we have 
\begin{equation}\label{-11ineq} \mbox{Area} \big(E^{-1}(tE(S))\big) < \mbox{Area}(S).
\end{equation}
Indeed, consider a curve $c(s) \subset \partial B_{r'}(x)$ and define a 1-parameter family of curves $c(t,s) = E^{-1}((1-t)E(c(s)))$. Using the Gauss Lemma we have
\begin{align*}
    \frac{d}{dt}g \big(\frac{\partial c}{\partial s},\frac{\partial c}{ \partial s}\big) &= 2g \big(\nabla_{\frac{\partial c}{\partial t}}\frac{\partial c}{\partial s}, \frac{\partial c}{\partial s} \big) \\
    &= -2g \big(\nabla_{\frac{\partial c}{\partial s}}\frac{\partial c}{\partial s}, \frac{\partial c}{\partial t} \big) <0.
\end{align*}
Using the coarea formula we obtain
\begin{align*}
    \mbox{Area}(E^{-1}(tE(S))) & = \int_0^r \mbox{Length} (E^{-1}(tE(S)) \cap \partial B_{r'(x)})dr' \\
    & < \int_0^r \mbox{Length} (S \cap \partial B_{r'(x)})dr'= \mbox{Area}(S)
\end{align*}
and this confirms (\ref{-11ineq}).
Besides, we can choose $\rho >0 $ sufficiently small so that $$ \mbox{Area}\big(E^{-1}(\rho E(\G_t \cap U))\big)< \eta/2$$
for all $t \in [\tau, 1- \tau]$.

Using the above estimates we obtain
\begin{align*}
    \mbox{Area}(\tilde{\G}_t) & < \mbox{Area}(\G\setminus U)+
    \max_{i=0,1}\{\mbox{Area}(E^{-1}(E(\G_i \cap U))) \}+r\mbox{Length}(\gamma_t) + \frac{\eta}{2}
\end{align*}
for $[0,\tau]\cup[1-\tau, 1]$, and
\begin{align*}
    \mbox{Area}(\tilde{\G}_t) & < \mbox{Area}(\G\setminus U)+r\mbox{Length}(\gamma_t)+ \frac{\eta}{2}
\end{align*}
for $t \in [\tau, 1- \tau]$. Slightly increasing the bound for the perturbed family
$\overline{\G}_t$ we obtain the desired result.
\end{proof}

The following Lemma allows us to deform 
an embedded surface in one cell $\Delta^i_j$ to make
it $k$-stacked, while controlling the area of surfaces in the deformation. Later, in Lemma \ref{stacking}, we will define a procedure for extending the stacking deformation from one cell to another.
It will be convenient to separate the conclusions of the next Lemma into two parts, I and II.
Part I mainly describes the effect of the deformation inside the $3$-dimensional cell $\Delta^i_j$ and will be used in the proof of Corollary \ref{coro:area decrease}. Part II describes how the deformation affects the surface outside of $\Delta^i_j$, which will be needed for the proof of Lemma \ref{stacking}.

Let $\delta>0$ be the fixed positive constant in Proposition \ref{stacking2} and let the other parameters depending on $\delta$ (namely $\hat{h}$, $\hat{r}$ and the 3-dimensional cells $\{ \Delta_j^i \}_{j=1}^{N_i}$) be chosen as in Subsections \ref{triangulation} and \ref{r}.

\begin{lemma}[Stacking in one cell] \label{decreasing k_ess}

Fix $i\in\{1,...,\nu\}$. Let $\tilde{\G}$ be an embedded surface contained in $\Om_{\hat{h}}(\Si_i)$.
     Fix a cell $\Delta_j^i$. Suppose that \begin{equation}\mbox{Area}(\tilde{\G} \cap B_{\hat{r}}(p_j^i)) \leq \frac{\delta}{50}.\end{equation}  I. For every $\eta>0$ there exists an isotopy 
    with surgeries $\{\tilde{\G}_t\}_{t\in [0,1]}$ so that:
\begin{enumerate}[label=(\alph*)]
\item $\tilde{\G}_0 = \tilde{\G}$;
\item $\mbox{Area}(\tilde{\G}_t) \leq \mbox{Area}(\tilde{\G}) + \delta/2$ for $t \in [0,1]$;
\item $\mbox{Area}(\tilde{\Gamma}_1 \setminus \Delta_j^i) \leq \mbox{Area}(\tilde{\Gamma} \setminus \Delta_j^i) + \eta$;
\item $\mbox{Area}(\tilde{\G}_1 \cap B_{\hat{r}}(p_{j'}^i)) \leq \mbox{Area}(\tilde{\G} \cap B_{\hat{r}}(p_{j'}^i)) + \eta$ for all $j' =1,..,N_i$;
\item $\tilde{\Gamma}_1$ is $k$-stacked in $\Delta_j^i$ 
for $k \leq k_{ess}(\tilde{\G},i,j,\frac{3}{4} \hat{r})$ and moreover $$\mbox{Area}(\tilde{\G}_1 \cap \Delta_j^i)\leq k\mbox{Area}(\Si_i \cap \Delta_j^i) +\eta.$$
\end{enumerate}
II. 
Fix any collection of strictly increasing numbers $h_1<...<h_k$ in $[0,\hat{h})$
and arbitrarily small $\xi'>0$ and $\eta'>0$.
Then $\{\tilde{\G}_t\}_{t\in [0,1]}$ can be assumed to satisfy the following in addition to (a)-(d).
There exists $r' \in (3\hat{r}/4, 99\hat{r}/100)$
and $\xi \in (0,\xi')$
with the property that $\partial B_{r'}(p^i_j)$ does not intersect the $0$-skeleton
of the triangulation $\{S^i_j\}$ of $\Si$, and there exists $\tau \in (0,1)$, such that 
\begin{enumerate}
    \item[(e)] $\tilde{\G}_1$ is  $((1-\tau)h_1,...,(1-\tau)h_k)$-stacked in $D_{r'-\xi}(p^i_j)$ for $k = k_{ess}(\tilde{\G},i,j,r')$;
    \item[(f)] $\mbox{Area}(\tilde{\G}_1 \cap (D_{r'}(p^i_j) \setminus D_{r'-\xi}(p^i_j))) < \eta'$;
    \item[(g)] $\tilde{\G}_1 \setminus D_{r'}(p^i_j)
    = P_t(\tilde{\G} \setminus D_{r'}(p^i_j))$.
\end{enumerate}
\end{lemma}
\begin{proof}
Without relabeling $\tilde{\G}$ we deform it by applying the squeezing map $P_t$ (see Section \ref{sec:squeezing}) for $t$ sufficiently close to $1$, so that
$D_{99\hat{r}/100}(p^i_j)\cap \tilde{\G} \subset B_{\hat{r}}(p^i_j)$.

By the co-area inequality
and the assumption $\mbox{Area}(\tilde{\G} \cap B_{\hat{r}}(p^i_j))
\leq \frac{\delta}{50}$
there exists
a radius $r' \in (3\hat{r}/4,99\hat{r}/100)$ with 
\begin{equation} \label{eqn:coarea1}
 \mbox{Length}( \tilde{\G} \cap C_{r'}(p^i_j)) \leq \frac{\delta}{10 \hat{r}}.
\end{equation}
We note here that $r'$ can be chosen so that
 $\partial B_{r'}(p^i_j)$ does not intersect the $0$-skeleton
of the triangulation $\{S^i_j\}$ of $\Si$, since
the set of values of $r'$ for which this is false has measure zero. (We will use this property in the proof of Lemma \ref{stacking}.)

We will construct an isotopy with surgeries $\{\tilde{\G}_t\}$
satisfying properties (a), (c) and (d) and such that
\begin{equation} \label{Ptau}
    \tilde{\G}_t \setminus D_{r'}(p^i_j) = P_{\tau(t)} (\tilde{\G} \setminus D_{r'}(p^i_j))
\end{equation} 
for some function $\tau(t)$. However, the areas of $\tilde{\G}_t$
for $t \in (0,1)$ will not necessarily be small.
To make sure that property (b) is satisfied we will apply the Blow-down blow-up lemma \ref{radial} as follows. 
Note that as $t \rightarrow 1$
the 1-dimensional varifolds $|P_t(\tilde{{\G}} \cap C_{r'}(p^i_j)))|$
converge to the projected varifold $P_{\sharp}(|\tilde{\G} \cap C_{r'}(p^i_j)))|)$. Thus we can apply the squeezing map $P_t$ to $\tilde{\G}$ (without relabeling it), so that (using \eqref{eqn:coarea1})
$$\mbox{Length}(P_s(\tilde{\G} \cap C_{r'}(p^i_j))) < \frac{\delta}{5\hat{r}}$$
for all $s \in (0,1)$. In particular, \eqref{Ptau} implies
\begin{equation} \label{eqn:lengthbound}
    \mbox{Length}( \tilde{\G}_t \cap C_{r'}(p^i_j)) =\mbox{Length}(P_{\tau(t)}(\tilde{\G} \cap C_{r'}(p^i_j)))  \leq \frac{\delta}{5 \hat{r}}.
\end{equation}

Since $D_{r'}(p^i_j)$ is star-shaped with respect to $p^i_j$ we can apply Lemma \ref{radial} and (\ref{eqn:lengthbound}) to find a new isotopy $\{ \overline{\Gamma}_t \}$ beginning at $\tilde{\G}$ and ending at $\tilde{\G}_1$ 
and satisfying for $0\leq t \leq 1$
\begin{equation}\label{better}
\mbox{Area}(\overline{\G}_t \cap D_{r'}(p^i_j)) \leq  \max\{\mbox{Area}(\tilde{\G}_0\cap D_{r'}(p^i_j)), \mbox{Area}(\tilde{\G}_1\cap D_{r'}(p^i_j))\} + \delta/5.
\end{equation} 
Since $\{\tilde{\Gamma}_t\}_{t\in [0,1]}$ satisfies property (d), we thus obtain for all $t\in [0,1]$
\begin{equation}\label{best}
\mbox{Area}(\overline{\G}_t \cap D_{r'}(p^i_j)) \leq \mbox{Area}(\tilde{\G}_0\cap D_{r'}(p^i_j))+\eta + \delta/5.
\end{equation} 
Without any loss of generality we can assume $\eta<\delta/10$ and item (b) follows.  
Hence, in our construction of $\{\tilde{\Gamma}_t\}_{t\in [0,1]}$ we do not need to control the area of $\tilde{\G}_t$
for $t \in (0,1)$. To prove part I of the Lemma we only need to make sure
that $\tilde{\G}_1$ satisfies estimates (c) and (d) and that (a) holds.

Let $k = k_{ess}(\tilde{\G}, p^i_j, r')$.
By Remark \ref{une remark} we have $k \leq k_{ess}(\tilde{\G}, p^i_j, 3\hat{r}/4)$.
By the definition of essential multiplicity, we can isotopically deform $\tilde{\G}$ inside
$D_{r'}(p^i_j)$
to an embedded surface $\tilde{\G}'$, so that 
$\gamma = \tilde{\G}' \cap C_{\hat{r}/2}(p^i_j)$ consists of exactly $k$ closed curves that do not bound a disk in
$C_{\hat{r}/2}(p^i_j)$.
For each component of $\gamma$ that is instead contractible in $C_{\hat{r}/2}(p^i_j)$, 
starting with the innermost one, we perform
a neck-pinch surgery along the disk filling it
in $C_{\hat{r}/2}(p^i_j)$.
In the end we obtain an embedded surface $\tilde{\G}''$,
whose intersection with $C_{\hat{r}/2}(p^i_j)$
is given by $k$ closed curves $\tilde{\gamma}_1,...,\tilde{\gamma}_k$  that do not bound a disk in  $C_{\hat{r}/2}(p^i_j)$.

We next isotopically deform $\tilde{\G}''$ (without relabeling it)
so that each curve $\tilde{\gamma}_l$
is at a constant height, namely
\begin{equation}\label{heightsdiff}\tilde{\gamma}_l = C_{\hat{r}/2}(p^i_j) \cap \phi(\Sigma, h_l')\end{equation} for some $h_l'>0$. Recall the map $\phi$ in \eqref{heightsdiff} was defined in (\ref{def phi}.
This deformation is constructed inductively as follows. Define the ``height function" \begin{equation}H: \Omega_{\hat{h}} \rightarrow [0,\hat{h}]\end{equation} by $$H(\phi(x,h')) = h'.$$
Let 
$$\tilde{h}_k = \max \{H(q): q \in \bigcup_l \tilde{\gamma}_l\}.$$ 
After relabelling if necessary, assume that
$\phi(x, \tilde{h}_k) \in \tilde{\gamma}_k$ for some $x$,
so $\tilde{\gamma}_k$ is the ``top-most" curve. Then for a small
$\eta'>0$ the subset
$A$ of $C_{\hat{r}/2}(p^i_j)$ between $\tilde{\gamma}_k$ and
$\phi(\Sigma, \tilde{h}_k+ \eta') \cap C_{\hat{r}/2}(p^i_j)$ is diffeomorphic to an annulus
and we can isotope $\tilde{\gamma}_k$ to $\phi(\Sigma, \tilde{h}_k+ \eta') \cap C_{\hat{r}/2}(p^i_j)$ in $A$. Since $\tilde{\G}''$ intersects 
$C_{\hat{r}/2}(p^i_j)$ transversally this isotopy can be extended to an isotopy of 
$\tilde{\G}''$ in the interior of $D_{r'}(p^i_j)$. We then similarly deform the next intersection curve 
$\tilde{\gamma}_{k-1}$. 
In the end we obtain that $\tilde{\G}''$ intersects $C_{\hat{r}/2}(p^i_j)$
in $k$ closed curves at a constant height each as desired.

Given any increasing sequence $h_1 < ... < h_k$ in $[0,\hat{h})$ we can isotopically adjust the heights, so that \begin{equation}\label{decomp}\G'' \cap C_{\hat{r}/2}(p^i_j) = \bigsqcup_{l=1}^k \phi (\Si \cap C_{\hat{r}/2}, (1-\tau)h_l)\end{equation}
for some $\tau \in (0,1)$ with $(1-\tau)h_k< \tilde{h}_k + \eta'$. The decomposition \eqref{decomp} will guarantee that property (e) from Part II holds.

Next we deform $\tilde{\G}''$
so that it is $k$-stacked inside $D_{\hat{r}/2}(p^i_j)$. 
After a small perturbation in the interior of $D_{\hat{r}/2}(p^i_j)$ we can assume
that $\tilde{\G}''$ intersects
$\phi(\Sigma, (1-\tau) h_l) \cap D_{\hat{r}/2}(p^i_j)$
transversally for $l=1,..,k$.
Starting with the innermost connected component $\alpha$ of
$\tilde{\G}'' \cap \phi(\Sigma, (1-\tau)h_k) \cap D_{\hat{r}/2}(p^i_j)$ we perform neck-pinch surgeries on the surface $\tilde{\G}''$ along the disk that the curve $\alpha$
 bounds in $\phi(\Sigma, (1-\tau)h_k) \cap D_{\hat{r}/2}(p^i_j)$ (recall that we do not need to worry about the area increase, as explained in the beginning of the proof).
Note that there exists at least one such curve $\alpha$ (namely, $\tilde{\gamma}_k = \phi(\Sigma, (1-\tau) h_k) \cap C_{\hat{r}/2}(p^i_j)$).
After all such surgeries are completed we obtain a disk $\phi( \Si, (1-\tau) h_k) \cap D_{\hat{r}/2}(p^i_j)$ with boundary $\tilde{\gamma}_k$ as one of the connected components of $\G'' \cap D_{\hat{r}/2}(p^i_j)$. By radially contracting the three-ball $\phi(\Si \times ((1-\tau)h_k, \hat{h})) \cap D_{\hat{r}/2}(p^i_j)$ we remove all closed connected components of $\tilde{\G}''$ that lie above height
$(1-\tau)h_k$ in a collapse surgery (Definitions \ref{def:collapse} and \ref{def:collapse1}).
Applying the same procedure succesively to the other boundary components in order $\tilde{\gamma}_{k-1},...,\tilde{\gamma}_1$ we obtain at the end that $\tilde{\G}'' \cap D_{\hat{r}/2}(p^i_j)$ (not relabelled) is
$k-$stacked inside $D_{\hat{r}/2}(p^i_j)$.
In particular, by the choice of $\hat{r}$, it follows that 
$\tilde{\G}''$ is stacked in 
$\Delta_j^i$.

Next we would like to control the area of $\tilde{\G}''$ outside of $D_{\hat{r}/2}(p^i_j)$ to ensure items (d) and (f).
We do this by squeezing the part of the surface in the annular 
region $D_{r'}(p^i_j) \setminus D_{\hat{r}/2}(p^i_j)$
into a small neighborhood
of $C_{r'}(p^i_j)$ and apply the squeezing map $P_t$,
so that the area in the neighborhood
can be made arbitrarily close to zero. Let us give the details.

Extend polar coordinates $(\rho, \theta ) \in
B_{r'}(p^i_j) \cap \Si$ to 
 cylindrical coordinates 
$\phi(\rho, \theta, h')$, $\rho \in [0,r']$,
$\theta \in S^1$, $h' \in [0,\hat{h}]$ on
$\phi(B_{r'}(p^i_j), [0,\hat{h}])$.
After a small perturbation we can assume
that for all $h' \in [0,\hat{h}]$ and all $ \theta \in S^1$ the line segment
$$l_{h', \theta} = \{\phi(\rho, \theta,h'): \rho \in [\hat{r}/2,r'] \}$$ intersects $\tilde{\G}''$ at finitely many points. Let $$K = \max \# \{ l_{h', \theta} \cap \tilde{\G}'' : h' \in [0,\hat{h}], \theta \in S^1\}.$$
For $s \in [\hat{r}/2, r' ]$, let $f_{s}(\rho)$ denote a continuous piecewise linear
function that scales the interval $[0, \hat{r}/2]$
to $[0, s]$ and the interval $[\hat{r}/2, r' ]$
to $[s, r']$.
Consider the radial map $\Phi_s$ defined in these cylindrical 
coordinates by 
\begin{equation}
\Phi_s(\phi(\rho, \theta, h')) := \phi(f_s(\rho), \theta, h').
\end{equation}
Observe that as $s \rightarrow r'$ we have that
\begin{equation}
\limsup_{s \rightarrow r'}\mbox{Area}(\Phi_s(\phi([\hat{r}/2,r'], S^1, [0,\hat{h}]) \cap \tilde{\G}''))\leq K \mbox{Area}(\phi(r', S^1, [0,\hat{h}])).
\end{equation} 
Applying $\Phi_s$ to $\tilde{\G}''$ for
$s=r'- \xi$ and $\xi \in (0, \xi')$ sufficiently small, and then applying the
squeezing map 
$P_\tau$ for $\tau$ sufficiently close to $1$
we can guarantee that the surface is $k$-stacked in $D_s(p^i_j)$, the part of the surface in  $D_{r'}(p^i_j)\setminus D_s(p^i_j)$
has area less than $\min\{\eta', \eta/5 \}$, and outside of $D_{r'}(p^i_j)$ the surface coincides
with $P_\tau( \tilde{\G})\setminus D_{r'}(p^i_j)$ for some value of $\tau \in (0,1)$. This implies properties (d), (f) and (g), and thus concludes the proof of the lemma.  
\end{proof}


We continue to use the notations of Subsection \ref{essmulti}.
We record the following corollary of Lemma \ref{decreasing k_ess} which will be useful later for the proof of Proposition \ref{movehandles} in the next section:
\begin{coro} [\mbox{Area} decrease from neck opening] \label{coro:area decrease}
Fix $i\in\{1,...,\nu\}$. Let $\tilde{\G}$ be an embedded surface contained in $\Om_h(\Si_i)$.
     Fix a cell $\Delta_j^i$ and $r' \in (\hat{r}/2,\hat{r})$.

      Suppose that $\tilde{\G}$ intersects $C_{\hat{r}/2}(p_j^i)$ transversally, and that $\tilde{\G} \cap C_{\hat{r}/2}(p_j^i)$ has  exactly $\hat{k}>1$ connected components $\gamma_1,...,\gamma_{\hat{k}}$ which do not bound a disk inside $C_{\hat{r}/2}(p_j^i)$. Suppose that for some $u\neq v\in \{1,...,\hat{k}\}$, there is an embedded curve $\alpha_1:[0,1]\to \tilde{\G}\cap D_{\hat{r}/2}(p_j^i)$ with endpoints $\alpha_1(0)\in \gamma_u$, $\alpha_1(1)\in \gamma_v$, and that $\alpha_1$ is isotopic (with fixed endpoints) inside $D_{r'}(p_j^i)$ to an embedded curve $\alpha_2:[0,1]\to  C_{\hat{r}/2}(p_j^i)$.  Then
    $$k_{ess}(\tilde{\G},i,j,r')<\hat{k}.$$

    In particular, under the above assumptions, for any small enough $\eta'>0$, if $\mbox{Area}(\tilde{\G} \cap B_{\hat{r}}(p_j^i)) \leq \frac{\delta}{50}$ and if \begin{equation}\label{aa bd}
    \mbox{Area}(\tilde{\G}\cap \Delta^j_i)> \hat{k} \mbox{Area}(\Si_i\cap \Delta^j_i) - \eta',
    \end{equation}
    then there exists an isotopy 
    with surgeries $\{\tilde{\G}_t\}_{t\in [0,1]}$ so that:
\begin{enumerate}[label=(\alph*)]
\item $\tilde{\G}_0 = \tilde{\G}$;
\item $\mbox{Area}(\tilde{\G}_t) \leq \mbox{Area}(\tilde{\G}) + \delta/2$ for $t \in [0,1]$;
\item $\mbox{Area}(\tilde{\Gamma}_1) \leq \mbox{Area}(\tilde{\Gamma}) - \frac{c}{2}$ where $c$ is defined in (\ref{def of c}).
\end{enumerate}   
\end{coro}
\begin{proof}

\begin{figure} 
   \centering	
	\includegraphics[scale=0.8]{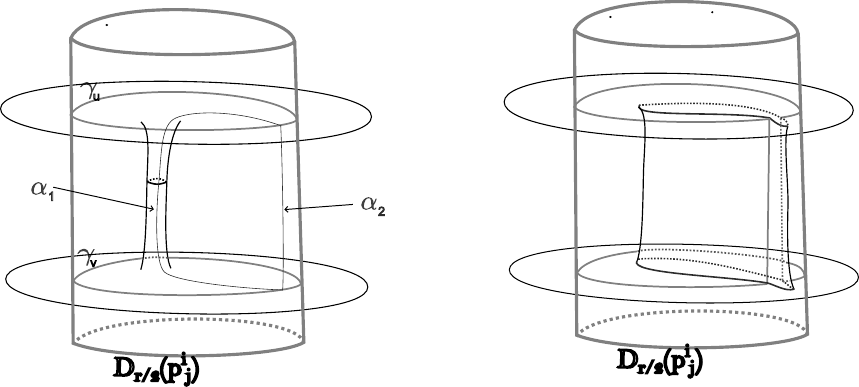}
	\caption{Deformation reducing the number of connected components of $\tilde{\G} \cap C_{\hat{r}/2}(p_j^i)$ non-contractible in $C_{\hat{r}/2}(p_j^i)$.}
	\label{fig:neckopening}
\end{figure}

To give some intuition about the construction in this proof, the model case for this corollary is as follows: we consider a catenoidal neck inside a cylindrical open region 
$D_{\hat{r}/2}(p_j^i)$
which intersects the boundary cylinder 
$C_{\hat{r}/2}(p_j^i)$
along two curves $\gamma_u,\gamma_v$ not bounding disks in 
$C_{\hat{r}/2}(p_j^i)$,
and we consider a connected embedded curve $\alpha_1$ inside this surface connecting $\gamma_u$ to $\gamma_v$ (see Figure \ref{fig:neckopening}).
We deform the neck by pulling the curve $\alpha_1$ outside of the cylinder and moving the surface along, and at the end of the deformation, the new surface intersects 
$C_{\hat{r}/2}(p_j^i)$
along a single closed connected curve bounding a disk in 
$C_{\hat{r}/2}(p_j^i)$.

Recall that we defined the projection $P:\Omega_1\to \Sigma$ in (\ref{proj P def}).
Under our assumptions, the union of $\alpha_1$ and $\alpha_2$ bound an embedded disk $\mathcal{D}$ inside $D_{\hat{r}/2}(p_j^i)$. 
After deforming $\tilde{\G}$ with an isotopy inside $D_{r'}(p_j^i)$ which preserves the topology of $\tilde{\G}\cap D_{\hat{r}/2}(p_j^i)$, we can assume that $\mathcal{D}$ is ``vertical'' in the sense that $P(D)$ is a 1-dimensional segment in $\Si$ and 
$P(\alpha_2([0,1]))$ is a point in $\Si$.

 We slightly perturb $\tilde{\G}$ (without renaming it) so that $\mathcal{D}$ intersects $\tilde{\G}$ transversally up to the boundary. Since $\tilde{\G}$ is embedded, 
 \begin{equation}\label{topol of inter}
 \mathcal{D}\cap \tilde{\G} = \alpha_1([0,1]) \cup \bigcup_{k=1}^{k_0} \beta_k \cup  \bigcup_{m=1}^{m_0} c_m
 \end{equation}
 where $\{\beta_k\}$ are disjoint embedded arcs with endpoints in $\alpha_2([0,1])$, and $\{c_m\}$ are closed embedded curves bounding disks in the interior of $\mathcal{D}$.
 By ``pushing'' $\tilde{\G}\cap \mathcal{D}$ along $\mathcal{D}$ outside of $D_{\hat{r}/2}(p_j^i)$, we can deform $\tilde{\G}$  inside $D_{r'}(p_j^i)$ via an isotopy 
 $$\{\tilde{\G}_t\}_{t\in [0,1]}\text{ with } \tilde{\G}_0=\tilde{\G}$$
 supported in an arbitrarily thin $\mu$-neighborhood $N_\mu(\mathcal{D})$ of $\mathcal{D}$. 
We can ensure that for all time $t\in [0,1]$,
$$\tilde{\G}_t \setminus N_\mu(\mathcal{D})= \tilde{\G} \setminus N_\mu(\mathcal{D})$$
and  at time $t=1$, 
\begin{equation}\label{varn}
P^{-1}\big(N_{\mu/2}(P(\mathcal{D}))\big) \cap P\big(\tilde{\G}_1 \cap N_{\mu}(\mathcal{D}) \cap D_{\hat{r}/2}(p_j^i)\big) =\varnothing.
\end{equation}
Due to (\ref{topol of inter}) and (\ref{varn}), $\tilde{\G}_1 \cap C_{\hat{r}/2}(p_j^i)$
has strictly fewer than $\hat{k}$ connected components which do not bound disks inside $C_{\hat{r}/2}(p_j^i)$. We conclude that $k_{ess}(\tilde{\G},i,j,r')<\hat{k}$.


For any $\eta'\in (0,\frac{c}{100})$, the isotopy with surgeries posited in the second part of the statement is obtained from Lemma \ref{decreasing k_ess}.  Item (c) follows because of item (e) in Lemma  \ref{decreasing k_ess}, together with (\ref{aa bd}) and from the definition of $c$ in (\ref{def of c}) since
\begin{equation}\mbox{Area}(\Si_i \cap \Delta_j^i)>c.\end{equation}
\end{proof}

In the following lemma we prove that we can always either increase the number of cells in which the surface is stacked, or reduce its essential multiplicity in some cell.


Let $\delta>0$ be the fixed positive constant in Proposition \ref{stacking2} and let the other parameters depending on $\delta$ ($\hat{h}$, $\hat{r}$ and the 3-dimensional cells $\{ \Delta_j^i \}_{j=1}^{N_i}$) be chosen as in Subsections \ref{triangulation} and \ref{r}.

\begin{lemma} [Stacking extension]\label{stacking}
Fix $i\in\{1,...,\nu\}$. Let $\tilde{\G}$ be an embedded surface contained in $\Om_{\hat{h}}(\Si_i)$.
Assume that for some $l$, $1 \leq l< N_i$, the surface 
$\tilde{\G} \subset \Om_{\hat{h}}(\Si_i)$ is $(h_1, \dots, h_m)$-stacked 
in $\bigcup_{j \leq l-1} \Delta^i_j$
and satisfies $\mbox{Area}(\tilde{\G} \cap B_{\hat{r}}(p_j^i)) \leq \frac{\delta}{50}$
for all $0 \leq j \leq N_i$.
Fix $\eta \in (0, \delta)$. There exists $\tau \in (0,1)$ and 
an isotopy with surgeries $\{\tilde{\G}_t\}_{t\in [0,1]}$ so that:
\begin{enumerate}[label=(\alph*)]
\item $\tilde{\G}_0 = \tilde{\G}$;
\item $\mbox{Area}(\tilde{\G}_t) \leq \mbox{Area}(\tilde{\G}) + \delta/2$ for $t \in [0,1]$;
\item $\mbox{Area}(\tilde{\G}_1) \leq \mbox{Area}(\tilde{\G}) + \eta$;
\item $\mbox{Area}(\tilde{\G}_1 \cap B_{\hat{r}}(p_j^i)) \leq \mbox{Area}(\tilde{\G} \cap B_{\hat{r}}(p_j^i)) + \eta$ for all $j$;
\item $\tilde{\G}_1$ is stacked in $\Delta^i_l$;
\item Either (i) $\tilde{\G}_1$ is $((1-\tau)h_1, \dots,(1-\tau)h_m )$-stacked in 
$\bigcup_{j \leq l} \Delta_j^i$, or (ii) for some $1\leq j \leq l-1$
we have $k_{ess}(\tilde{\G}_1,i,j, 3\hat{r}/4)< m$.
\end{enumerate}
\end{lemma}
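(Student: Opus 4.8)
The strategy is a localized blow-down/blow-up argument in the single cell $\Delta^i_l$, using the Squeezing Lemma (Lemma \ref{radial}) to reshape $\tilde\G$ inside a ball around $p_l$ into a stack of graphs, while the Squeezing map $P_t$ of Subsection \ref{sec:squeezing} provides the global area control and the ``one-way'' monotonicity of essential multiplicity needed for item (f). First I would work in the ball $B_r(p_l)$, where by hypothesis $\area(\tilde\G\cap B_r(p_l))\le \delta/50$, so there is plenty of room to increase area slightly. Using the coarea formula pick $r'\in(3r/4,r)$ so that $\tilde\G$ meets $C_{r'}(p_l)$ transversally in a collection of curves of controlled total length, and so that the essential multiplicity $k:=k_{ess}(\tilde\G,l,r')$ is realized (up to isotopy supported in $D_{r'}(p_l)$) by a surface whose intersection with $C_{r/2}(p_l)$ has exactly $k$ essential curves and no inessential ones (inessential curves can be capped off by $\epsilon$-neck-pinch surgeries inside the cell, paying area at most $O(\text{length}^2)$).

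Next, inside the star-shaped set $U=D_{r'}(p_l)$ I would apply the Squeezing Lemma to the isotopy that radially contracts $\tilde\G\cap D_{r'}(p_l)$ toward the center: by property (3) of Lemma \ref{radial} the area along the way is bounded by $\area(\tilde\G\setminus U)+\max\{\area(\G_0\cap U),\area(\G_1\cap U)\}+2r\,L(\tilde\G\cap\partial U)$, and since $\area(\tilde\G\cap B_r(p_l))\le\delta/50$ and the curve length on $\partial U$ is small by the coarea choice, all three terms are under control, giving (b) with room to spare. After the blow-down, the surface near $p_l$ consists of $k$ nested ``almost-flat'' sheets; I would then apply the squeezing map $P_t$ (with $t$ close to $1$, chosen after $\tau$) to push these sheets into graphical position over $\Sigma_i$ inside $\Delta^i_l$, using property (4) of the squeezing map for graphical smooth convergence and property (3) for area non-increase; this yields (e), that $\tilde\G_1$ is stacked in $\Delta^i_l$. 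The heights of the new sheets will be $(1-\tau)h_1<\dots<(1-\tau)h_m$ on the previously-stacked region $\bigcup_{j\le l-1}\Delta^i_j$ provided the squeezing in $\Delta^i_l$ did not disturb the number of sheets there — and the only way it can is by opening a neck that reduces $k_{ess}(\tilde\G_1,j,3r/4)$ for some $j\le l-1$, which is precisely the alternative in (f). Items (c) and (d) are arranged by choosing $\tau$ and then $t$ small/close enough to $1$ that the final area excess, locally in each $B_r(p_j)$ and globally, is at most $\eta$: once the surface is in honest graphical stacked form over $\Delta^i_l$ and unchanged elsewhere, $P_t$ drives the area down to within $\eta$ of the stacked value, which differs from $\area(\tilde\G\cap B_r(p_l))$ by at most the small ``neck'' contribution we can make $<\eta$.

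The main obstacle I anticipate is bookkeeping the interaction between the local operation in $\Delta^i_l$ and the already-stacked region $\bigcup_{j\le l-1}\Delta^i_j$: the radial contraction in $D_{r'}(p_l)$ does not respect cell boundaries exactly, and a neck joining a sheet over $\Delta^i_l$ to the stack over an adjacent cell $\Delta^i_{j'}$ ($j'\le l-1$, which by the numbering convention shares an edge with some earlier cell) may get dragged in. Showing that whenever this \emph{does} happen it forces a genuine drop in $k_{ess}$ at that earlier cell — rather than some uncontrolled pathology — is the heart of the lemma; the point is that essential multiplicity is defined via isotopies fixing the surface outside $D_{r'}(p_j)$, so an isotopy that changes the stacking over $\Delta^i_{j'}$ without opening an essential neck there would contradict the minimality in the definition of $k_{ess}$. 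I would also need to be careful that the $\epsilon$-surgeries used to discard inessential curves and small components are legitimately $\epsilon$-neck-pinches/$\epsilon$-collapses in the sense of Subsection ``Isotopy with surgeries,'' i.e. supported in balls of radius $\le\epsilon$ with area change $\le 10\epsilon^2$, so that the resulting family is genuinely an isotopy with surgeries; this is routine given the isoperimetric estimates but must be stated. Finally, the order of quantifier choices matters: given $\eta$, first fix $r'$ and the surgeries, then $\tau$ (the contraction parameter in the Squeezing Lemma and the sheet-height scaling), then the squeezing-map parameter $t=t(\tau,\eta)$ close to $1$ — with this ordering all of (a)--(f) hold simultaneously.
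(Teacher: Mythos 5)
There is a genuine gap in the mechanism you propose for achieving the stacked form (e). Both of your main tools are topology-preserving: the blow-down in Lemma \ref{radial} merely rescales $\tilde{\Gamma}\cap D_{r'}(p_l)$ and attaches a conical collar, and the squeezing map $P_t$ is a diffeomorphism for $t<1$ whose graphical-convergence property (4) applies only to surfaces that are \emph{already} graphs over subsets of $\Sigma$. Hence neither operation can turn the part of the surface inside the cell into a disjoint union of graphical sheets when the sheets are joined by necks inside $D_{r'}(p_l)$ --- e.g. the knotted neck of Figure \ref{fig:essential}, which is exactly the situation the essential multiplicity is designed to measure and which cannot be removed by any isotopy supported in the cell. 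Your assertion that ``after the blow-down, the surface near $p_l$ consists of $k$ nested almost-flat sheets'' is therefore false in general, and the $\epsilon$-surgeries you invoke only for inessential curves and small components do not suffice. The actual proof must perform neck-pinch surgeries along the \emph{essential} curves: after capping the contractible components of $\tilde{\Gamma}\cap C_{r/2}(p_l)$ and collapsing closed pieces, one pinches along the $k$ essential curves at (nearly) constant heights, using the level sets $\phi(\Sigma,h')$ just above and below each height, and only then does $P_t$ give the area statements (c), (d) via Lemma \ref{essential}. Relatedly, Lemma \ref{radial} should not be applied ``to the radial contraction'': its role is to take the isotopy-with-surgeries you have already built (whose endpoint is the stacked surface with controlled final area) and retrofit the intermediate area bound (b) from the endpoint areas and the small length of $\tilde{\Gamma}\cap C_{r'}(p_l)$; as you use it, its output is a shrunk copy of the original surface, not progress toward (e).

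The second gap is the dichotomy (f), which you correctly identify as the heart of the lemma but do not prove: the appeal to ``minimality in the definition of $k_{ess}$'' is not an argument. What is needed is a case analysis on $k=k_{ess}(\tilde{\Gamma},p_l,r')$ versus the previous stacking number $m$. If $l>1$ and $k\neq m$, one checks $k<m$ (a previously stacked cell $\Delta^i_j$, $j<l$, meets $B_{r'}(p_l)$ by the choice of numbering), and stacking $\Delta^i_l$ with only $k$ sheets then forces $k_{ess}(\cdot,j,3r/4)<m$ at that earlier cell --- the second alternative. If $k=m$, one must arrange the new sheets at the \emph{same} heights as the existing stack and then reconcile the two stacks across the thin annular overlap region near $C_{r'}(p_l)$: over each piece $U_j=S^i_j\cap A$ the surface is $m$-stacked over two of the four boundary arcs but possibly not over the other two, and counting the essential intersections with $\phi(\partial U_j\times[-h,h])$ after further pinches yields exactly the alternative in (f) (either $m$ essential curves, giving the $(1-\tau)h_i$-stacked conclusion, or fewer, giving the drop in essential multiplicity). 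None of this bookkeeping appears in your outline, so as written the proposal establishes neither alternative of (f) nor, because of the first gap, conclusion (e).
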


\begin{proof}

In the process of constructing the desired
isotopy with surgeries we may need to apply the squeezing map
$P_{t}$ multiple times for $t = \tau_1, \dots, \tau_n$. In the end we set
$1-\tau = \prod (1-\tau_i)$. In particular,
the composition of all squeezing maps will deform $\phi(\Si \times h')$ into
$\phi(\Si \times (1-\tau)h')$.
Here, recall that the map $\phi$ is defined in (\ref{def phi}).

Recall the notation from Subsection \ref{essmulti}: for $p\in \Si$ we let
$D_{\rho}(p) = P^{-1}(B_{\rho}(p) \cap \Si) \subset \Omega_{\hat{h}}(\Si)$
and
$C_{\rho}(p) = P^{-1}(\partial B_{\rho}(p) \cap \Si) \subset \Omega_{\hat{h}}(\Si)$. Let $r' \in (3\hat{r}/4, \hat{r})$ be chosen as in part II of Lemma \ref{decreasing k_ess}
and let $k = k_{ess}(\tilde{\G}, i, l, r')$.
By Remark \ref{une remark} we have $k \leq k_{ess}(\tilde{\G}, i, l, 3\hat{r}/4)$.
We will apply the deformation $\{\tilde{\G}_t\}_{t 
\in [0,1]}$ from the proof of Lemma \ref{decreasing k_ess},
so that it has the properties (a)-(g) listed in the Lemma.
Now we consider several cases, depending on 
how the surface is stacked in other cells.

{\bf Case 1}: Suppose first that $l=1$ (that is, $\tilde{\G}$ 
has not been stacked yet in any other cell on $\Si_i$). Then  Lemma \ref{decreasing k_ess}
provides an isotopy with surgeries $\{\tilde{\G}_t\}$ such that 
$\tilde{\G}_1$ is $k$-stacked in $\Delta^i_1$ for
$k \leq k_{ess}(\tilde{\G}_1,i,1,3\hat{r}/4)$.

{\bf Case 2}: Suppose $l>1$ and $k \neq m$.
Recall that the numbering of $2$-simplices $\{S^i_j\}$
was chosen so that there exists a simplex
$S^i_{j'}$, $j'<l$, that shares a boundary edge
with $S^i_l$.
Hence, the 3-cell $\Delta^i_{j'}$
has non-empty intersection with $B_{r'}(p_l^i)$.
Since (by the assumption of the Lemma) $\tilde{\G}$ is $m$-stacked in $\Delta^i_{j'}$, it follows from the definition of $k_{ess}$ that
we must have $m>k$. By
applying the deformation of Lemma \ref{decreasing k_ess} to
$\tilde{\G}''$ we  
reduced the essential multiplicity of the surface
in $B_{3\hat{r}/4}(p_{j'}^i)$ (by part II of Lemma \ref{decreasing k_ess}).  Thus, the second case in item (f) is established.  

{\bf Case 3}: 
Suppose $l>1$ and $k = m$. Our goal is to either reduce the multiplicity in one of the cells, or align the heights, so that the surface is stacked in the union of cells
$\{\Delta^i_j\}_{j=1}^l$.
It follows that $\tilde{\G}''$
is $((1-\tau')h_1, \dots,(1-\tau')h_m )$-stacked in 
$\bigcup_{j < l} \Delta_j^i \setminus D_{r'}(p_l^i) $ for some $\tau' \in (0,1)$. By part II of Lemma \ref{decreasing k_ess}
we can assume that $\tilde{\G}''$ is
also $((1-\tau')h_1, \dots,(1-\tau')h_m )$-stacked  in $D_{r' - \xi}(p_l^i)$.

Consider the
very thin annular region 
$A = An(r', r' - \xi, p_l^i) \cap \Si$
for some $\xi> 0$, over which $\tilde{\G}''$
has not been stacked. We can assume that $\xi$
is sufficiently small, so that $A$ does not intersect the $0$-skeleton of the triangulation
$\{S^i_j \}$ of $\Si$.
Fix a $2$-cell $S_j$ for $j<l$ and let $U$ be a connected component of
$S^i_j\cap A$. We
want to deform $\tilde{\G}''$ so that it is 
stacked over $U$ as well. As in the proof of Lemma \ref{decreasing k_ess}, we can apply Lemma \ref{radial} to the ball $B_{r'}(p^i_l)$
to control areas of surfaces in the process of deformation. Hence, we do not need
to worry about the area increase during the deformations of $\tilde{\G}''$ 
inside $P^{-1}(U)$, we only need to make sure that the final surface we obtain has the desired area bound.

Observe that we can decompose
$\partial U$ as the union of $4$
arcs: $c_1$, $c_2$, $c_3$ and $c_4$
with $c_1 \subset \partial B_{r'}(p_l^i) \cap \Si$ and $c_3 \subset \partial B_{r' - \xi}(p_l^i) \cap \Si$; and with $c_2$, $c_3$ each contained in the
interior of an edge of $\partial S^i_j$. 
We have that $\tilde{\G}''$ is $m$-stacked over $c_1$ and $c_3$, but possibly not over $c_2$ and $c_4$. 
We perturb $\tilde{\G}''$ so that it intersects 
$\phi(\partial U \times [0,\hat{h}])$ transversally.
We squeeze every contractible connected component of
$\tilde{\G}''  \cap \phi(\partial U \times [0,\hat{h}])$,
starting with the innermost one, along
a disk contained in $\phi(\partial U \times [0,\hat{h}])$
and apply a neck-pinch surgery.
As in the proof of Lemma \ref{decreasing k_ess} we then remove all closed connected components
via a collapse surgery (Definitions \ref{def:collapse} and \ref{def:collapse1}). 
At the end of this process we obtain an embedded surface that has only 
essential intersections with $\phi(\partial U_j \times [0,\hat{h}])$.
If there are fewer than $m$ essential curves
it follows that we have reduced the essential multiplicity in $\Delta^i_j$ and thus the second alternative in item (f) occurs.
Otherwise, we can deform the surface $\G''$ straightening each segment $\tilde{\G}'' \cap \phi(c_i \times [0,\hat{h}])$, $i =2,4$, so that the height function $H$ is constant along the segment. As in the construction in the proof of Lemma \ref{decreasing k_ess} we can then
deform $\tilde{\G}''$, so that it is $m$-stacked over $U$. 
Proceeding this way for each cell $\Delta^i_j$, $j<l$
that intersects $B_{r'}(p_l^i) \cap \Sigma$
we obtain property (f).  


This concludes the proof.
\end{proof}
\vskip 5pt
\subsection{Proof of Proposition \ref{stacking2}.}

Recall that we chose $\overline{h}>0$ and
$\varepsilon>0$ to be small enough, so that Lemma 
\ref{contracting_hair2} is applicable and
that we fixed some $h\in (0, \overline{h})$.
Let $\delta>0$ be fixed.
By Lemma \ref{contracting_hair2} and using the squeezing map $P_t$ we can assume that $\G \subset \Om_{\hat{h}} \subset N_h(\Si)$, where $\hat{h} \in (0,1)$
was defined in Subsection \ref{r}. 

Recall the definition of $c$ from (\ref{def of c}).
Choose a positive $\hat{\delta} \leq \min \{ \frac{c}{10}, \frac{\delta}{200}\}$ and let $\delta_l = \frac{1}{2^l} \hat{\delta}$. 
Fix a connected component $\Si_i$ of $\Si$.
Recall that $N_i$ denotes the number of 2-simplices in the triangulation $\{S^i_j\}_{j=1}^{N_i}$ of the connected component $\Si_i$ of $\Si$, and that each $p^i_j$ is a point in $S^i_j$, see Subsection \ref{triangulation}.
We will construct a 
sequence of surfaces $\G^0,...,\G^n$ by successive isotopies with surgeries, such that:
\begin{enumerate}

\item $\G^0= \G \cap N_h(\Si_i)$ and 
$\G^n$ is stacked 
over $\Si_i$;


\item  for every $p_j^i$ (where $j=1,...,N_i$),  $$\mbox{Area} (\G^{k} \cap B_{\hat{r}}(p_j^i)) \leq \mbox{Area}(\G \cap B_{\hat{r}}(p_j^i))+ \sum_{l=1}^{N_ik}\delta_l < \frac{\delta}{50};$$ \label{eq local area bound}

\item $\mbox{Area}(\G^k) \leq \mbox{Area}(\G) - k\frac{c}{2}$ for $k = 0,..., n-1$; \label{areadown}

\item concatenating the $n$ isotopies with surgeries bringing $\Gamma^0$ successively to $\Gamma^n$ into a single isotopy with surgeries $\G_t$, we have:
\begin{equation}\mbox{Area}(\G_t) < \mbox{Area}(\G) + \frac{3\delta}{4}\mbox{ for } t \in [0,1].\end{equation}

\end{enumerate}

We now describe the construction of the sequence $\{ \G^k \}_{k=0}^n$ with the desired properties.

We set $\Gamma^0=\Gamma \cap N_h(\Si_i)$. By \eqref{ii} we have, for every $p^i_j$, $$\mbox{Area}(\G \cap B_{\hat{r}}(p_j^i)) \leq \frac{\delta}{100}.$$
Assume we have defined $\G^0, ..., \G^k$.  Let us describe how to obtain $\Gamma^{k+1}$ from $\Gamma^{k}$ for $k=0,...,n-1$.
We first apply Lemma \ref{decreasing k_ess} to $\G^k$ in
$\Delta^i_1$ with $\eta= \delta_{N_ik+1}$ to obtain an isotopy with surgeries from $\G^k$ to a new surface
$\G^k_1$ that is $m$-stacked in the cell $\Delta^i_1$ for some integer $m \geq 0$. We then apply Lemma \ref{stacking} to $\G^k_1$ in the cell $\Delta^i_2$ with $\eta= \delta_{N_ik+2}$ to obtain an isotopy with surgeries from $\G^k_1$ to a new surface $\G^k_2$. Now we have two possibilities: either $\G^k_2$ is $m$-stacked in $\Delta_1^i \cup \Delta_2^i$, or the second possibility in the part (f) of Lemma \ref{stacking} holds,
and we have that the essential multiplicity of the surface in $\Delta^i_1$ has dropped, $k_{ess}(\G^k_2, i, 1, 3 \hat{r}/4)< m$.

Suppose that the first case holds. We continue to successively apply Lemma \ref{stacking} in $\Delta_j^i$, $j=3,...,N_i$, to define an isotopy with surgeries from $\G^k$ to a surface $\G^k_j$ that 
is $m$-stacked in $\bigcup_{j'=1}^j \Delta_{j'}^i$. At each step we
set the constant $\eta$ that controls the area increase in Lemma \ref{stacking} to be $\delta_{N_ik+j}$.
If during each application of Lemma \ref{stacking} we never encounter
possibility (ii) of the part (f), then we have obtained 
an isotopy with surgeries from $\G^k$ to a surface $\G^k_{N_i}$
that is $m$-stacked in the neighborhood of $\Si_i$. We then set
$k+1 =n$ and $\G^n = \G^{k+1} = \G^k_{N_i}$ is the final surface in our construction. 
For each $p^i_{j'}$ by Lemma \ref{stacking} and the inductive assumption for $\G_k$ we have the local area bound
\begin{align*}
\mbox{Area} (\G^{k}_j \cap B_{\hat{r}}(p^i_{j'})) & \leq
\mbox{Area} (\G^{k} \cap B_{\hat{r}}(p^i_{j'}))
+\sum_{l=N_ik+1}^{N_ik+j} \delta_l \\
&  \leq
\mbox{Area} (\G \cap B_{\hat{r}}(p^i_{j'}))
+\sum_{l=1}^{N_ik+j} \delta_l < \frac{\delta}{50}.
\end{align*}

Let $\G'_t$ denote the isotopy with surgeries from $\G^k$ to $\G^n$ that we just constructed. By the inductive assumption for $\G^k$ and Lemma \ref{stacking} we have the global area bound:
\begin{align*}
\mbox{Area} (\G'_t) & \leq \mbox{Area}(\G^{k})  + \sum_{l=N_ik+1}^{N_i(k+1)}\delta_l + \delta/2 \\
& < \mbox{Area}(\G) + \sum_{l=1}^{N_i(k+1)}\delta_l+ \delta/2 \\
& <  \mbox{Area}(\G) +  \frac{3\delta}{4}\mbox{ for } t \in [0,1].
\end{align*}
We conclude that the properties of the sequence $\G^0, ..., \G^n$ are satisfied.

Now suppose that for some $j \leq N_i$ if we apply Lemma \ref{stacking} to $\G^k_{j-1}$ in $\Delta^i_j$ to obtain an isotopy with surgeries to 
a surface $\G^k_j$ we have that  
possibility (ii) of part (f)
holds. Then there exists a cell $\Delta_l^i$, $l < j$, such that $k_{ess}(\G^k_{j},i,l, \frac{3}{4}\hat{r})<m$. Applying Lemma \ref{decreasing k_ess} to $\G^k_j$ we obtain an isotopy with surgeries from $\G^k_j$
to what we denote $\G^{k+1}$, satisfying (using our inductive assumption for $\G^k$)
\begin{align*}
\mbox{Area} (\G^{k+1}) & \leq \mbox{Area}(\G^{k}_j) +\delta_{N_ik+j}-c \\
&  \leq \mbox{Area}(\G^{k}) + \sum_{l=N_ik+1}^{N_ik+j} \delta_l -c \\
& \leq \mbox{Area}(\G) - \frac{kc}{2} - \frac{c}{2}.
\end{align*}

Hence, we obtain the desired isotopy with surgeries from $\G^k$ to $\G^{k+1}$, proving the inductive step in our construction.
This finishes the construction of the sequence $\G^0, ..., \G^n$ with properties (1)--(4).

Observe that by property \eqref{areadown} sequence $\{\G^k\}_{k=1}^n$ must be finite. Hence, the above construction gives us an isotopy with surgeries to a surface that is stacked in the neighborhood of $\Si_i$. We apply this construction for each connected component $\Si_i$ of $\Si$. This finishes the proof of Proposition \ref{stacking2}.

\vspace{2em}

\section{Interpolation II: Finding and opening a neck} \label{second inter}

Let us recall the assumptions and notations of Theorem \ref{maininterpolation}. 
Let $( M_0,g)$ be a closed, oriented, Riemannian $3$-manifold.  Let $\overline{\G}\subset M_0$ be a strongly irreducible Heegaard surface. 
Let $M\subset M_0$ be a compact three-manifold with partitioned boundary $(\partial_0 M, \partial_1 M)$ and 
let $(W_0,W_1,\G)$ be a generalized Heegaard splitting of $(M,\partial_0 M, \partial_1 M)$ 
so that $\G$ is isotopic to $\overline{\G}$ in $ M_0$. Suppose further that each component of $\Si:=\partial M$ is a strictly stable minimal surface.  
Write 
$$\Sigma = \sqcup_{k=1}^\nu \Sigma_i,$$ where $\Sigma_i$
are the boundary components of $M$.
Let $\overline{h}, \varepsilon >0$,
be the constants given by Proposition \ref{stacking2}.



\begin{prop}[Stacking multiplicity larger than 1]\label{movehandles}
Fix any $h\in (0,\overline{h})$.
Let $\delta>0$. There exists 
$c = c(\delta, \G, \Sigma)>0$ with the following property.
Suppose that 
there exists an isotopy 
with surgeries $\{\Gamma_t\}_{t\in[0,1]}$
and a collection of integers $\{ k_i\geq 0 \}_{i=1}^{\nu}$, such that:
\begin{enumerate}
\item $\Gamma_0=\Gamma$,
\item $\mbox{Area}(\Gamma_t) < \mbox{Area}(\Gamma) + \delta$ for all $t\in [0,1]$,
\item $\Gamma_1 \subset N_h(\Sigma)$ and for each $i=1,2,...,\nu$, $\Gamma_1$ is 
$k_i$-stacked over $\Si_i$ in $N_h(\Si_i)$.
\end{enumerate}
Suppose $k_i>1$ for some $i\in \{1,...,\nu\}$.
Then there exists an isotopy with surgeries $\{\Gamma'_t\}_{t\in [0,1]}$ in $M$, beginning at $\Gamma$ so that
\begin{enumerate}
\item $\mbox{Area}(\Gamma'_t) < \mbox{Area}(\Gamma)+\delta$ for all $t\in [0,1]$,
\item $\mbox{Area}(\Gamma'_1) < \mbox{Area}(\Gamma)-\frac{c}{3}$,
\item $\Gamma'_1 \subset N_h(\Sigma)$.
\end{enumerate}

\end{prop}

The crucial point in the proposition above is the uniform area decrease in item (2) of the conclusion.
Proposition \ref{stacking2} together with Proposition \ref{movehandles}  yield our main interpolation result Theorem \ref{maininterpolation}:

\begin{proof}[Proof of Theorem \ref{maininterpolation}]

Choose $h \in (0,\overline{h}), \varepsilon >0$ to
be the constants from Proposition \ref{stacking2}.
Fix $\delta>0$ and let $c=c(\delta, \G, \Sigma)$ be defined as in Proposition \ref{movehandles}.
Let  $\hat{\delta}  \in (0, \min\{\delta, \frac{c}{100}\})$. 
Let $\mathcal{S}$ denote the set of surfaces $S$ with the following property:
there exists an isotopy with surgeries $\{S_t\}_{t\in[0,1]}$, such that:
\begin{enumerate}
\item $S_0=\Gamma$,
\item $\mbox{Area}(S_t) < \mbox{Area}(\Gamma) + \delta$ for all $t\in [0,1]$,
\item $S_1 = S \subset N_h(\Sigma)$ and $S_1$ is 
stacked in $N_h(\Si_i)$ for each $i=1,2,...,\nu$. 
\end{enumerate}
By Proposition \ref{stacking2} 
the set $\mathcal{S}$ is non-empty.

Let $\G' \in \mathcal{S}$ be an embedded surface satisfying
\begin{equation} \label{eq:c/3}
    \mbox{Area}(\G') < \inf_{S \in \mathcal{S}} \mbox{Area}(S) + \hat{\delta},
\end{equation}
We claim that $\G'$ is $k_i$-stacked in the neighborhood 
of each boundary component $\Sigma_i$ with $k_i=1$ or $0$.
Indeed, if $k_i>1$, then by Proposition \ref{movehandles} and (\ref{eq:c/3}) there exists an isotopy with surgeries from $\G$ to an embedded surface
$\G''$, so that
\begin{itemize}
    \item the areas of surfaces in the deformation are less than $\mbox{Area}(\G)+ \delta$;
    \item $\mbox{Area}(\G'')< \mbox{Area}(\G') - \frac{c}{3}<
    \inf_{S \in \mathcal{S}} \mbox{Area}(S) - \frac{c}{4}$.
\end{itemize}
Now we can apply Proposition \ref{stacking2} to the surface $\G''$ to define an isotopy with surgeries from 
$\G''$ to $\G'''$, so that 
\begin{itemize}
    \item the areas of surfaces in the deformation are less than $\mbox{Area}(\G'')+ \hat{\delta}$;
    \item $\G'''$ is stacked in the neighborhood of each $\Si_i$.
\end{itemize}
Concatenating these isotopies with surgeries we obtain an isotopy with surgeries from $\G$ to a stacked surface $\G'''$. It follows that $\G''' \in \mathcal{S}$ and
$$\mbox{Area}(\G''') < \inf_{S \in \mathcal{S}} \mbox{Area}(S) - \frac{c}{100},$$
a contradiction. We conclude that $\G'$ is $k_i$-stacked in the neighborhood of each $\Si_i$ with $k_i = 1$ or $0$.

By Lemma \ref{neck-pinching isotopy} there exists an isotopy 
from $\G$ to an embedded surface $\G_1$ arbitrarily close in the $\mathbf{F}$-topology
to $\G'$. Applying the squeezing map $P_t$ (see (\ref{P_t def})) if necessary,
it follows that $\G_1$ will satisfy the conclusions of 
Theorem \ref{maininterpolation}.
\end{proof}

The rest of this section is devoted to the  proof Proposition \ref{movehandles}.

\subsection{Preliminaries on the light bulb theorem.}
Proposition \ref{movehandles} tells us that if we can deform the surface $\Gamma$ to a surface that looks like several copies of the connected components of the boundary with thin necks in between, then we can use one of these necks to reduce the area of the surface by a definite amount. One of the 
difficulties in proving it is that if the neck is knotted as in the Fig. \ref{fig:essential}, it may be impossible to ``open up" this neck by a local deformation. To prove existence of a global deformation that will unknot such necks we will need the light bulb theorem in topology, which we recall:

\begin{prop} [Light bulb theorem \cite{Rolfson}] \label{lb}
Let $\alpha(t)$ be an embedded arc in  $\mathbb{S}^2\times [0,1]$ so that $\alpha(0)=\{x\}\times\{0\}$ and $\alpha(1)=\{y\}\times\{1\}$ for some $x,y\in\mathbb{S}^2$.  Then there is an isotopy $\phi_t$ of $\alpha$ so that 
\begin{enumerate}
\item $\phi_0(\alpha)=\alpha$ 
\item $\phi_1(\alpha)$ is the vertical arc $\{x\}\times [0,1]$.
\end{enumerate}
\end{prop}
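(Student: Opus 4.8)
The plan is to show directly that $\alpha$ is ambient isotopic to the vertical arc $v := \{x\}\times[0,1]$, with the entire difficulty concentrated in upgrading a \emph{homotopy} to an \emph{isotopy} via the light bulb trick. First I would perform a preliminary ambient isotopy, supported in a collar $\mathbb{S}^2\times(1-\epsilon,1]$, dragging the endpoint $(y,1)$ along a path in $\mathbb{S}^2\times\{1\}$ until it reaches $(x,1)$; after this we may assume $y=x$, so that $\alpha$ and $v$ are both embedded arcs from $(x,0)$ to $(x,1)$. Since $\pi_1(\mathbb{S}^2\times[0,1])=0$, the arcs $\alpha$ and $v$ are homotopic rel endpoints. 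By a standard general position argument a homotopy between them can be chosen so that $F(\cdot,s)$ is an embedded arc for all but finitely many parameters $s$, at each of which the arc undergoes a single transverse \emph{crossing change} supported in a small ball. Hence $\alpha$ is obtained from $v$ by a finite alternation of ambient isotopies and crossing changes, and it suffices to realize each individual crossing change by an ambient isotopy.

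Realizing a crossing change by an ambient isotopy is the heart of the matter --- this is exactly the light bulb trick. The crossing to be changed lies in a small ball $B$ through which two sub-arcs (strands) $p$ and $q$ of the current arc pass, and the move exchanges which strand is in front. Tracing the arc out of $B$, at least one strand --- say $p$ --- has an end that reaches an endpoint of the arc without ever re-entering $B$ (otherwise the arc would be a closed curve); and since both endpoints lie on the boundary $2$-spheres $\mathbb{S}^2\times\{0\}$ and $\mathbb{S}^2\times\{1\}$, we may take this endpoint to be $(x,1)$ and assume that $\alpha$ is radial in a small half-ball $U\ni(x,1)$ with $U\cap\alpha$ equal to just that radial segment. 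Using the sub-arc of $\alpha$ that runs from $B$ to $(x,1)$ as a guide, I would then drag the offending strand --- inside a thin tube about that guide sub-arc, which therefore meets $\alpha$ only along the guide --- out to $U$, sweep it around the free end of the radial segment inside $U$ (possible precisely because $\mathbb{S}^2\times\{1\}$ is an unobstructed boundary $2$-sphere), and bring it back; the net effect is exactly the desired crossing change. Composing all of these maneuvers with the general-position ambient isotopies produces the isotopy $\phi_t$ with $\phi_0(\alpha)=\alpha$ and $\phi_1(\alpha)=v$.

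I expect the main obstacle to be the careful bookkeeping in this last maneuver: one must verify that the slide of the strand to the free end, around it, and back is a genuine ambient isotopy and that it changes exactly the one crossing in $B$. The slide stays in a thin tube about the guide, hence disjoint from the rest of $\alpha$ except along the guide itself, and the essential move takes place in $U$, where $\alpha$ is a plain radial segment in an unobstructed half-ball --- which is exactly where the hypothesis that the endpoints of $\alpha$ sit on the $\mathbb{S}^2$ ends enters, and why the trick fails for arcs ending at interior knotted points. (Alternatively, one can prove the proposition by a Morse-theoretic induction: put the restriction to $\alpha$ of the projection $\mathbb{S}^2\times[0,1]\to[0,1]$ in general position, and use the same light bulb maneuver to push the highest interior local maximum up over the free endpoint $(x,1)$, cancelling a critical pair; once $\alpha$ has no interior critical points it is the graph $\{(\beta(t),t)\}$ of a path $\beta$ in $\mathbb{S}^2$, and contracting $\beta$ to the constant path $x$ through paths --- whose graphs are automatically embedded arcs --- gives an isotopy of $\alpha$ to $v$, which extends to an ambient isotopy by isotopy extension.)
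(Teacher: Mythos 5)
Your overall architecture is sound and is essentially the one the paper itself uses for the Generalized Light Bulb Theorem (Proposition \ref{general}): realize the homotopy from $\alpha$ to the vertical arc by a generic family with isolated double points, so that it suffices to realize each crossing change by an ambient isotopy, and effect that crossing change by dragging the offending strand in a thin tube along the sub-arc of $\alpha$ that runs to the spherical end. The gap is in the final, essential move. You propose to ``sweep the strand around the free end of the radial segment inside a small half-ball $U\ni(x,1)$,'' but this move does not exist: the free end of that segment lies \emph{on} the boundary sphere $\mathbb{S}^2\times\{1\}$, so going around it would require crossing $\partial(\mathbb{S}^2\times[0,1])$. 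Inside $U$ the terminal segment of $\alpha$ is a spanning arc of the half-ball (one endpoint on the flat face in $\partial M$, the other on the spherical face where $\alpha$ exits $U$), and its meridian is \emph{not} null-homotopic in $U\setminus\alpha$; consequently no isotopy supported in $U$ can accomplish the crossing change. A quick sanity check shows the move cannot be local near the boundary endpoint: a small half-ball about a boundary point looks the same whether the boundary component is a sphere or a torus, yet the statement is false for, say, $T^2\times[0,1]$, where a locally knotted vertical arc is homotopic but not isotopic (rel endpoints) to the vertical arc. So the parenthetical justification ``possible precisely because $\mathbb{S}^2\times\{1\}$ is an unobstructed boundary $2$-sphere,'' as you use it, misidentifies where the sphere hypothesis enters.

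The correct maneuver is global in the sphere direction, which is what the paper's sketch of Proposition \ref{general} means by ``pull it over the two sphere'': choose $\delta$ so small that the collar $\mathbb{S}^2\times[1-\delta,1]$ meets $\alpha$ only in its terminal segment; then the level sphere $\mathbb{S}^2\times\{1-\delta/2\}$ meets $\alpha$ in exactly one point, so a meridian of the guide near the top bounds the \emph{complementary disk} of that level sphere, a disk disjoint from $\alpha$. Dragging the offending strand up the guide and sweeping it across this complementary disk (i.e.\ around the whole sphere, not around the tip) and back realizes exactly one crossing change by an ambient isotopy; the genus-zero hypothesis is used precisely because a $2$-sphere minus a point is a disk, capping the meridian — with higher-genus boundary the complementary surface is not a disk and the trick (and the theorem) fails. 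The same correction is needed in your Morse-theoretic variant: one cannot ``push the highest interior local maximum up over the free endpoint $(x,1)$''; instead one cancels the uppermost critical pair by sliding the corresponding strand across the complementary disk of a level sphere lying above all interior critical values. With the maneuver repaired in this way, the rest of your argument (reduction to crossing changes, the guide tube, and the final contraction of the graph of a path in $\mathbb{S}^2$) goes through.
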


The light bulb theorem can be interpreted physically as that one can untangle a lightbulb cord hanging from the ceiling and attached to a lightbulb by passing the cord around the bulb many times.  

The simplest nontrivial case of Proposition \ref{movehandles} consists of two parallel spheres joined by a knotted neck.  Here Proposition \ref{lb} allows us to untangle this neck so that it is vertical and contained in one of the balls $B_i$. Then we can apply Corollary \ref{coro:area decrease} to reduce the area of the surface.


We will in fact need the following generalization of the light bulb theorem (cf. Proposition 4 in  \cite{HT}):

\begin{prop}[Generalized light bulb theorem]\label{general}
Let $M$ be a $3$-manifold and $\alpha$ an arc with one boundary point on a sphere component $\Si'$ of $\partial M$ and the other on a different boundary component.  Let $\beta$ be a different arc with the same end points as $\alpha$.  If $\alpha$ and $\beta$ are homotopic, then they are isotopic.  

Moreover, if $\gamma$ is an arc freely homotopic to $\alpha$ (i.e. joined through a homotopy where the boundary points are allowed to slide in the homotopy along $\partial M$), then they are freely isotopic (i.e., they are joined by an isotopy with the same property).
\end{prop}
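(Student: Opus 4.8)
The plan is to reduce the statement to realizing finitely many \emph{crossing changes} of the arc by ambient isotopies, and then to perform each crossing change by the classical light bulb maneuver of Proposition~\ref{lb} inside a copy of $S^2\times[0,1]$ built from the sphere boundary component $\Gamma$ and a subarc of $\alpha$.

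\textbf{Reducing the free statement to a based one.} I would first handle the ``moreover'' clause. Given a free homotopy from $\alpha$ to $\gamma$, its two endpoints trace paths, one inside $\Gamma\cong S^2$ and one inside the other boundary component $F$; extending these to ambient isotopies supported in collars of the respective components, I may assume $\gamma$ and $\alpha$ share their endpoints $p\in\Gamma$, $q\in F$. Then the (matched) free homotopy exhibits $\gamma$ as homotopic rel $\partial$ to $\alpha$ with boundary loops $c_\Gamma\in\pi_1(\Gamma,p)$, $c_F\in\pi_1(F,q)$ spliced in near its endpoints; since $\pi_1(\Gamma)=1$ the first is trivial, and a spliced-in boundary loop such as $c_F$ can be removed by a free isotopy that retracts the endpoint $q$ back along $c_F$. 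Hence $\gamma$ is freely isotopic to an arc homotopic rel $\partial$ to $\alpha$, and it suffices to prove the based case: two embedded arcs with the same endpoints, one of them on $\Gamma\cong S^2$, that are homotopic rel $\partial$ are isotopic rel $\partial$.

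\textbf{Homotopy $=$ isotopy $+$ crossing changes.} Next I would put $\alpha$, $\beta$ in general position and choose a generic homotopy rel endpoints between them. A generic $1$-parameter family of embedded arcs in a $3$-manifold stays embedded except at finitely many parameter values, where two strands of the arc meet transversally and pass through one another. So $\beta$ is obtained from $\alpha$ by finitely many ambient isotopies rel endpoints together with finitely many crossing changes, and it is enough to realize a single crossing change of $\alpha$ by an ambient isotopy rel endpoints.

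\textbf{Realizing one crossing change by the light bulb move.} To push a strand $Y$ of $\alpha$ through a strand $X$ of $\alpha$, follow $\alpha$ from the crossing site to $p\in\Gamma$ and, by an ambient isotopy, push the relevant subarc of $\alpha$ and the strand $X$ into a thin collar $\Gamma\times[0,1]$, so that $\alpha$ meets it in the single vertical segment $\{p\}\times[0,1]$ and $X$ is a short horizontal arc crossing near $\{p\}$ at small height. A regular neighbourhood $R$ of $\Gamma\cup(\{p\}\times[0,1])$ is diffeomorphic to $S^2\times[0,1]$, and inside $R$ this is exactly the setting of Proposition~\ref{lb}: dragging $p$ once around a suitably chosen loop $\mu\subset\Gamma$ is an ambient isotopy (free at $p$, but since $\mu$ is a loop in fact rel endpoints, and ambient since $\mu\subset\partial M$) that sweeps $Y$ through $X$ and past nothing else, after which the spurious wrap of $\mu$ near $p$ is undone by sweeping across the complementary disc of $\mu$ in $\Gamma\cong S^2$. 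Iterating over all the crossing changes proves the based case; together with the reduction above this gives the general statement. (This is essentially the maneuver in \cite{HT}.)

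\textbf{Expected main obstacle.} The crossing-change step is the crux: one must choose $\mu\subset\Gamma$ and pre-position $X$ so that the sweep of $Y$ meets $X$ exactly once and misses the remaining, possibly badly knotted, part of $\alpha$ running to $q$. This is precisely where the sphere hypothesis on $\Gamma$ is used — it supplies both the null-homotopy of $\mu$ and the two complementary capping discs — and it is why the statement fails for a boundary component of positive genus.
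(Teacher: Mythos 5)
Your proposal is correct and takes essentially the same route as the paper's sketch: put the homotopy in general position so that it becomes finitely many crossing changes, and realize each crossing change by dragging a strand along the arc to the sphere boundary component and sweeping it over the sphere (the light bulb maneuver of Proposition \ref{lb}, cf.\ \cite{HT}). The only difference is presentational: you make explicit the reduction of the free-endpoint (``moreover'') clause to the based case, which the paper's sketch leaves implicit.
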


\noindent
\emph{Sketch of Proof:}
 The homotopy between $\alpha$ and $\beta$ can be realized by a family of arcs $\alpha_t$ so that $\alpha_t$ is embedded or has a single double point for each $t\in [0,1]$.  If $\alpha_{t_0}$ contains a double point, the curve $\alpha_{t_0}$ consists of three consecutive embedded sub-arcs $\alpha_{t_0}([0,a])$, $\alpha_{t_0}([a,b])$, and $\alpha_{t_0}([b,1])$ so that without loss of generality $\alpha_{t_0}([0,a])$ connects to the 2-sphere $\Si'$. Fix a small constant $\varepsilon>0$. For $t_1$ $\varepsilon$-close to $t_0$ so that $\alpha_{t_1}$ is an embedding close to $\alpha_{t_0}$ in the smooth topology, we can pull the arc $\alpha_{t_1}([b-\varepsilon,b+\varepsilon])$ 
 along the arc $\alpha_{t_1}([0,a])$ and then pull it ``over'' the sphere $\Si'$, and then pull the arc back to a position close to its original position, but on the other side of $\alpha_{t_1}([a-\varepsilon,a+\varepsilon])$. 
 We then use this deformation to smoothly replace $\alpha_t$ for $t\in [t_0-\varepsilon,t_0+\varepsilon]$. Repeating this procedure for each double point, we obtain the desired isotopy.  The proof is illustrated in Figure \ref{fig:lightbulb}.

\qed



\begin{figure} 
   \centering	
	\includegraphics[scale=0.7]{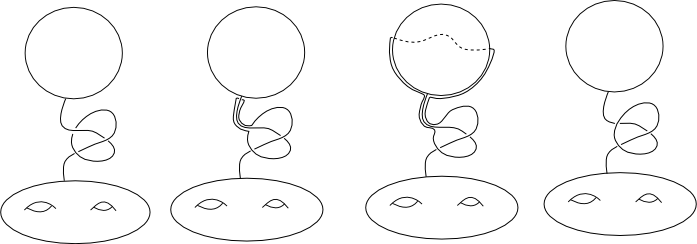}
	\caption{Changing the over-under crossing in the proof of the Light Bulb Theorem.}
	\label{fig:lightbulb}
\end{figure}

We will need to apply Proposition \ref{general}  to an arc inside a compression body in the following situation.

\begin{coro} \label{arc isotopy}
    Let $W$ be a compression body and suppose $R_1$
    is a connected component of $\partial_- W$ that is 
    diffeomorphic to $\mathbb{S}^2$. Suppose that $\theta_i: [0,1] \rightarrow W$, $i=0,1$, are two embedded arcs with $\theta_i(0) \in R_1$, 
    $\theta_i(1) \in \partial_+ W$ for $i=0,1$. Then there exists an isotopy of arcs $\theta_t$ in $W$ with
      $\theta_t(0) \in R_1$, 
    $\theta_t(1) \in \partial_+ W$, $t\in [0,1]$.
\end{coro}

\begin{proof}
Since the boundaries of $\theta_0(0)$, $\theta_0(1)$ and $\theta_1(0)$, $\theta_1(1)$
belong to respectively the same connected components of $\partial W$,
there exists an isotopy of $\theta_0$
to an embedded arc $\tilde{\theta}$ with $\tilde{\theta}(0) = \theta_1(0)$ and $\tilde{\theta}(1) = \theta_1(1)$.
By Proposition \ref{general} it is enough to show
that arcs $\tilde{\theta}$ and $\theta_1$ are homotopic. 

We deform $\tilde{\theta}$ to the curve $\theta_1 \cup (-\theta_1) \cup \tilde{\theta}$.
By the definition of a compression body the map
$\pi_1(\partial_+W) \rightarrow \pi_1(W)$ induced by the inclusion $i: \partial_+W \rightarrow W$ 
is surjective. It follows by the long exact sequence for the pair $(W, \partial_+W)$ that $\pi_1(W, \partial_+W) = 0$. Hence, we can deform arc $-\theta_1 \cup \tilde{\theta}$ to an arc $\alpha$ that lies in
$\partial_+W$ through arcs with endpoints in $\partial_+W$.
 Contracting $\alpha$ by moving its endpoint inside $\partial_+W$ we obtain the desired homotopy from $\tilde{\theta}$ to $\theta_1$.
\end{proof}

\subsection{Proof of Proposition \ref{movehandles}}

Let $\delta>0$. Let $\overline{h}, \varepsilon >0$ be the constants in Lemma \ref{stacking2} and let $h\in (0,\overline{h})$. 

By Proposition \ref{stacking2} applied to $\G$, there is an isotopy with surgeries $\{\G_t\}$ with 
$$\mbox{Area}(\G_t)< \mbox{Area}(\G)+\delta,$$
starting from $\G_0=\G$ and ending at an embedded surface $\G_1$ contained in $N_h(\Si)$, which is stacked over each connected component
$\Si_i$ of $\Si= \partial M$ inside $N_h(\Si)$.
For each connected component $\Si_i$ of $\Si$, $\G_1$ is $k_i$-stacked in $N_h(\Si_i)$: it is a union of sheets 
\begin{equation}\label{sheet ordering}
S_{i,1}=\phi(\Si_i,h_1),...,S_{i,k_i}=\phi(\Si_i,h_{k_i}),
\end{equation}
for some numbers
$$0\leq h_1<...<h_{k_i}\leq \hat{h} $$
where $\hat{h}$ was defined in Subsection \ref{r}.
The surface $\Sigma_i$ is a boundary component of $M$, $S_{i,1}$ is the sheet closest to $\Sigma_i$ while $S_{i,k_i}$ is the innermost sheet inside $M$. By Theorem \ref{strongirreduce} it follows
that $k_i>1$ is only possible if $\Sigma_i$ is a sphere. So in the proof of this proposition, we are assuming that there is a sphere component $\Sigma_i$ for which
\begin{equation}\label{assump5.1}
k_i>1.
\end{equation}

By Lemma \ref{neck-pinching isotopy}, for any $\eta>0$ we can isotopically
deform $\G$ through a smooth isotopy $\{\G'_t\}$ from $\G'_0=\G$ to $\G'_1$
so that for all $t$, 
$$\mbox{Area}(\G'_t)< \mbox{Area}(\G_t)+\eta\quad  (< \mbox{Area}(\G)+\delta+\eta),$$
and $\G'_1$ and $\G_1$ are $\eta$-close as varifolds:
\begin{equation}\label{cloe}
\mathbf{F}(\G'_1, \G_1)< \eta.
\end{equation}

By (\ref{cloe}), for any $\tilde{\epsilon}>0$, choosing $\eta>0$ sufficiently small and using the coarea inequality and Sard's lemma
we can find surfaces 
$$C'_{i,j} = \phi(\Si_i,h^{(j)}),$$ $j=1,...,k_i$,  
$$h_1<h^{(1)}<h_2<...< h^{(j-1)}<h_j < h^{(j)}<... < h_{k_i}<h^{(k_i)},$$
so that $\G'_1 \cap C'_{i,j}$ is a collection of closed curves,
whose total length is smaller than $\tilde{\epsilon}$, where $\tilde{\epsilon}$ can be made arbitrarily small by choosing $\eta>0$ small enough. Let $\{ C_q \}$ be the family of all the surfaces of the form $C'_{i,j}$ for some $i,j$.
Consider the connected components of $\G'_1 \cap \bigcup C_q$: we call these closed curves  $\gamma_1,...,\gamma_K$  so that
\begin{equation}\label{inters gammas}
\G'_1 \cap \bigcup_{l} C_q = \sqcup_{j=1}^K \gamma_j.
\end{equation}
Given $j\in \{1,...,K\}$, let $C_{q'}$ be the surface in $\{C_q\}$ containing $\gamma_j$ and let $D_j$ denote the small disk in  $C_{q'}$ such that $\partial D_j = \gamma_j$.
We number the curves $\gamma_j$ so that if 
$\gamma_{j_1}$ and $\gamma_{j_2}$ are contained in the same surface $C_{q'}$ and the corresponding disks satisfy
$D_{j_1} \subset D_{j_2}$, then $j_2 > j_1$.



Using Lemma \ref{intersection neck-pinch}, we can define an isotopy with one neck-pinch surgery along $D_K$ (which is an innermost disk), starting from $\G'_1$ and ending at a new closed surface $\G'_2$. Inductively we construct a sequence of 
closed surfaces $\G'_1, \dots, \G'_{K+1}$, such that for each $n\in \{1,...,K\}$,
$\G'_{n+1}$ is obtained from $\G'_n$ via an isotopy with one neck-pinch surgery along $D_n$. 
By (\ref{cloe}) and Lemma \ref{intersection neck-pinch}, if $\eta$ was chosen small enough, then all these isotopies with surgeries can be defined so that none of them increase the area by more than $\delta$, and 
\begin{equation*}
    \sum_{n=1}^K{\bf F}(\G'_n, \G_{n+1}')\leq \frac{4}{\pi}\sum_{n=1}^K \mbox{Length}(\gamma_i)^2
\leq \frac{4 \tilde{\varepsilon}^2}{\pi} 
\end{equation*}
Consequently, for an arbitrarily small $\eta_1>0$ we can choose $\eta>0$ to be sufficiently small, so that
\begin{equation}\label{g'n}
{\bf F}(\G'_n, \G_1) \leq \sum_{k=1}^{n-1} {\bf F}(\G'_k, \G_{k+1}') + {\bf F}(\G'_1, \G_1)< \frac{4 \tilde{\varepsilon}^2}{\pi} + \eta < \eta_1
\end{equation}


\vspace{1em}

In the rest of the proof, we will  argue that one of the surfaces $\G'_n$ has a neck which can be moved into a small region and be ``opened up". We will then apply Corollary \ref{coro:area decrease} and Lemma \ref{neck-pinching isotopy} to reduce the area of $\G'_1$ by a uniform amount in some cell $\Delta_i$ via a smooth isotopy.

For each $n$, consider the change in the topology of the surface when going back from $\G'_{n+1}$ to $\G'_{n}$. We have two possibilities: either a handle is attached to a connected component of $\G'_{n+1}$ without changing the other components, or two components of $\G'_{n+1}$ are connected without changing the other components.
Observe that due to (\ref{cloe}), for every sufficiently small $\eta_2>0$ we can find  sufficiently small $\eta>0$, so that
the following holds:
for each sheet $S_{i,j}$ of the surface $\G_1$ in the neighborhood of $\Sigma_i$, there is a unique connected component $\tilde{S}_{i,j}$ of $\G'_{K+1}$ in the tubular neighborhood  $N_h(\Si_i)$, with 
\begin{equation}\label{stildeij}
\sum_{i,j}{\bf F}(\tilde{S}_{i,j}, S_{i,j})< \eta_2.
\end{equation}
Moreover, any other connected component of $\G'_{K+1}$ which is not $\eta_2$-close in the ${\bf F}$-topology to a sheet of $\G_1$ has area smaller than $\eta_2$. 
 In particular, we can choose $\eta, \eta_2$ small enough (depending only on $(M,g)$) so that all such small area connected components 
 bound a handlebody of small volume by Proposition \ref{small volume handlebody} (here we use that by assumption, $\G$ is a strongly irreducible Heegaard splitting of the closed 3-manifold $( M_0,g)$ which contains $M$).

\vspace{1em}

Recall that $ \partial M = \Si = \bigcup_{i} \Si_i$ and that by assumption, we have  a strongly irreducible generalized Heegaard splitting $(W_0,W_1,\Gamma)$
of $(M,\partial_0 M, \partial_1 M)$. 
A key observation is the following lemma, which we informally describe as follows: there exists the smallest integer $l_0>0$, such that the surface $\G'_{K+1-l_0}$ has a connected component containing two subsurfaces that are right next to each other and close (in $\mathbf{F}$ topology) to a boundary component $\Si_i\subset \partial M$; moreover, $\G'_{K+2-l_0}$ is obtained from $\G'_{K+1-l_0}$ by a neck-pinch that disconnects these two subsurfaces.

\begin{figure}
\label{fig:neckcuts}
   \centering	
	\includegraphics[scale=1.15]{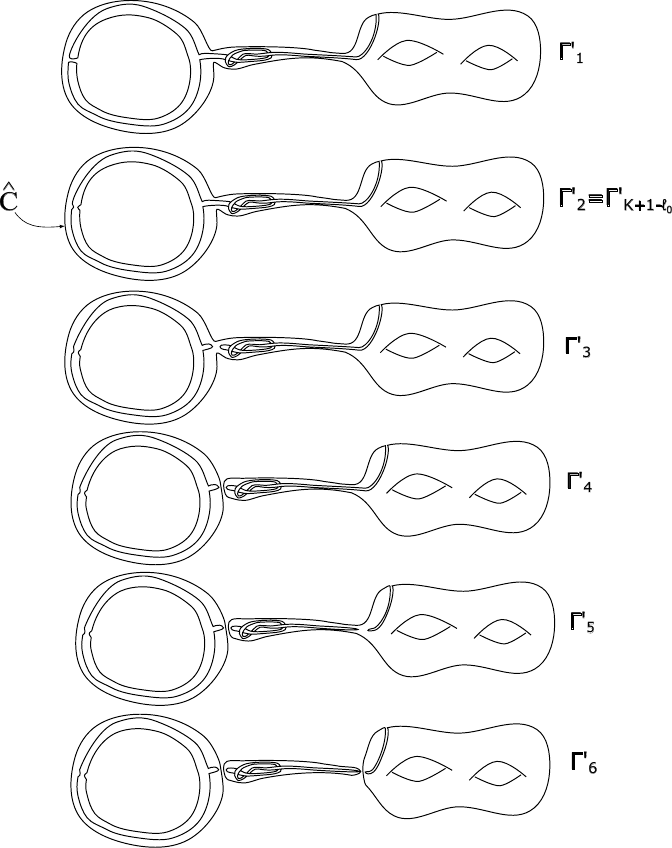}
	\caption{The sequence of surfaces $\G'_n$ with $K+1=6$ and $l_0 =4$.}
\end{figure}

\begin{lemma}[Definition of $l_0\geq1$] \label{observation}
Let $\eta_3>0$. If $\eta$ was chosen small enough, then there is an integer 
$$l_0\in \{1,...,K\}$$ such that the following holds. 

For some connected component $\hat{C}$ of
$\G'_{K+1-l_0}$, and for some sphere component $\Si_{i}$
of $\Si$, we have
\begin{equation}\label{tildes3}
{\bf F}(\hat{C}\cap N_h(\Sigma_i), S_{i,j_0} \cup S_{i,j_0+1})< 3\eta_3
\end{equation}
where $S_{i,j_0}$ and  $S_{i,j_0+1}$ are two consecutive sheets of $\G_1$ in the tubular neighborhood $N_h(\Si_{i})$.

Moreover, for each $n=K+2-l_0,...,K+1$ and for any connected component $C'$ of $\G'_n$, 
\begin{enumerate}
    \item either $\mbox{Area}(C')<\eta_3$ and $C'$ bounds a handlebody of small volume, 
    \item or 
    $$C'=V \cup \bigcup_{m=1}^Q A_m$$ where  $A_1,...,A_Q$ are some compact surfaces with boundary, respectively contained in disjoint tubular neighborhoods:
    $$A_1\subset N_h(\Sigma_{i_1}), ..., A_Q\subset N_h(\Sigma_{i_Q})\quad \text{($i_1,...,i_Q$ are all distinct)},$$ such that for each $m\in \{1,...,Q\}$, there is a sheet $S_{i_m,j_m}\subset \G_1$ with 
    $${\bf F}(A_m, S_{i_m,j_m})< \eta_3$$
    and $V$ is a compact surface with boundary such that $\mbox{Area}(V)<\eta_3$. Moreover, $C'$ bounds a compression body $W'$ with 
    $$\partial_+W' = C'\quad  \text{and} \quad \partial_-W' = \bigcup_{m=1}^Q\Si_{i_m}.$$

\item 
Additionally, for two components $C'$, $C''$ of $\G'_n$, if $W'$, $W''$ denote the handlebody or compression body bounded by $C'$ and $C''$ given by (1) and (2) above, then either $W'\cap W''=\varnothing$, or  $W'\subset W''$, or $W''\subset W'$.
\end{enumerate}
\end{lemma}

\begin{remark}
In plain words, Items (1) and (2) in the lemma state that for each connected component $C' $ of $\G'_{n}$ for $n = K+2-l_0,...,K+1$,
 \begin{itemize}
 \item either $C'$ has small area, in which case it bounds a small volume handlebody; 
 \item or $C'$ is sheet-like, in which case it bounds a compression body $W'$ whose negative boundary $\partial_-W \subset \partial M$ is close to $C'$ in the $\mathbf{F}$-topology. 
 \end{itemize}
 Here, $C' $ being ``sheet-like'' means that in at least one $N_h(\Si_i)$, 
 $${\bf F}(C'\cap N_h(\Si_i), S_{i,j_{C'}})< \eta_3,$$
 for some sheet $S_{i,j_{C'}}$ of $\G_1$. In Figure \ref{fig:neckcuts}, we have illustrated an example where $K+1=6$ and $l_0=4$.
\end{remark}

\begin{proof}

By choosing $\eta$ small enough, we can take $\eta_1$ in (\ref{g'n}) and $\eta_2$ in (\ref{stildeij}) to be smaller than $\eta_3$.
By a backward induction on the surfaces $ \G'_{K+1}, \G'_{K},  \G'_{K-1}...$, we will prove that for every connected component Items (1), (2), (3) in the lemma hold until it stops being true, in which case (\ref{tildes3}) holds for some connected component $\hat{C}$.

We start by showing Items (1), (2), (3) for $\G'_{K+1}$.
For each component $\Si_i$ of $\Si$, each sheet $S_{i,j}$ of $\G_1$ near $\Si_i$ is a graph over $\Si_i$ so it bounds a trivial compression body contained inside the tubular neighborhood $N_h(\Si_i)$ whose positive boundary is $S_{i,j}$ and negative boundary is $\Si_i$. 

Set $\tilde{S}_{i,j}$ to be the unique connected component  of $\G'_{K+1}$, such that
\begin{equation}\label{small eta volume}
{\bf F}(\tilde{S}_{i,j}, S_{i,j})< \eta_2
\end{equation}
(see (\ref{stildeij})). 
Let us justify that for each $S_{i,j}$, the surface $\tilde{S}_{i,j}$ also bounds a compression body $W_{K,i,j}$ with negative boundary $\Si_i$, namely: 
$$\partial_+W_{K,i,j} = \tilde{S}_{i,j},\quad \partial_-W_{K,i,j} = \Si_i \subset \partial M.$$
First, note that after doing some neck-pinch surgeries on $\tilde{S}_{i,j}$, we obtain the union of a surface isotopic to $S_{i,j}$ and some closed surfaces isotopic to surfaces of arbitrarily small area.
To see this, we first observe that our $\F$-distance estimates (\ref{small eta volume}) imply that there exists a point $q \in \Sigma_i$, such that the fibre of the projection map $P^{-1}(q) \subset \Om_h(\Sigma_i)$ intersects $\tilde{S}_{i,j}$ exactly once. It follows that $\tilde{S}_{i,j}$ is homotopically nontrivial  inside $\Om_h(\Sigma_i)$. 
Applying Proposition \ref{stacking2} to $\tilde{S}_{i,j}$ we obtain an isotopy with surgeries to a surfaces that is $k$-stacked in the tubular neighborhood $N_h(\Sigma_i)$. Moreover, we have that $k<2$ by the area estimates and $k\geq 1$ since $\tilde{S}_{i,j}$ is homotopically nontrivial  inside $\Om_h(\Sigma_i)$. We conclude that $k=1$. To summarize, there exists a finite number of neck-pinch surgeries that can be applied to $\tilde{S}_{i,j}$ to obtain a surface isotopic to $\Sigma_i$ (and hence $S_{i,j}$) plus a finite union of surfaces isotopic to surfaces of arbitrarily small areas.
The area of those additional surfaces can be made smaller than the $\beta(M,g)$ of Proposition \ref{small volume handlebody}.
Proposition \ref{small volume handlebody} implies that those small area surfaces all bound handlebodies.
Now, by applying Corollary \ref{finding_neck} (2) repeatedly, we finish the justification. 


{
For $i\neq i'$, and given $j,j'$, the compression bodies $W_{K,i,j}$ and $W_{K,i',j'}$, bounded by $\tilde{S}_{i,j}$ and $\tilde{S}_{i',j'}$, are disjoint:
\begin{equation}\label{disj purple}
W_{K,i,j}\cap W_{K,i',j'} =\varnothing.
\end{equation}
Indeed, if their intersection was non-empty, then since by embeddedness, $\tilde{S}_{i,j} \cap \tilde{S}_{i',j'}=\varnothing$ and since 
$$N_h(\Si_i) \cap N_h(\Si_{i'})=\varnothing,$$ we would have that  
$$\tilde{S}_{i,j}\subset W_{K,i',j'} \text{ and }\tilde{S}_{i',j'}\subset W_{K,i,j},$$
and $\tilde{S}_{i,j}$ separates $\partial_+W_{K,i',j'} = \tilde{S}_{i',j'}$ from $\partial_-W_{K,i',j'}$. 
This is impossible because  by (\ref{stildeij}), 
only a tiny fraction of the surface $\tilde{S}_{i,j}$ (resp. $\tilde{S}_{i',j'}$) is contained outside of $N_h(\Si_i)$ (resp. $N_h(\Si_{i'})$).
}

We also note for a fixed $i$, the compression bodies $\{W_{K,i,j}\}_{j}$ share the same negative boundary $\Si_i$ and have connected positive boundaries. So, for any two of them, one is contained in the other. Because of our numbering of the sheets $S_{i,j}$, see (\ref{sheet ordering}), they satisfy the following monotonicity property for any $i$: 
\begin{equation}\label{inclu purple} W_{K,i,j'}\subset W_{K,i,j} \text{ for $j'<j$}.\end{equation}

On the other hand, the connected components of $\G'_{K+1}$ which are not of the form $\tilde{S}_{i,j}$ for some $i$ and $j$, have areas smaller than $\min\{\eta_3,\beta(M,g)\}$ if $\eta$ is small enough by (\ref{g'n}), and so they bound handlebodies of small volumes by Proposition \ref{small volume handlebody}. Note that such small handlebodies clearly cannot contain any surface of the form $\tilde{S}_{i,j}$ for some $i,j$.

Then, Item (3) for $\G'_{K+1}$ follows from (\ref{disj purple}) and (\ref{inclu purple}).
The above also proves Items (1), (2) for $\G'_{K+1}$, since any connected component $C'$ of $\G'_{K+1}$ either is of the form $\tilde{S}_{i,j}$ so that it bounds the compression body $W_{K,i,j}$ with negative boundary $\Si_i$, with volume less than half of $\Vol(M,g)$, and it is contained in $N_h(\Si_i)$ (the $A_1$ and $V$ in the statement of Item (2) correspond to $\tilde{S}_{i,j}$ and the empty set respectively), or it has area at most $\eta_3$ and bounds a compression body. 
This concludes the proof of the base of induction.

\vspace{1em}

{
 Next, assume that Items (1), (2), (3) are true for $\G'_{n+1}$. Below, a connected component $C' $ is said to be ``sheet-like'' if in at least one $N_h(\Si_i)$, 
 $${\bf F}(C'\cap N_h(\Si_i), S_{i,j_{C'}})< \eta_3,$$
 for some sheet $S_{i,j_{C'}}$ of $\G_1$.
}

{
 Three cases can occur when going from $\G'_{n+1}$ to $\G'_n$:
 \begin{enumerate} [label=(\Roman*)]
 \item either a connected component $\hat{C}$ of $\G'_n$ is the connected sum of
 a connected component $C'$ of $\G'_{n+1}$ with a small area surface $C''$, and the other components of $\G'_n$ are also components of $\G'_{n+1}$; 
 \item or a connected component $\hat{C}$ of $\G'_n$ is the result of adding a small handle to a connected component $C'$ of $\G'_{n+1}$ and the other components of $\G'_n$ are also components of $\G'_{n+1}$; 
 \item or else a component $\hat{C}$ of $\G'_n$ is the result of connecting two components $C'$, $C''$ of $\G'_{n+1}$ which are both sheet-like, and the other components of $\G'_n$ are also components of $\G'_{n+1}$. 
 \end{enumerate}
 }
 
 In Cases (I) and (II), Items (1) and (2) are also true for $\G'_n$ by Corollary \ref{finding_neck} (2). Item (3) is also clearly true in these cases since Item (3) holds for $C'$ and $C''$.  
 
In Case (III), using the induction hypothesis, we write for some $Q',Q''\geq 1$: 
$$C' = \bigcup_{m=1}^{Q'} A'_m \cup V'\quad \text{with } \quad A'_1\subset N_h(\Sigma_{i'_1}), ...,A'_{Q'}\subset  N_h(\Sigma_{i'_{Q'}}),$$
and 
$$C''= \bigcup_{m=1}^{Q''} A''_m \cup V'',\quad \text{with } \quad
A''_1\subset N_h(\Sigma_{i''_1}), ...,A''_{Q''}\subset  N_h(\Sigma_{i''_{Q''}}),$$
where the surfaces $A'_m,V',A''_m,V''$ are as in Item (2) of the lemma.
By the induction hypothesis, $C'$, $C''$ respectively bound compression bodies $W'$, $W''$ satisfying Item (2) in the lemma. We also have by Corollary \ref{finding_neck} (1) that
$$\text{either} \quad W' \cap W''=\varnothing \quad \text{or} \quad W'\subset W'' \quad\text{or} \quad W''\subset W'.$$

Case (IIIa): assume $W' \cap W''=\varnothing$. Then 
$$\partial_-W' \cap \partial_-W'' =\bigcup_{m=1}^{Q'}\Sigma_{i'_m}\cap\bigcup_{m=1}^{Q''}\Sigma_{{i''}_m} =\varnothing.$$
Items (1), (2) of the lemma are thus true for $\G'_n$ by Corollary \ref{finding_neck} (2) and the induction hypothesis when $\eta$ is small enough. Item (3) is true by the induction hyptothesis and the same argument that showed (\ref{disj purple}).

Case (IIIb): assume $W'\subset W''$ (the last case $W''\subset W'$ is treated in a similar way). 
Then we have
$$\varnothing \neq \partial_-W'=\bigcup_{m=1}^{Q'}\Sigma_{i'_m} \subset \partial_-W''=\bigcup_{m=1}^{Q''}\Sigma_{{i''}_m}.$$ 


We can assume, after renumbering the $i'_m$ and $i''_m$, that 
$$ i'_1=i''_1\text{ and }\Sigma_{i'_1} = \Sigma_{{i''}_1}.$$ Recall that $C'$ and $C''$  in $\G'_{n+1}$ get connected together to yield $\hat{C}$ in $\G'_n$, and that this attachment does not modify the other components of $\G'_{n+1}$. Recall also that by induction any component of $\G'_{n+1}$ bounds either a small volume handlebody or a compression body whose negative boundary is inside $\Sigma = \partial M$. 
By the induction hypothesis and Item (3), any two of those compression bodies are either  pairwise disjoint (when their negative boundary are disjoint) or one is contained in the other (when their negative boundary share some components).
It follows that $A'_1$ and $A''_1$ have to be close to \emph{consecutive} sheets of $\G_1$ in $N_h(\Si_{i'_1})$: for some $j_0$
$${\bf F}(A'_1, S_{i'_1,j_0})< \eta_3 \text{ and } {\bf F}(A''_1, S_{i'_1,j_0+1})< \eta_3.$$
But this implies (\ref{tildes3}). In this case, we finish the induction and  we set  $l_0:= K+1-n$ for that particular $n$. 

In summary, at each step of the induction, we have essentially four possibilities: Cases (I), (II), (IIIa) and (IIIb). In all cases except (IIIb), we can continue the induction. But in Case (IIIb),  (\ref{tildes3}) holds, in other words $\G'_{n+1}$ has a component containing two consecutive sheet-like subregions.

Since Items (1), (2), (3) of the lemma do not hold for $\G'_1$ by our assumption (\ref{assump5.1}), there exists a largest integer $n_0 \in \{1,...,K+1\}$ such that Items (1), (2), (3) are not true for $\G'_{K+1-n_0}$ (i.e. Case (IIIb) happened), in which case we set  $l_0:= K+1-n_0$. This completes the proof of the lemma.
\end{proof}

Consider the curve $\gamma$ along which the neck-pinch 
surgery was performed when passing from
$\G'_{K+1-l_0}$ to $\G'_{K+2-l_0}$. 
The neck-pinch surgery is performed on the  connected component $\hat{C}$ of $\G'_{K+1-l_0}$ in the notation of Lemma \ref{observation}.
In other words, the surface $\hat{C}$ is the result of two connected components of $\G'_{K+2-l_0}$ merging together. 
Using the notations of Lemma \ref{observation}, the two components each contain a subregion  close (in the varifold sense) to $S_{i,j_0}$ and $S_{i,j_0+1}$ respectively. 
We now show the following: \newline
\begin{lemma} \label{r1r2}
The two connected components of $\G'_{K+2-l_0}$ which become merged  in $\G'_{K+1-l_0}$ can be named 
$R_1$ and  $R_2$ so that the following holds: $R_2$ bounds a compression body $W$ in $M$ with 
$R_1\subset \partial_-W$,  $R_1$ is a 2-sphere, and the interior of $W$ is disjoint from $\G'_{K+2-l_0}$.
\end{lemma}
\begin{proof}

Suppose first that $ M_0$ is a $3$-sphere, so  that $\G$ is a 2-sphere or a 2-torus. Since the genus does not increase under surgery, the genus of the union $R_1 \cup R_2$ is at most $1$. We can choose $R_1$ to be a component with genus $0$, namely a 2-sphere. If $R_2$ is a 2-sphere, then the statement of the lemma is clear. 
If $R_2$ is a 2-torus, since the Heegaard splitting surface $\Gamma$ is a torus, all the neck-pinch surgeries that occur between $\G'_1$ and $\G'_{K+2-l_0}$ were non-essential and $R_2$ is isotopic to $\G$.
Thus $R_2$ bounds a compression body on both of its sides. 
In particular, $R_2$ bounds a compression body $\hat{W}$ containing the 2-sphere $R_1$. 
All the other connected components of $\G'_{K+1-l_0}$ 
 are 2-spheres, and those that are in $\hat{W}$ are contained in a 3-ball inside $\hat{W}$ we will call $\hat{B}$. The lemma is then proved in this case by taking $W = \hat{W}\setminus \hat{B}$.

 Next assume that that $ M_0$ is not topologically a 3-sphere. 
 With the notation $j_0$  introduced in the definition of $l_0$ (Lemma \ref{observation}), let $R_1$ (resp. $R_2$) denote the component of $\G'_{K+2-l_0}$ with a ``sheet-like'' piece close to $S_{i,j_0}$ (resp. $S_{i,j_0+1}$), where $S_{i,j_0}$ and  $S_{i,j_0+1}$ are consecutive sheets of $\G_1$ in $N_h(\Si_i)$ for some 2-sphere component $\Si_i$ of $\Si=\partial M$. Then by Lemma \ref{observation}, $R_1$, $R_2$ respectively bound compression bodies $W_1$, $W_2$ whose negative boundaries contain $\Si_i$. In particular, $W_1\cap W_2\neq \varnothing$.
Since $W_2$ is not a subset of $W_1$, by Corollary \ref{finding_neck} (1),  $W_1\subset W_2$, $W_1$ is topologically a 3-ball minus some 3-balls, and $R_1 = \partial_+ W_1$ is a 2-sphere. By Lemma \ref{observation} (3), all the connected components of $\G'_{K+1-l_0}$ contained in $W_2$ bound  handlebodies or compression bodies inside $W_2$. By Corollary \ref{finding_neck} (1), all connected components of $\G'_{K+1-l_0}$ contained in $W_2$ are actually 2-spheres. 
Set $W$ to be $W_2$ minus all the handlebodies and compression bodies bounded by those 2-spheres, so $W$ is clearly a compression body. We see that the 2-sphere $R_1$ is inside $\partial_-W$ because by Lemma \ref{observation} (1), $R_1$ and $R_2$ contain subregions lying next to each other and close to a boundary component $\Si_i$.
The lemma is proved.
\end{proof}

We are finally ready to finish the proof of Proposition \ref{movehandles}. 
Recall that we defined the special radius $\hat{r}>0$ in Subsection \ref{r}, the special points $p^i_m\in \Si$ in (\ref{pij def}), the cylindrical regions $D_{\hat{r}/2}(p^i_m)$ and the cylindrical surfaces $C_{\hat{r}/2}(p^i_m)$ in (\ref{def D_r}) and (\ref{def C_r}).
Fix the sphere component $\Si_i\subset \Si$ given by Lemma \ref{observation}, and let 
$$p^i_m\in \Si_i$$ be any one of the points defined in (\ref{pij def}). 

\begin{figure}
   \centering	
	\includegraphics[scale=1.7]{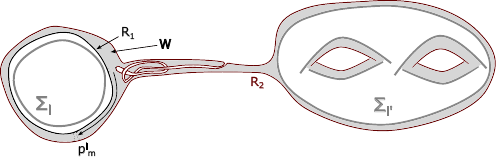}
	\caption{We move the neck connecting $R_1$ and $R_2$ to a small ball around $p^i_m$, where it can be opened.}
	\label{fig:neckfinding}
\end{figure}

Let $\hat{C}$ be the connected component of $\G'_{K+1-l_0}$ given by Lemma \ref{observation} and let $R_1,R_2$ be the corresponding two connected components of $\G'_{K+2-l_0}$ analyzed in  Lemma \ref{r1r2} (see Figure \ref{fig:neckfinding}).
Recall that according to Lemmas \ref{observation} and \ref{r1r2}, $R_2$ bounds a compression body $W$ with  $R_1\subset \partial_-W$, $R_1$ is a 2-sphere and the interior of $W$ avoids $\G'_{K+1-l_0}$. Besides $R_1,R_2$ contain, respectively, subregions $R_1',R_2'$ that are close (in the varifold sense) to the consecutive sheets $S_{i,j_0}$ and $S_{i,j_0+1}$ respectively.
Hence,  $\hat{C}$ intersects the cylinder $C_{\hat{r}/2}(p^i_m)$ transversally after an arbitrarily small perturbation of $\G'_{K+1-l_0}$, and 
\begin{equation}\label{ccr2}
\hat{C}\cap C_{\hat{r}/2}(p^i_m)
\end{equation}
has exactly two connected components 
$\chi_1$ and $\chi_2$ 
which do not bound a disk inside the surface $C_{\hat{r}/2}(p^i_m)$ (the possible other components are small curves bounding disks inside $C_{\hat{r}/2}(p^i_m)$). 
Let
$$Z_1\subset R_1 \text{ and } Z_2\subset R_2$$
denote connected components of $\G \cap D_{\hat{r}/2}(p^i_m)$ with $\chi_1 \subset \partial Z_1$
and $\chi_2 \subset \partial Z_2$.  The region in $D_{\hat{r}/2}(p^i_m)$ between $Z_1$ and $Z_2$ is contained in the compression body $W$.


Let $\eta_4>0$. 
Recall that, by (\ref{inters gammas}) and the construction of $\G'_n$, the surface $\G'_{K+1-l_0}$ intersects one of the surfaces $C_q$ along a closed curve $\gamma$ whose length is arbitrarily small (as long as $\eta$ was chosen small enough). If $\eta>0$ was chosen small enough, by slightly perturbing $\G'_{K+1-l_0}$ and $\G'_{K+2-l_0}$ without increasing their areas by more than $\eta_4$, we can assume that there is a connected embedded curve $\theta:[0,1]\to M$ with endpoints on $\G'_{K+2-l_0}$ such that 
$$\G'_{K+1-l_0} = \big(\G'_{K+2-l_0} \cup \exp_\theta([0,1]\times \eta_4\mathbb{S}^1)\big) \setminus \big(\exp_\theta(\{0\}\times \eta_4\mathbb{D}^2) \cup \exp_\theta(\{1\}\times \eta_4\mathbb{D}^2)\big)$$
where $\exp_\theta$ is the normal exponential map of $\theta$ and $\mathbb{D}^2$ is the Euclidean unit 2-disk whose boundary is the circle $\mathbb{S}^1$. (Strictly speaking, here $\G'_{K+1-l_0} $ may not be smoothly embedded around the ends of the tube $\exp_\theta(\{0\}\times \eta_4\mathbb{D}^2$, but we can smooth out corners along an embedded closed curve as in \cite[Section 3.2]{chambersliok}.)

Let $\chi_1,\chi_2$ be the curve components and $Z_1,Z_2$ be the surface components defined right after (\ref{ccr2}).
Since $W$ is a compression body and $R_1$ is a 2-sphere
we can apply Corollary \ref{arc isotopy} to obtain an
isotopy
$\theta_t$ of the curve $\theta$ inside $W$ 
which moves $\theta$ to an embedded curve $\tilde{\theta}$ satisfying the following properties: 
\begin{itemize}
    \item $\tilde{\theta}$ is vertical in the sense that the projection $P(\tilde{\theta})$ is a point in $\Sigma=\partial M$ ($P:\Omega_1\to \Sigma$ is defined in (\ref{proj P def} );
    \item $\tilde{\theta}\subset D_{\hat{r}/2}(p^i_m)$ and $\tilde{\theta}$ has endpoints in $Z_1\subset R_1$ and $Z_2\subset R_2$;
    \item  there is an embedded curve  $$\tilde{\alpha}_1:[0,1]\to Z_1\cup Z_2\cup \tilde{\theta}$$
with endpoints 
$$\tilde{\alpha}_1(0)\in \chi_1,\quad \tilde{\alpha}_1(1)\in \chi_2$$ 
which is isotopic with fixed endpoints inside $D_{\hat{r}/2}(p^i_m)$ to an embedded curve $\alpha_2:[0,1]\to C_{\hat{r}/2}(p^i_m)$. 
\end{itemize}

\begin{figure}
\label{tubemoving2}
   \centering	
	\includegraphics[scale=0.9]{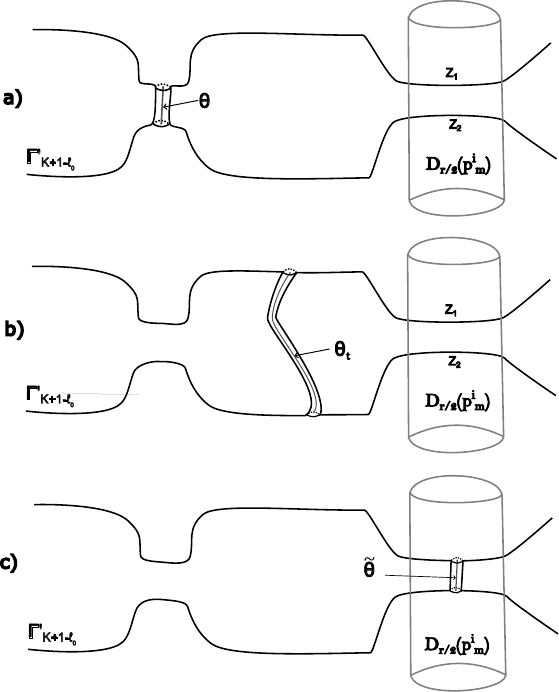}
	\caption{We move the neck inside $W\setminus R_1$ to a neck contained in $D_{\hat{r}/2}(p^i_m)$ which can be opened to decrease the area.}
	\label{fig:sliding}
\end{figure}

Let $\mathbf{n}(x) \in T_xM$ denote a unit normal 
to $\G'_{K+2-l_0}$ at a point
$x \in \G'_{K+2-l_0}$. We make a small perturbation to the homotopy
$\theta_t$, so that 
$|g(\theta_t'(0), \mathbf{n}(\theta_t(0))|> \iota>0$
and $|g(\theta_t'(1), \mathbf{n}(\theta_t(1))|> \iota>0$ for some $\iota>0$. It follows then that for sufficiently small $\eta_4>0$
 the intersection of the neighborhood and the surface $N_{\eta_4}(\theta_t) \cap \G'_{K+2-l_0}$ is a union of two small disks $D_t^0 \ni \theta_t(0)$ and $D_t^1 \ni \theta_t(1)$ in $\G'_{K+2-l_0}$.
Let $A_t$ denote the cylindrical
region in $\partial N_{\eta_4}(\theta_t) $ bounded by curves $\partial D_t^0$ and $\partial D_t^1$.

By a continuous smoothing of  $$\G'_{K+2-l_0}\cup A_t,$$
we get an isotopy from $\G'_{K+1-l_0}$ to an embedded surface $\G''$ which coincides with $\G'_{K+2-l_0}$ except inside $D_{\hat{r}/2}(p^i_m)$. We also have
\begin{equation} \label{G area}
    \mbox{Area}(\G'')\leq \mbox{Area}(\G'_{K+2-l_0})+\eta_5,
\end{equation}
where $\eta_5>0$ can be made arbitrarily small by choosing $\eta_4>0$ small enough.
The effect of this isotopy was to move a neck inside $D_{\hat{r}/2}(p^i_m)$ (see Figures \ref{fig:neckfinding} and \ref{fig:sliding}), and the last step below will be to open it up to substantially decrease the area of the surface.

We observe that $\G''$
satisfies the assumptions of the Corollary \ref{coro:area decrease}. The local area bound
$$\mbox{Area}(\G'' \cap B_{\hat{r}}(p_j^i)) \leq \frac{\delta}{50}$$
follows from \eqref{iv} and \eqref{G area}. 
Curve $\alpha_1$ in the statement of the Corollary is obtained by perturbing
$\tilde{\alpha}_1$ defined above.
Hence, applying Corollary \ref{coro:area decrease} one can decrease the area by a uniform amount by opening such a ``neck'' of $\G''$: there is an isotopy with surgeries $\G''_t$ from $\G''_0:=\G''$ to another surface $\G''_1$ such that 
\begin{enumerate}
\item $\mbox{Area}(\G''_t)\leq \mbox{Area}(\G'')+\delta/2 < \mbox{Area}(\G) + \delta$
\item  $\mbox{Area}(\G''_1)\leq \mbox{Area}(\G'')-c/2$ where $c=c(\G,\Si, \delta)$ is defined in (\ref{def of c}),
\item $$\mbox{Area}(\G''_1\setminus N_h(\Si))<\eta_1.$$
\end{enumerate}

Thanks to Lemma \ref{neck-pinching isotopy}, using this isotopy with surgeries, the isotopy $\{\G'_t\}$ in the beginning of this proof and the neck-pinch surgeries from $\G'_n$ to $\G'_{n+1}$, we can build a smooth isotopy of $\G$ satisfying all the requirements of the proposition. The proof is then finished.

\qed

\vspace{2em}

\section{Local min-max theory} \label{prelim}

\subsection{}\textbf{Definitions from min-max theory} \label{mmdef}

Let $N$ be an oriented connected compact $3$-manifold possibly with boundary, subset of a closed oriented Riemannian 3-manifold $(M_0,g)$. The surfaces considered in this subsection are all embedded. Since $N$ is oriented, we will only consider smooth sweepouts $\{\Sigma_t\}$ where all the slices are oriented. Recall that if $\Sigma$ is an embedded surface, we denote by $|\Sigma|$ the varifold associated to $\Sigma$ with multiplicity one. Let $\mathcal{V}_2(M_0)$ be the closure in the weak topology of the set of $2$-dimensional rectifiable varifolds in $M_0$. The $\mathbf{F}$-norm for varifolds (see \cite{P}) is denoted by $\mathbf{F}$. The Almgren map $\mathcal{A}$ (see \cite{Alm1}) associates to a continuous family of surfaces $\{\Sigma_t\}$ a $3$-dimensional  integral current $\mathcal{A}(\{\Sigma_t\})$.

\begin{defi} \label{defsmoothsweep}
Let $\{\Sigma_t\}_{t\in[a,b]}$ be a family of oriented closed embedded surfaces in $N$. We say that $\{\Sigma_t\}$ is a  \textit{smooth sweepout of $N$} if 
\begin{enumerate}
\item for all $t\in [a,b]$, $\Sigma_t$ is a smooth embedded surface in $N$,
\item $\Sigma_t$ varies smoothly in $t\in (a,b)$,
\item there is a partition $(A,B)$ of the components of $\partial N$ such that, $\Sigma_a=A$, $\Sigma_b=B$, and $|\Sigma_t|$ converges to $|\Sigma_a|$ (resp. $|\Sigma_b|$) in the $\mathbf{F}$-norm as $t\to a$ (resp. $b$),
\item $\mathcal{A}(\{\Sigma_t\})= [|N|]$, where $[|N|]$ denotes the $3$-dimensional integral current induced by  $N$ with its orientation.
\end{enumerate}
\end{defi}




Let $\Pi$ denote a set of smooth sweepouts parametrized by $[0,1]$.
Denote by $\Diff_0$ the set of diffeomorphisms of $N$ isotopic to the identity map and leaving the boundary fixed. The set $\Pi$ is called \emph{saturated} if: for any map $\psi\in C^\infty([0,1]\times N, N)$ such that $\psi(t,.)\in \Diff_0$ for all $t$ and $\psi(0,.)=\psi(1,.)=\Id$, and for any $\{\Sigma_t\}_{[0,1]}\in \Pi$, we have $\{\psi(t,.)(\Sigma_t)\}_{[0,1]}\in \Pi$. 

The \textit{width of $N$} associated with a saturated set $\Pi$ in the sense of Simon--Smith is defined to be 
$$W(N,\Pi) =\inf_{\{\Sigma_t\} \in \Pi}\sup_{t\in[0,1]} \mbox{Area}(\Sigma_t).$$
Given a sequence of smooth sweepouts $\{\{\Sigma^i_t\}\}_i\subset \Pi$, denote by $\mathbf{\Lambda}(\{\{\Sigma^i_t\}\}_i)$ the set 
\begin{align*}
\{V\in \mathcal{V}_2(N) ;\quad & \exists  \{i_j\} \to \infty, t_{i_j}\in [0,1] \\ 
\text{ such that }& \lim_{j\to \infty} \mathbf{F}(|\Sigma^{i_j}_{t_{i_j}}|, V) =0 \}.
\end{align*}
The sequence $\{\Sigma^i_t\} \in\Pi$ is \textit{minimizing} if $\lim_{i\to \infty} \max_{t\in[0,1]}\mbox{Area}(\Sigma^i_t)=W(N,\Pi)$, it is \textit{pulled-tight} if moreover any element $V\in \mathbf{\Lambda}(\{\{\Sigma^i_t\}\}_i)$ with $||V||(N)=W(N,\Pi)$ is stationary. By \cite[Proposition 4.1]{C&DL}, it is always possible to deform a minimizing sequence $\{\Sigma^i_t\} \in\Pi$ into a pulled-tight minimizing sequence.

\subsection{}\textbf{Existence of local min-max surfaces} \label{subsection:existence}


Let $N$ 
 be an oriented connected compact $3$-manifold with boundary, contained in a closed oriented 3-manifold $(M_0,g)$.

 The following theorem will be crucial for proving Pitts--Rubinstein's conjecture in Section \ref{Rubinstein conj}. The point is to show that in the setting of Simon--Smith \cite{Smith}, under certain conditions, a local min-max procedure gives a minimal surface \textit{inside the interior} $Int(N)$ of the domain $N$.

Before stating the theorem, let us introduce the notion of \emph{$H$-compatible splitting}. Let $H$ be a Heegaard splitting of the closed manifold $M_0$.
A triple $(W_0',W'_1,H')$ is called an $H$-compatible splitting if:
\begin{itemize}
\item 
$(W_0',W'_1,H')$ is a generalized Heegaard splitting of
$N$, so that in particular $\partial N = \partial_-W'_0 \cup \partial_-W'_1$ (here, $\partial_-W'_0$ and $\partial_-W'_1$ are  orientable embedded surfaces, not necessarily connected and possibly empty);
\item  $H'$ is isotopic to $H$ in $M$; 
\item $\partial_-W'_0$ and $\partial_-W'_1$ are stable minimal surfaces.
\end{itemize}

We are now ready to state the main result of this section:


\begin{thm} [Local Min-max]\label{smoothminmax}
Let $N$ 
 be an oriented connected compact $3$-manifold contained in a closed oriented 3-manifold $(M_0,g)$.
 Suppose that $\partial N$ is a strictly stable minimal surface in $(M_0,g)$. Let $\partial N = \Gamma_0 \cup \Gamma_1$ be a partition of the boundary components of $\partial N$ and let $H$ be a strongly irreducible Heegaard splitting of $M_0$.

Let $\Pi$ be the nonempty saturated set generated by all smooth sweepouts $\{\Sigma_t\}$ of $N$ such that 
\begin{itemize}
\item for $i\in\{0,1\}$, $\Sigma_i = \Gamma_i $,
\item for all $t\in(0,1)$, the surface $\Sigma_t$ is such that
there is  some $H$-compatible splitting $(W_0',W'_1,\Sigma_t)$ with $\partial_-W_i'=\Gamma_i$ for $i\in \{0,1\}$.
\end{itemize}

Then there exists a min-max sequence $\Sigma^j_{t_j}$ converging to $\sum_{i=1}^k m_i\Sigma^\infty_i$ as varifolds, where $m_i \in \N$ and $\Sigma^\infty_i\subset N$ are disjoint closed embedded connected minimal surfaces such that
$$ \sum_{i=1}^k m_i \mbox{Area}(\Sigma^\infty_i) = W(N,\Pi),$$
the index of $\bigcup_{i=1}^k \Sigma^\infty_i$ is at most $1$,
and there is a least one $i_0\in\{1,...,k\}$ such that
$$\Sigma^\infty_{i_0}  \subset  \interior(N).$$

\end{thm}
\begin{remark}\label{remark 6.3}
In the statement, the minimal surface $\bigcup_{i=1}^k \Sigma_i^\infty$ is obtained by Simon-Smith min-max theory. Hence, by Theorem \ref{simonsmith}, the multiplicity $m_i$ is even when $\Sigma_i^\infty$ is unorientable. 
\end{remark}

\begin{remark}
Condition $(iii)$ that $\G_0$ and $\G_1$ are strictly stable minimal surfaces can be replaced by 
a weaker condition that $\partial N$ is minimal and there exists a squeezing map $P_t$ 
from Section
\ref{sec:squeezing} in a small neighborhood of $\partial N$. 
For example, 
if $D$ is a 3-ball with smooth minimal stable (but not strictly stable) boundary and
there exists a foliation of the tubular neighborhood
of $\partial D$ by strictly mean concave spheres
(mean curvature points towards $\partial N$),
then a squeezing map $P_t$ can be constructed 
and Theorem \ref{smoothminmax} guarantees existence of 
 an embedded minimal 2-sphere in the interior of $D$. 
\end{remark}

\begin{proof}       
Since by Condition $(iii)$,
$\Gamma_0\cup\Gamma_1 = \partial N$ is a strictly stable minimal surface, we have
\begin{equation} \label{boundtrue}
\max \{\mbox{Area}(\Gamma_0),\mbox{Area}(\Gamma_1)\}< W(N,\Pi).
\end{equation}
In a somewhat different setting this was proved by F. Morgan and A. Ros in \cite{MorganRos}. We give another proof of (\ref{boundtrue}) in the Appendix.

By the maximum principle or using the squeezing map $P_t$ 
from Section
\ref{sec:squeezing}, we can also find a small $\bar{\delta}>0$ so that 
$$N_{\bar{\delta}} := N \cup \{x\in M_0; d(x,\partial N)\leq \bar{\delta}\} $$
is a strictly mean convex domain and if a closed minimal surface is contained in $N_{\bar{\delta}}$ then it is contained in $N$. The saturated set $\Pi$ naturally induces a saturated set $\Pi_{\bar{\delta}}$ associated with $N_{\bar{\delta}}$. It is then not difficult to check that for $\bar{\delta}$ small, $W(N_{\bar{\delta}},\Pi_{\bar{\delta}})=W(N,\Pi)$. If $\bar{\delta}$ is chosen small enough, by (\ref{boundtrue}), we can apply the version of the Simon--Smith theorem proved in \cite[Theorem 2.1]{MaNe} to get the existence of the varifold $V=\sum_{i=1}^k m_i\Sigma^\infty_i$, then the genus bound and the nature of convergence follow from \cite{Ketgenusbound}. The index of the union $\bigcup_{i=1}^k\Sigma^\infty_i$ is bounded by one according to Theorem 6.1 and paragraph 1.3 in \cite{MaNeindexbound}.

The goal is to show the existence of a component $\Sigma_{i_0}^\infty$ inside the interior $\interior(N)$. The arguments will share some similarities with \cite[Deformation Theorem C]{MaNeindexbound} (in particular  several constructions in its proof will be useful), however we have to deal with \emph{smooth isotopies}. On the other hand, here we only need to rule out the case where the whole min-max minimal surface is included in the boundary. 
Let $\{\{\Sigma^i_t\}\}_i \subset \Pi$ be a pulled-tight minimizing sequence and suppose by contradiction that for all $V\in \mathbf{\Lambda}(\{\{\Sigma^i_t\}\}_i)$ with smooth support and mass $W(N,\Pi)$, $\spt(V)$ is included in $\partial N$. 
Given a sweepout in $\Pi$, we orient $\Sigma_t$ with the unit normal $\nu$ pointing towards $\Gamma_1$. In $N$, each $\Sigma_t$ hence bounds a manifold with boundary $B(\Sigma_t)$ such that $\nu$ is the outward normal.

Denote by $S_1$, ..., $S_p$ the connected components of $\partial N = \Gamma_0\cup \Gamma_1$. Let $V$ be a varifold with mass $W(N,\Pi)$, of the form:
\begin{equation} \label{form}
V = m_1|S_1|+...+m_p |S_p|,
\end{equation}
where $m_i$ are nonnegative integers. We call the finite set of such $V$ by $\hat{\mathcal{V}} $. Let $\{\Sigma_t\}_{t\in[0,1]}$ be a smooth sweepout in $\Pi$. We are applying the following discussion to the pulled-tight minimizing sequence $\{\{\Sigma^i_t\}\}_i$ so we are assuming $\{\Sigma_t\}$ to be one of these sweepouts. We can in particular make $\max_t \mbox{Area}(\Sigma_t)- W(N,\Pi)$ arbitrarily small. Given $\alpha>0$, consider the subset 
$$\mathbf{V}_{\alpha}:=\{t\in[0,1] ; \quad \exists V \in \hat{\mathcal{V}} , \quad\mathbf{F}(|\Sigma_t|, V) \leq \alpha\}.$$

The rough idea of the remainder of the proof is to construct from \emph{part} of the sweepout $\{\Sigma_t\}$ another sweepout $\{\hat{\Sigma}_t\}$ for which the corresponding $\mathbf{V}_{\eta}$ is empty, where $\eta>0$ is a constant depending on $\alpha$ and $N$. A technical point which is already in Claims 1-4 of the proof of \cite[Deformation Theorem C]{MaNeindexbound} is that we will make sure that the surfaces $\hat{\Sigma}_t$ are not close to any stationary integral varifold which was far from the surfaces $\Sigma_t$. Without loss of generality, we suppose that $\mathbf{V}_{\alpha}$ is non-empty 
and is a finite union of closed intervals. If $\alpha$ is sufficiently small, then for any $t\in \mathbf{V}_{\alpha}$, $\Sigma_t$ bounds $B(\Sigma_t)$ which has volume either close to $0$ or close to $\Vol(N)$. Let $[a_1,b_1]$,...,$[a_q,b_q]$ be the intervals in $\mathbf{V}_{\alpha}$ such that $B(\Sigma_t)$ has volume close to $0$, where $a_1\leq b_1<...<a_q \leq b_q$. We have $b_q<1$ since $\alpha$ is small enough. 
Let $[c_1,d_1]$,...,$[c_{q'},d_{q'}]$ be the intervals composing $\mathbf{V}_{\alpha} \cap (b_q,1]$. On these intervals the volume of $B(\Sigma_t)$ is close to $\Vol(N)$. Then $\{\hat{\Sigma}_t\}$ will be constructed from the restriction $\{\Sigma_t\}_{t\in [b_q,c_1]}$ by appropriately closing its ends, i.e. by deforming $\Sigma_{b_q}$ and $\Sigma_{c_1}$ respectively to $\Gamma_0$ or $\Gamma_1$, thanks to the interpolation result Theorem 
\ref{maininterpolation}. This new sweepout $\{\hat{\Sigma}_t\}$ will not necessarily be homotopic to $\{\Sigma_t\}$ (contrarily to the analoguous situation in \cite[Deformation Theorem C] {MaNeindexbound}) but it will be a smooth sweepout and thus in the family $\Pi$. In the sequel we will only explain the case where both intervals of the form $[a_i,b_i]$ and $[c_i,d_i]$ exist. When there is no interval of the form $[a_i,b_i]$, but there are intervals of the form $[c_i,d_i]$ we will only need to deform $\Sigma_{c_1}$ to $\Gamma_1$ (and similarly if there is no interval of the form $[c_i,d_i]$, but there are intervals of the form $[a_i,b_i]$). If $\mathbf{V}_{\alpha}$ is empty then we do not modify the sweepout $\{\Sigma_t\}$.



Let $N_h(\partial N)$ be an $h$-tubular neighborhood of $\partial N = \Gamma_0\cup \Gamma_1$, where $h>0$ is smaller than $h_0$ given by Theorem \ref{maininterpolation}. Let $\alpha_1>0$ be such that if for $V\in \hat{\mathcal{V}}$, a stationary integral varifold $Z$ has $\spt(Z)\subset \spt(V)$, $\mathbf{M}(Z)=\mathbf{M}(V)$ but $Z\neq V$, then there is a connected component $\Omega^Z$ of $N_h(\partial N)$ so that $||Z||(\Omega^Z) > ||V||(\Omega^Z) +\alpha_1$. The $\alpha$ considered in the previous paragraph will then be taken independent of $\{\Sigma_t\}$ and at least small enough so that the following holds: if for $V\in \hat{\mathcal{V}}$, a varifold $Z'\in \mathcal{V}_2(M_0)$ (not necessarily stationary or integral) has $||Z'||(\Omega^{Z'}) > ||V||(\Omega^{Z'}) +\alpha_1$ for a connected component $\Omega^{Z'}$ of $N_h(\partial N)$, then $\mathbf{F}(Z',V)>2\alpha$ (see \cite[Subsections 5.11 and 5.14]{MaNeindexbound}).

If $\alpha$ was chosen small enough then $\Sigma_{b_q}$ satisfies the following: 
$$\mbox{Area}(\Sigma_{b_q}\backslash N_h(\partial{N})) < \epsilon_0/2$$
for the $\epsilon_0$ given by Theorem \ref{maininterpolation}. 
We can now apply the interpolation result Theorem \ref{maininterpolation} to $ \Sigma_{b_q}$ since by assumption 
there is  some $H$-compatible splitting $(W_0',W'_1,\Sigma_{b_q})$ with $\partial_-W_i'=\Gamma_i$ for $i\in \{0,1\}$.
We can thus deform $ \Sigma_{b_q}$ into an embedded surface close to $\Gamma_0$ in the $\mathbf{F}$ topology, with area control. Let $\{Y_t\}_{t\in [0,1]}$ be this continuous family of surfaces interpolating between $\Sigma_{b_q}$ and $Y_1$ close to $\Gamma_0$. On can deform $Y_1$ to $\Gamma_0$ continously in the $\mathbf{F}$-topology. By arguing as in Claims 1-4 of \cite{MaNeindexbound}, one can choose $\delta>0$ small enough so that for a positive $\eta<\alpha/2$ independent of the index $i$ of $\{\Sigma^i_t\}$ and $\delta$ when they are respectively large and small enough,  
$$
\forall t\in[0,1],\quad\mathbf{F}(|Y_t|,V') > \eta
$$ 
for all stationary integral varifold $V'$ with mass $W(N,\Pi)$. To see this remember that in the proof of Proposition \ref{stacking2}, we first applied Lemma \ref{contracting_hair}, then we could use the squeezing maps $P_t$ to push the surface arbitrarily close to $\partial N$, and finally we applied the stacking deformations. Along the deformations of Propositions \ref{stacking2} and \ref{movehandles}, we did not increase the area of the deformed surface by more than $\delta$ in the $h$-tubular neighborhood of any components of $\partial N$. Compared to \cite{MaNeindexbound}, in their notations $H^1_i$ corresponds to our deformation in Lemma \ref{contracting_hair2}, $H^2_i$ corresponds to pushing the surface with the squeezing maps $P_t$, and $H^3_i$ corresponds to our deformations in Propositions \ref{stacking2} and \ref{movehandles}. The analogues of Claims 1-4 in \cite{MaNeindexbound} hold true in our situation (the choice of $\alpha_1$ and $\alpha$ come into play here).

The above procedure can be realized in a symmetric way for $t=c_1$ (remember that $[c_1,d_1]$,...,$[c_{q'},d_{q'}]$ are the intervals composing $\mathbf{V}_{\alpha} \cap (b_q,1]$): we can deform $\Sigma_{c_1}$ to $\Gamma_1$ with similar properties on the interpolation. As a result of concatenating and reparametrizing the successive deformations from $\Gamma_0$ to $Y_0=\Sigma_{b_q}$, from $\Sigma_{b_q}$ to $\Sigma_{c_1}$ (this part is given by $\{\Sigma_t\}_{t\in[0,1]}$), and from $\Sigma_{c_1}$ to $\Gamma_1$, we obtain a family denoted by $\{\hat{\Sigma}_t\}_{t\in[0,1]}$, satisfying for an arbitrary $\delta>0$ and for some $\eta<\alpha/2$ the following properties:
\begin{itemize}
\item $\{\hat{\Sigma}_t\} \in \Pi$,
\item $\max_t \mbox{Area}(\hat{\Sigma}_t) \leq \max_t \mbox{Area}({\Sigma}_t) + \delta$,
\item $\mathbf{F}(|\hat{\Sigma}_t|,V) > \eta$ for all $t\in[0,1]$ and all $V$ of the form (\ref{form}) with $\mathbf{M}(V)=W(N,\Pi)$,
\item $\mathbf{F}(|\hat{\Sigma}_t|,V') > \eta$ for all $t\in[0,1]$ and all stationary integral varifold $V'$ with $\mathbf{M}(V')=W(N,\Pi)$ which are not in $\mathbf{\Lambda}(\{\{{\Sigma}^i_t\}\}_i)$.
\end{itemize} 
We emphasize that this sweepout $\{\hat{\Sigma}_t\}$ is not a priori homotopic to the original sweepout $\{\Sigma\}_t$. Moreover the $\eta$ is independent of the index $i$ of $\{\Sigma^i_t\}_{t\in [0,1]}$ and of $\delta$.

Now we can conclude the proof. Let $\{\{\Sigma^i_t\}\}_i$ be a pulled-tight minimizing sequence and $\delta_i$ a sequence going to $0$. Transforming each sweepout of this sequence as above with parameter $\delta_i$ instead of $\delta$ (but keeping $\alpha>0$, $\eta$ fixed) produces a new pulled-tight minimizing sequence $\{\{\hat{\Sigma}^i_t\}\}_i\subset \Pi$. Note similarly to \cite[Deformation Theorem C]{MaNeindexbound} that by construction:
\begin{align} \label{inclusi}
\begin{split}
 & \mathbf{\Lambda}(\{\{\hat{\Sigma}^i_t\}\}_i) 
\cap \{\text{stationary integral varifolds of mass $W(N,\Pi)$}\} \\
\subset  \quad&  \mathbf{\Lambda}(\{\{{\Sigma}^i_t\}\}_i) 
\cap \{\text{stationary integral varifolds of mass $W(N,\Pi)$}\}.
\end{split}
\end{align} 
But by construction of $\{\{\hat{\Sigma}^i_t\}\}_i$, any varifold $V' \in \mathbf{\Lambda}(\{\{\hat{\Sigma}^i_t\}\}_i)$ with $||V'||(N)=W(N,\Pi)$ is $\eta$-far from any varifold of the form (\ref{form}). So the usual min-max theorem (see \cite{C&DL}) would produce a varifold in the first intersection in (\ref{inclusi}) with smooth support, i.e. a minimal surface whose area counted with multiplicity is $W(N,\Pi)$, and which is not entirely contained in the boundary $\partial N$. This contradicts our assumption on $\mathbf{\Lambda}(\{\{{\Sigma}^i_t\}\}_i)$ so the theorem is proved.

\end{proof}

\vspace{2em}

\section{Main existence theorem for minimal Heegaard splittings} \label{Rubinstein conj}



\subsection{}\label{corecore}\textbf{$H$-compatible splittings and $H$-cores}
\label{subsection:hcore}

Let $(M,g)$ be a connected closed oriented $3$-manifold.
In this subsection, we study  compact $3$-manifolds $N\subset M$ containing a Heegaard splitting of $M$, and whose boundary $\partial N$ 
is a stable minimal surface in $(M,g)$.
Roughly, the goal of this subsection is to find, under certain conditions, an innermost compact manifold $N$ with such properties.

Let $(W_0,W_1,H)$ be a Heegaard splitting of $M$. Recall that the notion of $H$-compatible splitting was introduced in Subsection \ref{subsection:existence}.
Note that $(W_0,W_1,H)$ is itself an $H$-compatible splitting. 

Set 
\begin{align*}
\mathcal{S}_g(H) := \{&\text{ 3-manifolds $N\subset M$ such that $N=W_0'\cup W'_1$}\\ 
&\text{for a certain $H$-compatible splitting $(W_0',W'_1,H')$}\}
\end{align*}
The inclusion relation $\subset$ gives a partial order on $\mathcal{S}_g(H)$. An element $N$ is then \emph{minimal} in the sense of this partial order (not to be confused with the notion of minimal surface) if there is no element $\tilde{N} \in \mathcal{S}_g(H)$ such that $\tilde{N}\subset  N$ and $\tilde{N}\neq N$. Besides, elements of $\mathcal{S}_g(H)$ are connected.

\begin{lemma} \label{max spheres} 
Let $(M,g)$ be a closed oriented 3-manifold.
Suppose that $H$ is a strongly irreducible Heegaard splitting of $M$. Then 
\begin{enumerate}
\item either $H$ is isotopic to a stable minimal surface, \label{one}
\item or $H$ is isotopic to the boundary of a tubular neighborhood of a non-orientable minimal surface $\Sigma \subset M$ with a vertical handle attached, and $\Sigma$ has stable oriented double cover, \label{two}
\item or for any $N\in\mathcal{S}_g(H)$, there exists a minimal element $N_{min}\in\mathcal{S}_g(H)$ such that 
$$N_{min}\subset  N.$$
\end{enumerate}

\end{lemma}

\begin{proof}

Let $N=N_0\in\mathcal{S}_g(H)$, (which could be $N=M$).  Consider a sequence $\{N_i\}\subset \mathcal{S}_g(H)$ such that 
$$...\subset N_2 \subset N_1 \subset N_0 \text{ and}$$ 
\begin{equation*} \label{limvol}
\Vol(N_{i+1}) \leq  \inf\{\Vol(N') : N' \subset N_i, N' \in \mathcal{S}_g(H)\}+\frac{1}{i+1}. 
\end{equation*}
{We can assume without loss of generality that the sequence does not terminate.}  We want to show that $\partial N_i$ converges subsequentially to a minimal surface. By stability of the minimal surfaces $\partial N_i$, \cite[Theorem 3]{Sc} gives a uniform upper bound $K$ on the norm of the second fundamental form of $\partial N_i$ (i.e. a ``curvature bound'').

{
Consider the normal exponential map
$$\partial N_i \times (-\delta_0,\delta_0)\to M$$
$$\exp_i^N:(x,s) \mapsto \exp_x(s\nu)$$
where $\nu$ is the choice of unit normal vector on $\partial N_i$ pointing inside $N_i$, and $\delta_0>0$ is a small positive number such that  $\exp_i^N$ is locally a diffeomorphism.
The uniform bound $K$ on the second fundamental form implies a uniform positive lower bound on $\delta_0$ depending only $K$ and $(M,g)$.
}

{
The strategy of the proof is to try to take a subsequential limit of the minimal surfaces $\partial N_i$. We will see that when $\lim_{i\to \infty}\Vol(N_i) = 0$, then either item (1) occurs (for instance $\partial N_i$ is the union of two connected components isotopic to $H$ and very close to each other)  or item (2) occurs (for instance $\partial N_i$ is an oriented surface converging with multiplicity two to a non-orientable surface). Otherwise, when  $\lim_{i\to \infty}\Vol(N_i) > 0$, we will see that $N_i$ converges to a desired minimal element $N_{min}\in\mathcal{S}_g(H)$.}

{
To prove that $\partial N_i$ converges smoothly subsequentially, it suffices to show that their areas are uniformly bounded from above. Indeed, the  standard smooth compactness \cite[Theorem 3]{Sc} \cite[Proposition 7.14]{CM} for minimal surfaces with bounded second fundamental form and bounded area can then be employed.  
Note that in our case, the sequence of stable minimal surfaces is nested, i.e. for each $\partial N_i$, all $\partial N_j$ are on one side of $\partial N_i$ for $j>i$. 
}

{
By compactness, the sequence of sets $N_j$ converges in the Hausdorff metric to some limit closed set.
For $\mu >0$, let $i_\mu$ be such that for all $j\geq i_\mu$ and all $q \in \partial N_j$.
\begin{equation} \label{dist to Nimu}
    d(\partial N_{i_\mu}, q) < \mu.
\end{equation}
Fix $q \in \partial N_j$ and let $p$ be the closest point 
of $\partial N_{i_\mu}$ to $q$.
Note that we can choose $\mu$ sufficiently small compared to the curvature upper bound $K$ and the focal distance lower bound $\delta_0$, so that  
the following holds: there is an intrinsic embedded geodesic ball $B_{\partial N_{i_\mu}}(p,r)$ in $\partial N_{i_\mu}$ centered at $p$, with radius $r>0$ depending only on $K$ and $(M,g)$, such that around $q$, $\partial N_j$ is a graph over $B_{\partial N_{i_\mu}}(p,r)$ via the normal exponential map. In other words, there is a smooth positive function 
$$f:B_{\partial N_{i_\mu}}(p,r)\to (0,\infty)$$
such that for all $x\in B_{\partial N_{i_\mu}}(p,r)$,
$$\exp_{i_\mu}^N(x,f(x)) \subset \partial N_j,$$
and 
$$ \exp_{i_\mu}^N(p,f(p)) = q.$$
If $\mu$ is chosen small enough with respect to $K$,  $\delta_0$ and $(M,g)$, by a continuity argument and using that $\partial N_j \cap \partial N_{i_\mu} = \varnothing$, we can realize larger and larger regions of $\partial N_j$ as local graphs over regions of $N_{i_\mu}$. By compactness, we eventually obtain connected components:
 $$C_{{i_\mu},1}\subset \partial N_{i_\mu},$$
 $$C_{j,1}\subset \partial N_j$$ 
so that $C_{j,1}$ is locally a graph over $C_{{i_\mu},1}$. Since all the surfaces $\partial N_i$ are closed and orientable, the only possibility is that there is a smooth positive function 
$$f_1: C_{{i_\mu},1} \to (0,\delta_0)$$
such that 
$$x\mapsto\exp_{i_\mu}^N(x,f(x))$$
yields a diffeomorphism between $C_{{i_\mu},1}$ and $C_{j,1}$. By the curvature bound $K$, necessarily the function $f_1$ has uniformly bounded $C^1$-norm. By \eqref{dist to Nimu}, the same argument applies to each connected component of $\partial N_j$.
}


{
We conclude that each connected component of $\partial N_{j_k}$ is a graph over some component of $\partial N_{i_\mu}$ of some function with uniformly bounded $C^1$-norm. Since each  $N_{j_k}$ is connected, the number of connected components of $\partial N_{j_k}$ is at most twice the number of components of $\partial N_{i_\mu}$. Combined with the uniform bound on the $C^1$-norm, 
$$\mbox{Area}(\partial N_{j_k}) \leq \kappa.\mbox{Area}(\partial N_{i_\mu})$$
for all $k\geq0$, for some $\kappa>0$ depending only on $K$, $\mu$, $(M,g)$. This shows the claim that $\partial N_j$ have uniformly bounded area.
}

{
We can now apply the standard compactness results \cite[Proposition 7.14]{CM}. The sequence $\partial N_i$ subsequentially converges smoothly to an embedded minimal surface $\Sigma$. 
By definition of $\mathcal{S}_g(H)$, each $N_i$ is connected and contains a (connected) Heegaard splitting of $M$.
When the limit of $\Vol(N_{i})$ is zero then by definition of smooth convergence and connectedness of $N_i$, $\partial N_i$ is the graph of a 2-valued function over $\Sigma$. The convergence is said to occur with multiplicity two.
Either $\Sigma$ is an oriented stable minimal surface isotopic to $H$ or $\Sigma$ is a connected non-orientable minimal surface bounding a compression body on one side. In the non-orientable case, the oriented double cover is stable because the convergence of $\partial N_i$ to $\Sigma$ occurs with multiplicity two, and so a standard argument produces a nontrivial positive Jacobi field over the double cover, see for instance \cite[Proof of Theorem 2.3]{Sharp}. By strong irreducibility and Proposition \ref{strongirreduce}(i), the Heegaard splitting $H$ is isotopic to this double cover with a vertical handle attached. 
When $\lim_{i\to \infty}\Vol(N_{i})$ is not zero, $\Sigma$ bounds the desired minimal element $N_{min}$ of $\mathcal{S}_g(H)$.
}

\end{proof}

It will be convenient to introduce this definition:

\begin{defi} \label{coredef}
Let $M$ be as above and $H$ be a Heegaard splitting. An \textit{$H$-core} of $M$ is a minimal element $\mathfrak{C}$ of $\mathcal{S}_g(H)$. 

Let $C$ be a positive real number. A \textit{$C$-bounded $H$-core} of $M$ is an element $\mathfrak{C}$ of $\mathcal{S}_g(H)$ such that $\mbox{Area}(\partial \mathfrak{C})\leq C$, and which is minimal among elements $N$ of $\mathcal{S}_g(H)$ satisfying $\mbox{Area}(\partial N)\leq C$.

\end{defi}

We can reformulate Lemma \ref{max spheres} as follows:

\begin{coro} \label{existence core}

{Let $(M,g)$ be a closed oriented 3-manifold} and let $H$ be a strongly irreducible Heegaard splitting. 

\begin{enumerate}
\item Suppose that $H$ is 
\begin{itemize}
\item neither isotopic to a stable minimal surface 
\item nor isotopic to the boundary of a tubular neighborhood of a non-orientable minimal surface $\Sigma \subset M$ with a vertical handle attached, where $\Si$ has stable oriented double cover.
\end{itemize}
Then, for any $N\in\mathcal{S}_g(H)$, there exists an $H$-core $\mathfrak{C}$ of $M$ such that
$$\mathfrak{C} \subset N.$$ 

\item Fix $C>0$. Suppose that $H$ is 
\begin{itemize}
\item neither isotopic to a stable minimal surface of area at most $C$
\item nor isotopic to the boundary of a tubular neighborhood of a non-orientable minimal surface $\Sigma \subset M$ with a vertical handle attached, where $\Si$ has stable oriented double cover and area at most $C$.
\end{itemize}
Then, for any $N\in\mathcal{S}_g(H)$, there exists a $C$-bounded $H$-core $\mathfrak{C}$ of $M$ such that
$$\mathfrak{C} \subset N.$$
\end{enumerate}
\end{coro}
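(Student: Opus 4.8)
The plan is to deduce both statements from Lemma \ref{max spheres}. For the first statement, note that $M$ itself lies in $\mathcal{S}_g(H)$, since the original triple $(W_0,W_1,H)$ is an $H$-compatible splitting. Applying Lemma \ref{max spheres} with $N=M$, the two hypotheses exclude alternatives (1) and (2) of that lemma, so alternative (3) must hold: there is a minimal element $N_{min}\in\mathcal{S}_g(H)$ with $N_{min}\subset M$, which by Definition \ref{coredef} is an $H$-core of $M$.

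For the $C$-bounded statement I would rerun the proof of Lemma \ref{max spheres} inside the subfamily $\mathcal{S}^C:=\{N\in\mathcal{S}_g(H):Area(\partial N)\leq C\}$, which is nonempty since $C>Area(\partial M)$ forces $M\in\mathcal{S}^C$. Starting from $N_0=M$ (or from any prescribed element of $\mathcal{S}^C$), I would choose a nested sequence $\cdots\subset N_2\subset N_1\subset N_0$ with $N_i\in\mathcal{S}^C$ and
$$\Vol(N_{i+1})\leq\inf\{\Vol(N'):N'\in\mathcal{S}^C,\ N'\subset N_i\}+\tfrac{1}{i+1}.$$
Each $\partial N_i$ is a stable minimal surface, so as in Lemma \ref{max spheres} the estimate of \cite{Sc} bounds the second fundamental form uniformly and hence the focal distance from below; the area bound $Area(\partial N_i)\leq C$ now holds by construction, so the nested-graph estimate used in Lemma \ref{max spheres} to control areas is not even needed. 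Passing to a subsequence, $\partial N_i$ converges to a smooth embedded minimal surface $\Sigma$, and $Area(\Sigma)\leq\liminf_i Area(\partial N_i)\leq C$ by lower semicontinuity of area.

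Let $\ell=\lim_i\Vol(N_i)$. If $\ell=0$ then, exactly as in Lemma \ref{max spheres}, $\Sigma$ is either a stable minimal surface isotopic to $H$, or a non-orientable minimal surface with stable oriented double cover such that $H$ is isotopic to $\partial(T_\epsilon(\Sigma))$ with a vertical handle attached; in either case the relevant surface has area at most $C$ (in the non-orientable case $Area(\Sigma)\leq\tfrac{1}{2}C$, since $\partial N_i$ is graphically close to $S_d(\Sigma)$), contradicting the hypothesis. Hence $\ell>0$, in which case $\Sigma$ bounds a region $\mathfrak{C}:=\bigcap_i N_i\in\mathcal{S}^C$. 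To see that $\mathfrak{C}$ is minimal within $\mathcal{S}^C$, suppose $\widetilde N\in\mathcal{S}^C$ with $\widetilde N\subsetneq\mathfrak{C}$; then $\widetilde N\subset N_i$ for every $i$, so $\Vol(\widetilde N)\geq\lim_i\Vol(N_i)=\ell=\Vol(\mathfrak{C})$, forcing $\widetilde N=\mathfrak{C}$, a contradiction. Thus $\mathfrak{C}$ is a $C$-bounded $H$-core of $M$.

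The step I expect to be the main obstacle is precisely the one already handled in Lemma \ref{max spheres}: verifying that the limit region $\mathfrak{C}$ still carries an $H$-compatible splitting structure, so that $\mathfrak{C}\in\mathcal{S}_g(H)$. This relies on the convergence $\partial N_i\to\Sigma$ being smooth and, on the part of $\Sigma$ bounding $\mathfrak{C}$, of multiplicity one: once $\partial N_i$ is $C^2$-close to this part of $\Sigma$ it is isotopic to it through a collar, so the compression-body decomposition of $N_i$ transports to $\mathfrak{C}$, while any components of $\partial N_i$ enclosing vanishingly thin twisted interval bundles (contributing the multiplicity-two pieces of the varifold limit) are simply discarded. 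Apart from this, the argument is a routine transcription of Lemma \ref{max spheres}, with the constraint $Area(\partial N)\leq C$ playing the role that volume convergence played there in keeping areas bounded.
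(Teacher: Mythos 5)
Your proposal matches the paper's proof: the first statement is obtained exactly as in the paper by applying Lemma \ref{max spheres} with $N=M$, and your second part is just a detailed transcription of the paper's one-line argument that, once the area bound $Area(\partial N)\leq C$ is imposed, the curvature estimates of \cite{Sc} and the usual compactness for stable minimal surfaces let one run the nested volume-minimizing scheme of Lemma \ref{max spheres} within the constrained family and extract a $C$-bounded $H$-core. No essential difference in approach.
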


\begin{proof}

The first part immediately follows by Lemma \ref{max spheres}.
The second part is easier to prove since the stable minimal surfaces in consideration have area bounded by $C$ and so it follows from the curvature bound of \cite[Theorem 3]{Sc} and the usual smooth compactness result for embedding minimal surfaces with bounded second fundamental form and area \cite[Proposition 7.14]{CM}.
\end{proof}

\subsection{}\textbf{Main theorem}

We finally prove our main theorem, Theorem \ref{introthm}. It will readily follow from Theorem \ref{positivegenus} and Theorem \ref{sphere case} below.

A Riemannian metric $g$ is said to be bumpy if no smooth immersed closed minimal hypersurface has a non-trivial Jacobi vector field. White showed that bumpy metrics are generic in the Baire sense \cite{Whitebumpy,Whitebumpy2}.

We first start with the main case where $M$ is not diffeomorphic to a 3-sphere.
\begin{thm} \label{positivegenus}
Let $(M,g)$ be a closed oriented $3$-manifold not diffeomorphic to the $3$-sphere. Suppose that there is a strongly irreducible Heegaard splitting $H$. Then 
\begin{enumerate}[label=\roman*)]
\item either $H$ is isotopic to a minimal surface $\Sigma_1$ of index at most one, 
\item or isotopic to the boundary of a tubular neighborhood of a non-orientable minimal surface $\Sigma_2$ with a vertical handle attached, and $\Si_2$ has stable oriented double cover.
\end{enumerate}

If moreover the metric is bumpy, then there is an oriented minimal surface of index 1 isotopic to $\Sigma_1$ in the first case, and isotopic to the boundary of a tubular neighborhood of $\Sigma_2$ in the second case. In particular its genus is equal to either $genus(H)$ or $genus(H)-1$.

\end{thm}

Before proving Theorem \ref{positivegenus}, we need two lemmas. 
$H$-cores and $C$-bounded $H$-cores in $(M,g)$ are defined in Definition \ref{coredef}, and exist whenever $H$ is not isotopic to a stable minimal surface or to the boundary of a tubular neighborhood of a non-orientable minimal surface $S$ with a vertical handle attached, where $S$ has stable oriented double cover, by Corollary \ref{existence core}. 

The next elementary lemma ensures that $H$-core in $(M,g)$ can be approximated by $C$-bounded $H$-cores in $(M,g_m)$ where $g_m$ are bumpy metrics approximating $g$.

\begin{lemma} \label{approximation}

Let $(M,g)$ and $H$ be as in Theorem \ref{positivegenus}.
Suppose that $H$ is not isotopic to a stable minimal surface or to the boundary of a tubular neighborhood of a non-orientable minimal surface $S$ with a vertical handle attached, where $S$ has stable oriented double cover. Let $\mathfrak{C}$ be an $H$-core in $(M,g)$. Then for all constants $C>0$ large enough, there is a sequence of bumpy metrics $g_m $ converging smoothly to $g$ and $C$-bounded $H$-cores $ \mathfrak{C}_{m}$ with respect to $g_m$, such that $\partial \mathfrak{C}_{m}$ converges smoothly to $\partial \mathfrak{C}$ with respect to $g$.
\end{lemma}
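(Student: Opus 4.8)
The plan is to combine White's density theorem for bumpy metrics with the compactness theory for stable minimal surfaces, being careful that the notion of $H$-core (resp. $C$-bounded $H$-core) behaves continuously under perturbation of the metric. First I would fix, once and for all, a constant $C > 0$ large enough that $Area(\partial\mathfrak{C}) < C$ and that $C$ dominates the area of every stable minimal surface one could meet below (this is possible because, by Corollary \ref{existence core}, the $H$-core $\mathfrak C$ exists, and its boundary has a definite area which we may enlarge $C$ past). I would also fix a small tubular neighborhood $U$ of $\partial\mathfrak{C}$ which is foliated by the graphs of the first eigenfunction of the stability operator, as in Section \ref{sec:squeezing}, so that $\partial\mathfrak{C}$ is strictly stable and serves as a one-sided barrier; shrinking $\mathfrak{C}$ slightly we may in fact assume $\partial\mathfrak{C}$ is strictly stable (if some component is only weakly stable, one perturbs the metric first or argues componentwise — I will take strict stability as part of the setup, which is consistent with how cores are used elsewhere in the paper).

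Next I would invoke White's theorem (\cite{Whitebumpy}, \cite{Whitebumpy2}) that bumpy metrics are $C^\infty$-generic, to pick a sequence $g_m \to g$ of bumpy metrics. For each $m$, Corollary \ref{existence core} applied with the metric $g_m$ gives a $C$-bounded $H$-core $\mathfrak{C}_m$ for $g_m$ — here I should first check that $H$ is still not isotopic (with respect to $g_m$) to a stable minimal surface of area $\leq C$ nor to the relevant twisted-bundle boundary; if it were for infinitely many $m$, then by Schoen's curvature estimates \cite{Sc} those $g_m$-minimal surfaces would converge as $m\to\infty$ to a $g$-minimal surface of the excluded type, contradicting the hypothesis on $H$. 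Now comes the selection step: among all $C$-bounded $H$-cores for $g_m$ I would like to choose $\mathfrak C_m$ so that $\partial\mathfrak C_m$ lies in the neighborhood $U$ of $\partial\mathfrak C$. To arrange this, note that for $g_m$ close to $g$ the strictly stable surface $\partial\mathfrak C$ perturbs to a nearby $g_m$-minimal surface $\partial\mathfrak C^{(m)}$ (by the implicit function theorem, using that the Jacobi operator of $\partial\mathfrak C$ is invertible by strict stability), and this perturbed surface bounds a region $N_m \in \mathcal{S}_{g_m}(H)$ with $Area(\partial N_m) \leq C$ and $\partial N_m \subset U$. Running the minimization in Lemma \ref{max spheres} (equivalently Corollary \ref{existence core}) \emph{inside} $N_m$ rather than inside all of $M$ produces a $C$-bounded $H$-core $\mathfrak C_m \subset N_m$; since $N_m$ is already essentially a thin thickening of $\partial\mathfrak C$, any stable $g_m$-minimal surface contained in $\bar N_m$ with the right topology must, by the maximum principle against the foliation of $U$, be trapped near $\partial\mathfrak C$, so $\partial \mathfrak C_m \subset U$.

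Finally I would extract the convergence. The surfaces $\partial\mathfrak C_m$ are $g_m$-stable minimal with area bounded by $C$ and, as just argued, contained in the fixed neighborhood $U$ of $\partial\mathfrak C$; Schoen's estimates \cite{Sc} give uniform curvature bounds, so a subsequence converges smoothly (with finite multiplicity) to a $g$-stable minimal surface $\Sigma_\infty \subset \bar U$. By the foliation of $U$ and the maximum principle, $\Sigma_\infty$ must coincide with $\partial\mathfrak C$ (each sheet of the limit is a leaf of the foliation that is minimal, forcing it onto $\partial\mathfrak C$), and the multiplicity is one because $\partial\mathfrak C$ is strictly stable and two-sided with a sign-changing normal graph foliation — a multiplicity-$\geq 2$ limit would force a nontrivial Jacobi field on $\partial\mathfrak C$, contradicting strict stability. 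Hence $\partial\mathfrak C_m \to \partial\mathfrak C$ smoothly, as claimed. The main obstacle I anticipate is precisely this last point: ensuring that the $C$-bounded $H$-cores for the perturbed metrics do not "jump" to some other stable minimal surface far from $\partial\mathfrak C$ — this is why it is essential to localize the minimization of Lemma \ref{max spheres} to the thin region $N_m$ carved out by the implicit-function-theorem perturbation of the strictly stable barrier $\partial\mathfrak C$, rather than re-running it globally on $M$.
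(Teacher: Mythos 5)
Your overall strategy (approximate $g$ by bumpy metrics, produce $C$-bounded cores for $g_m$ localized near $\partial\mathfrak C$ by a barrier, then pass to the limit with Schoen's estimates) is the right flavor, but there are two genuine gaps. The first and most serious is your standing assumption that $\partial\mathfrak C$ is \emph{strictly} stable for $g$. The definition of an $H$-core only gives a stable minimal boundary, and this lemma is invoked precisely when $g$ is \emph{not} bumpy, so $\partial\mathfrak C$ may carry nontrivial Jacobi fields; ``shrinking $\mathfrak C$ slightly'' does not change this (and one cannot shrink a core anyway, since its boundary would no longer be minimal), while ``perturb the metric first'' is exactly the point that has to be carried out, not assumed. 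Everything downstream in your argument — the eigenfunction foliation of $U$, the implicit-function-theorem perturbation of $\partial\mathfrak C$ to a $g_m$-minimal surface, and the multiplicity-one claim at the end — uses this nondegeneracy. The paper's proof avoids it: one first multiplies $g$ by a conformal factor $\tilde\lambda_m\to 1$ keeping $\partial\mathfrak C$ minimal but making a small neighborhood $V_m$ strictly mean convex, then perturbs to a bumpy $g_m$ preserving mean convexity of $\partial V_m$, and obtains the nearby strictly stable $g_m$-minimal barrier $S_m\to\partial\mathfrak C$ by \emph{minimization} inside $V_m$ between the mean-convex walls, not by the implicit function theorem at a possibly degenerate surface.

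The second gap is your trapping claim ``$\partial\mathfrak C_m\subset U$ since $N_m$ is essentially a thin thickening of $\partial\mathfrak C$.'' The region $N_m$ bounded by the perturbed surface is an element of $\mathcal S_{g_m}(H)$, i.e.\ a perturbation of the \emph{full} core $\mathfrak C$, not a thin collar; a $C$-bounded $H$-core $\mathfrak C_m\subset N_m$ could a priori have boundary deep inside $N_m$, far from the foliated neighborhood $U$, and the maximum principle against the foliation says nothing there. Consequently your identification of the limit of $\partial\mathfrak C_m$ with $\partial\mathfrak C$ is not justified as written. The correct mechanism — and the one the paper uses — is the minimality of $\mathfrak C$ in the inclusion order: since $\mathfrak C_m$ lies in the region bounded by $S_m$ and $S_m\to\partial\mathfrak C$, the smooth subsequential limit of $\partial\mathfrak C_m$ (area $\le C$, stable, Schoen estimates) bounds an element of $\mathcal S_g(H)$ contained in $\mathfrak C$; its volume is bounded away from zero because $H$ is not isotopic to a stable minimal surface nor to the boundary of a twisted interval bundle with a vertical handle (the excluded cases), so by minimality of the $H$-core this element equals $\mathfrak C$ and the limit is $\partial\mathfrak C$. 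Your proposal never invokes this minimality of $\mathfrak C$, so even after repairing the strict-stability issue the concluding step would be incomplete.
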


\begin{proof}
By \cite[Propositions 2.3]{IMN} we can choose a sequence of positive functions $\tilde{\lambda}_m$ converging smoothly to $1$ so that the minimal surface $\partial \mathfrak{C}$ is strictly stable for $\tilde{\lambda}_m g$. 
A small tubular neighborhood $V_m$ of $\partial \mathfrak{C}$ then has boundary $\partial V_m$ which is strictly mean convex with respect to $\tilde{\lambda}_m g$. 
Then using the genericity of bumpy metrics proved in \cite{Whitebumpy}, we modify slightly the metric $\tilde{\lambda}_m g$ into $g_m$ so that any stable minimal surface of $(M,g_m)$ is strictly stable 
and $\partial V_m$ is still strictly mean convex. 
By minimizing the $g_m$-area of the surface $\partial \mathfrak{C}$ inside $V_m$ and applying Meeks--Simon--Yau \cite{MSY}, we find a strictly stable embedded minimal surface $S_m$ for $g_m$ inside $V_m$, so that $S_m = \partial A_m$ for some compact 3-manifold $A_m\subset M$,
$S_m$ converges smoothly to $\partial \mathfrak{C}$ as $m\to \infty$ and $A_m$ converges in the Hausdorff distance to the $H$-core  $\mathfrak{C}$. Note that for $m$ large enough, $A_m\in \mathcal{S}_{g_m}(H)$ (see Subsection \ref{subsection:hcore} for the notation $\mathcal{S}_{g}(H)$).

Let $C>0$ be a constant larger than twice the area of $\partial \mathfrak{C} \subset (M,g)$. For all $m$ sufficiently large, the assumptions of the second part of Corollary \ref{existence core} have to be satisfied for $H$ and $g_m$, otherwise we could take a smoothly converging subsequence of minimal surfaces by the curvature bound satisfied by stable minimal surfaces \cite[Theorem 3]{Sc}, and their limit would contradict our assumptions on $g$. Thus, the second part of Corollary \ref{existence core} gives the existence of a $C$-bounded $H$-core $\mathfrak{C}_m$ in $(M,g_m)$ with
$$\mathfrak{C}_m\subset A_m.$$
As $m\to \infty$, the boundary $\partial \mathfrak{C}_m$ converges smoothly by \cite[Theorem 3]{Sc} and the limit is $\partial \mathfrak{C}$, by minimality of the $H$-core $\mathfrak{C}$ in  $\mathcal{S}_{g}(H)$. For clarity, note that the volume of $\mathfrak{C}_m$ remains bounded away from $0$ as $m\to \infty$, since $H$ is not isotopic to a stable minimal surface or to the boundary of a tubular neighborhood of a non-orientable minimal surface $S$ with a vertical handle attached, where $S$ has stable oriented double cover.

\end{proof}

The next technical lemma will also be useful in the proof of Theorem \ref{positivegenus}. It enables us to sweepout certain compression bodies by a family of surfaces with controlled area whenever there are no ``obvious'' obstructions to do so. The notion of smooth sweepout is defined in Definition \ref{defsmoothsweep}.

\begin{lemma} \label{constructsweep}  
Suppose that $(N,g)$ is a compression body endowed with $g$ a bumpy metric, such that $\partial_+N = \Gamma$ is mean-convex (the mean curvature vector is pointing outwards) and $\partial_- N= \Gamma'$ is an embedded  stable minimal surface (which is not necessarily connected).
Then one of the following cases occurs:
\begin{enumerate}

\item either there is a compression body $\hat{N}$ strictly included in $N$ such that $\partial_+\hat{N} = \Gamma$ and $\hat{\Gamma}':=\partial_- \hat{N}$ is an embedded  stable minimal surface (which is not necessarily connected),

\item or there is a smooth sweepout $\{\hat{\Sigma}_t\}_{t\in[0,1]}$ of $N$ such that
$\hat{\Sigma}_0 = \Gamma$, $\hat{\Sigma}_1=\Gamma'$, $\hat{\Sigma}_t$ is isotopic to $\Gamma$ for $t\in[0,1)$, and moreover
$$\max_{t\in[0,1]} \mbox{Area}(\hat{\Sigma}_t) =\mbox{Area}(\Gamma).$$

\end{enumerate}

\end{lemma}

\begin{proof}

Let us minimize the area of an embedded surface $\Gamma_0$ close to $\Gamma$ (and with smaller area) inside $N$ using the $\gamma$-reduction of \cite{MSY} with the constraint that the ``area is less than $\mbox{Area}(\Gamma)$'' as in the proof of Lemma \ref{contracting_hair} (see \cite[Section 7]{C&DL}).
The boundary $\partial N$ acts as a barrier, and we get a stable minimal surface $S\subset N$ in the limit. This procedure yields a smooth homotopy $\{\Gamma_t\}_{t\in [0,1)}$ such that as $t\to 1$, $\Gamma_t$ is Hausdorff close to $S$ union some curves. The constraint on the area means that 
\begin{equation}\label{noot}
\max_{t\in [0,1)} \mbox{Area}(\Gamma_t) <\mbox{Area}(\Gamma).\end{equation}

Suppose first that $S$ is different from $\Gamma'$. Then by the definition of $\gamma$-reduction \cite{MSY} and the topological description of surgeries on strongly irreducible Heegaard splittings, Lemma \ref{sameside}, we deduce that $\Gamma$ and $S$ bound a compression body that we call $\hat{N}$, which is strictly contained in $N$. 
We set $\hat{\Gamma}'=S$. Item (1) holds in this case.

Suppose next that $S = \Gamma'$.
By the interpolation theorem Theorem \ref{maininterpolation}, and the definition of $\gamma$-reduction \cite{MSY}, as $t\to 1$, $\Gamma_t$ is close to $S$ union some curves ``without multiplicity'', meaning that after some surgeries on thin necks of $\Gamma_t$, the new surface is close to $S$ in the $\mathbf{F}$-topology (instead of just the Hausdorff topology).
By (\ref{noot}), it is then not hard to modify the family $\{\Gamma_t\}$ on both ends ($t$ close to $0$ and $t$ close to $1$) and obtain a smooth sweepout $\{\hat{\Sigma}_t\}$ as in item (2) of the lemma.


\end{proof}

\begin{proof}[Proof of Theorem \ref{positivegenus}]
Let $H$ be a strongly irreducible Heegaard splitting of $M$, which is necessarily irreducible by Haken's Lemma. First suppose that the metric is bumpy. 
Assume that $H$ is neither isotopic to a stable minimal surface, nor isotopic to
the boundary of a tubular neighborhood of a non-orientable minimal surface $S$ with a vertical handle attached, where $S$ has stable oriented double cover.

Under this assumption on $H$, by Corollary \ref{existence core} applied to $N=M$, there is an $H$-core $\mathfrak{C}$. 
By definition of $H$-cores, there is an $H$-compatible splitting $(W_0',W_1',H')$ such that $W_0'\cup W_1' =\mathfrak{C}$ (see Subsections \ref{subsection:existence} and \ref{subsection:hcore} for the definitions of $H$-compatible and $H$-core).
This means that there is at least one smooth sweepout of $\mathfrak{C}$, as defined in Definition \ref{defsmoothsweep}. 
Let $\Pi$ be the nonempty saturated set generated by all the possible smooth sweepouts $\{\Sigma_t\}_{t\in [0,1]}$ of $\mathfrak{C}$  
such that for any $t\in (0,1)$, there
is an $H$-compatible splitting $(W_0',W_1',\Sigma_t)$ of $\mathfrak{C}$, satisfying in particular $W_0'\cup W_1' =\mathfrak{C}$ (see Subsections \ref{mmdef} and \ref{subsection:existence} for definitions).

We apply the local min-max theorem, Theorem \ref{smoothminmax}, to the $H$-core $\mathfrak{C}$ and to $\Pi$. 
Strictly speaking, note that in the statement Theorem \ref{smoothminmax}, in order to define the saturated set $\Pi$, we fixed a partition $\Gamma_0 \cup \Gamma_1$ of the boundary of the compact 3-manifold. Here we do not fix a partition of the boundary $\partial \mathfrak{C}$ when defining $\Pi$. But there are only finitely many ways to partition the boundary 
\begin{equation}\label{partition}
\partial \mathfrak{C} = \Gamma_0 \cup \Gamma_1\end{equation}
into two distinct surfaces 
$\Gamma_0$ and $\Gamma_1$, and so Theorem \ref{smoothminmax} applies to each such partition.
In any case, by Theorem \ref{smoothminmax}, we get an embedded minimal surface 
$\Sigma \subset \mathfrak{C}$ with total area $W(\mathfrak{C},\Pi)$ after taking into account multiplicities, and which is not entirely contained in the boundary $\partial \mathfrak{C}$. We will divide our arguments into two cases: the non-orientable and orientable cases.

\textbf{Case where $\Sigma$ is non-orientable}: 
Suppose that  $\Sigma$ is non-orientable, then it has to arise via surgeries (Theorem 1.2 in \cite{Ketgenusbound}) and occur with multiplicity at least $2$ (Theorem \ref{simonsmith}).
Proposition \ref{strongirreduce} implies that there is a unique non-sphere component $\Sigma'$ of $\Sigma$ in the interior $\interior(\mathfrak{C})$. It is non-orientable. 
By Proposition \ref{strongirreduce}, $H$ is isotopic to $\partial N_\epsilon (\Sigma')$ with a vertical handle attached and the complement of a small tubular neighborhood $N_\mu(\Sigma')$ of $\Sigma'$ is a compression body by item (1) in Proposition \ref{strongirreduce}.
The oriented double cover $\tilde{\Sigma}'$ of $\Si'$ is necessarily unstable by our assumption on $H$ at the beginning of the proof. 
Since $\tilde{\Sigma}'$ is unstable, it follows that a small tubular neighborhood $N_\mu(\Sigma')$ has strictly mean concave boundary (with respect to the outward pointing normal unit vector) by bumpiness of the metric. We now apply Lemma \ref{constructsweep} to the complement of $N_\mu(\Sigma')$ inside $\mathfrak{C}$.
By definition of $H$-cores, only the second item of Lemma  \ref{constructsweep} is possible. 

Using \cite[Theorem 3.3]{KeMaNe}\footnote{Note that while an assumption in \cite[Theorem 3.3]{KeMaNe} is the positivity of the ambient Ricci curvature, the desired foliation of the tubular neighborhood about the non-orientable surface only uses the instability of the double cover of the non-orientable surface.}, we obtain a sweepout $\{\tilde{\Gamma}_t\}_{t\in [0,\epsilon]}$ of $N_\eta(\Sigma')$ that begins at the set $\tilde{\Gamma}_0$ which is a union of closed curves supported on $\Sigma'$ and ends at $\tilde{\Gamma}_\epsilon$ which consists of $\partial N_\eta(\Sigma')$ with an (arbitrarily) thin vertical handle attached.  Moreover, all of the areas in the sweepout are strictly less than $2\mbox{Area}(\Sigma')$. Then Lemma \ref{constructsweep} allows us to extend the  sweepout $\{\tilde{\Gamma}_t\}_{t\in [0,\epsilon]}$ to $\{\tilde{\Gamma}_t\}_{t\in [0,1]}$ which sweeps out the rest of $\mathfrak{C}\setminus N_\eta(\Sigma')$.  This gives rise to a smooth sweepout \footnote{In this sweepout the 2-sphere boundary components we obtain are all on one side in the partition \eqref{partition}, which our notion of saturated set allows.} $\{\tilde{\Gamma}_t\} \in \Pi$, such that 
$$W(\mathfrak{C}, \Pi) \leq \max_{t\in[0,1]} \mbox{Area}(\tilde{\Gamma}_t) < 2\mbox{Area}(\Sigma').$$
By Remark \ref{remark 6.3} on the multiplicity of nonorientable components, $2\mbox{Area}(\Sigma')\leq W(\mathfrak{C}, \Pi) $.  This is a contradiction with the inequality above. Thus, in fact $\Sigma$ cannot be non-orientable under our assumption on $H$.


\textbf{Case where $\Sigma$ is orientable}:
Suppose $\Sigma$ is orientable.
Note that we can suppose that there is no minimal 2-sphere in the interior of the $H$-core $\mathfrak{C}$. Indeed, suppose that there is a minimal 2-sphere $S''\subset \interior\mathfrak{C}$ which is stable.
Then since $M$ is irreducible, $S''$ bounds a $3$-ball on exactly one side (recall that $M$ is assumed to not be diffeomorphic to a $3$-sphere), so we can make sure that $H$ does not intersect $S''$, and then adding $S''$ to $\Gamma_i$ ($i=0$ or $1$) and maybe removing some sphere components of $\Gamma_0\cup \Gamma_1$, we clearly obtain a smaller $H$-compatible splitting.
This contradicts the definition of the $H$-core $\mathfrak{C}$.
Suppose, on the other hand, that there is a minimal 2-sphere $S'' \subset \interior(\mathfrak{C})$ which is unstable. We could 
{apply the result of Meeks--Simon--Yau \cite{MSY}} to minimize the area  of $S''$ in its isotopy class on the side not diffeomorphic to  a 3-sphere,
and obtain a limit inside $\interior(\mathfrak{C})$ {(note again that there is a non-trivial side because $M$ is not diffeomorphic to a 3-sphere by assumption). From this minimization procedure, we either get an $\mathbb{RP}^2$ component with \emph{stable} universal cover, in which case since the limit arose via surgeries (by Proposition \ref{strongirreduce}(1)) we obtain that $M$ is diffeomorphic to $\mathbb{RP}^3$. Moreover, $H$ is isotopic to a tubular neighborhood of this minimal $\mathbb{RP}^2$ with a vertical handle attached, which we have assumed is not the case. Or else we get a set of stable 2-spheres, which again contradicts the definition of the $H$-core $\mathfrak{C}$.}


The components of $\Sigma$ bound compression bodies or are included in the boundary of $\mathfrak{C}$, and none of the latter components which is not a 2-sphere is included in one of the compression bodies just mentioned.  Since $\Sigma$ is oriented it is obtained from surgeries by (Theorem 1.2 in  \cite{Ketgenusbound}), and it follows from Lemma \ref{sameside}, Proposition \ref{strongirreduce}, that all essential surgeries are on the same side and each component of $\Sigma$ with positive genus has multiplicity $1$. The union of these surfaces bound a union of disjoint compression bodies called $\mathcal{W}$.
Thus, by the conclusion of the local min-max theorem,  Theorem \ref{smoothminmax}, at least one component of $\Sigma$, called $\Sigma'$, is contained in the \emph{interior} $\interior(\mathfrak{C})$ and it is necessarily \emph{unstable}. To explain why $\Sigma'$ is unstable, note that if all components of $\Sigma$ are stable then we could find an $H$-core strictly contained in $\mathfrak{C}$ by applying Corollary \ref{existence core} to $N = M\setminus \mathcal{W}$, and this would contradict the definition of the $H$-core $\mathfrak{C}$. Next, this component $\Sigma'$ has positive genus (we are supposing that no minimal spheres are inside the $H$-core), occurs with multiplicity one.  
In fact, $\Sigma'$ has index  exactly 1, because the index bound of Marques--Neves says that the index of $\Sigma$ is at most 1 (see Theorem 1.2 and the discussion in Section 1.3 in \cite{MaNeindexbound}). 

We now claim that the component $\Sigma'$ is \emph{isotopic} to $H$. Suppose toward a contradiction that it is not.  Using again Meeks--Simon--Yau \cite{MSY}, we could then minimize area for surface isotopic to $\Sigma'$ on the side $M'\subset \interior(\mathfrak{C}) $ which is not a compression body, say the side of $\Gamma_1$ (this side exists because $M$ is not a $3$-sphere and $\Sigma'$ is not isotopic to $H$). 
Observe that $\Sigma'$ is incompressible inside $M'$,  as $\Sigma'$ is not isotopic to $H$ and comes from doing at least one essential neck-pinch surgery on $H$ on the side different from $M'$. 
By Meeks--Simon--Yau \cite{MSY} we obtain that a minimizing sequence for area in $M'$ converges to a (maybe disconnected) stable minimal surface $\Sigma''$ with integer multiplicities.  
Note that $\Sigma''$ is achieved after surgeries on $\Sigma'$ ($\gamma$-reductions in the terminology of \cite{MSY}), which itself was obtained by surgeries on $H$. 

Suppose first that a component of $\Sigma''$ is non-orientable. Then by Proposition \ref{strongirreduce}, its oriented double cover is stable and by adding a vertical handle we get an embedded surface isotopic to $H$. This case cannot occur by our assumption on $H$ at the beginning of the proof. 

Now suppose $\Sigma''$ is oriented: by Lemma \ref{sameside}, all essential surgeries must be on the side different from $M'$, and so there is a component $\Sigma'''$ of $\Sigma''$ that is isotopic to $\Sigma'$ (the other components being just 2-spheres) and is in the interior of $\mathfrak{C}$. But this would give an $H$-compatible splitting (see Subsection \ref{subsection:hcore}) strictly smaller than $\mathfrak{C}$, after replacing $\Gamma_0$ with $\Gamma_2:=(\Gamma_0 \cup\Sigma''')\backslash \Sigma'$ and considering the region bounded by $\Gamma_0 \cup\Gamma_2$. This contradicts the definition of the $H$-core $\mathfrak{C}$.
\vspace{1em}

To summarize what we have obtained so far after studying the two cases: when the metric is bumpy and when $H$ is neither isotopic to  a stable minimal surface, nor to the boundary of a tubular neighborhood of a non-orientable minimal surface $S$ with a vertical handle attached, where $S$ has stable oriented double cover, we have proved that $H$ is isotopic to an index $1$ minimal surface. 

\vspace{1em}

To finish the bumpy metric case, it remains to treat the cases where:
\begin{enumerate}[label=(\Alph*)]
\item  $H$ is isotopic to a stable minimal surface $T$,
\item $H$ is the boundary of a tubular neighborhood of a non-orientable minimal surface $S$ with a vertical handle attached, where $S$ has stable oriented double cover. 
\end{enumerate}
In case (A), by applying the local min-max theorem on one side of $T$,  Theorem \ref{smoothminmax}, and by the arguments of the first part of this proof, we obtain an oriented index 1 minimal surface in this handlebody isotopic to $H$.
In case (B), we verify that there is an index 1 minimal surface in $M$ of genus $genus(H)-1$ which is isotopic to the boundary of a tubular neighborhood of $S$. 
To see this, we pass to a double cover $\tilde{M}$ of $M$ so that $S$ lifts to an orientable stable minimal surface $\tilde{S}$ in $\tilde{M}$ bounding a handlebody whose interior projects diffeomorphically to $M\setminus S$.  By applying the local min-max theorem,  Theorem \ref{smoothminmax}, and by the arguments of the first part of this proof, we obtain an oriented index 1 minimal surface in this handlebody isotopic to $\tilde{S}$.  This surface descends to the desired embedded minimal surface in $M\backslash S$. This completes the proof of the theorem in the case where the metric is bumpy.

\vspace{1em}

Finally, if the metric is not bumpy, we use Lemma \ref{approximation} to approximate $g$ by bumpy metrics $g_m$. For $C>0$ large enough, for each $m$, there is a $C$-bounded $H$-core $\mathfrak{C}_m$ and $W(\mathfrak{C}_m, \Pi)$ converges to $W(\mathfrak{C}, \Pi)$ (see Definition \ref{coredef}). If $C$ is chosen bigger than $2W(\mathfrak{C}, \Pi)$, then we check without difficulty that the above arguments hold for a $C$-bounded $H$-core instead of an $H$-core for large $m$, since $\lim_m W(\mathfrak{C}_m,\Pi) = W(\mathfrak{C},\Pi)$. We get for each $m$ large enough 
\begin{itemize}
\item either a minimal surface area at most $C$, of index 0 or 1, and isotopic to $H$,
\item or a non-orientable minimal surface with area at most $C$, with stable oriented double cover, such that when we attach a vertical handle to the boundary of a tubular neighborhood, we get an embedded surface isotopic to $H$.
\end{itemize}
These minimal surfaces having area uniformly bounded by $C$, they subsequentially converges by Sharp (Theorem 2.3,  \cite{Sharp}) to a minimal surface $\Sigma^*$, smoothly except around at most one point. By strong irreducibility of the Heegaard splitting $H$ and Proposition \ref{strongirreduce}, either the limit is two-sided and the convergence is smooth and of multiplicity one, or the limit is one-sided, the oriented double cover of $\Sigma^*$ is stable (by a standard positive Jacobi field argument, see \cite[Proof of Theorem 2.3]{Sharp} for instance) and $H$ is isotopic to the boundary of a tubular neighborhood of $\Si^*$ with a vertical handle attached. So $\Sigma^*$ is as in the first part of the statement of Theorem \ref{positivegenus} and the theorem is proved.
\end{proof}

It remains to treat the case where $M$ is diffeomorphic to the 3-sphere, which is the easier case.
\begin{thm} \label{sphere case}
Let $(M,g)$ be a Riemannian $3$-sphere.  Then $M$ admits a minimal 2-sphere of index at most $1$. If the metric is assumed bumpy, then it admits an index $1$ minimal 2-sphere.
\end{thm}

\begin{proof}

For general non-bumpy metrics, this theorem was already known by the combination of Simon--Smith's theorem (Theorem \ref{simonsmith}) which produces some minimal 2-sphere, and the index bounds of Marques--Neves (Theorem 1.2 in \cite{MaNeindexbound}) which ensures that its index is at most $1$.

Let us assume that the metric is bumpy. Let $H$ be a genus $0$ Heegaard surface and consider any $H$-core guaranteed by Corollary \ref{existence core} (applied to $N=M$). Applying the local min-max theorem, Theorem \ref{smoothminmax}, we obtain a collection of pairwise disjoint minimal two-spheres in the interior of the $H$-core (some components may occur mulltiplicities larger than $1$).  By the index bounds of Marques--Neves (Theorem 1.2 in \cite{MaNeindexbound}), at most one component has index $1$ and all others are stable.  If they are all stable, this violates the minimality of the $H$-core.  Thus, there exists an unstable component of index $1$ contained in the interior of the $H$-core.   

\end{proof}

\vspace{2em}

\section{Applications} \label{sec8}

\subsection{Minimal Heegaard splittings with a priori bounds}
We use the term ``minimal Heegaard splitting" to denote a closed connected embedded minimal surface which is a Heegaard splitting.

In this subsection, by combining the main Theorem \ref{introthm} with elementary topological arguments, we obtain the existence of minimal Heegaard splittings with explicit information on their genus and index.

First, as an immediate corollary of Theorem 
\ref{positivegenus} and Theorem \ref{sphere case}
 we have: 

\begin{coro}
Conjecture \ref{rubinstein}, namely Pitts--Rubinstein's conjecture, is true.
\end{coro}


The next applications focus on spherical space forms, namely 3-manifolds homeomorphic to quotients of a 3-sphere $\mathbb{S}^3/\Gamma$. 
We note that, since a spherical space form admits a positive curvature metric, it does not contain incompressible surfaces so any Heegaard splitting of minimal genus is strongly irreducible by \cite{CassonGordon}.

Since the genus $1$ splitting of $\mathbb{RP}^3$ is strongly irreducible, the following is immediate from Theorem \ref{positivegenus}.
\begin{coro} \label{plot}
Any $(\mathbb{RP}^3,g)$ contains
\begin{itemize}
\item either a minimal Heegaard splitting of genus one and index at most one, 
\item or a minimal $\mathbb{RP}^2$ with stable oriented double cover.
\end{itemize}
If the metric $g$ is bumpy, then 
\begin{itemize}
\item either there is an index 1 Heegaard splitting of genus one,
\item or there is an index 1 minimal sphere.
\end{itemize}
\end{coro}

\begin{coro}\label{rp3corollary}
Any lens space not diffeomorphic to $\mathbb{S}^3$ or $\mathbb{RP}^3$ contains a minimal Heegaard splitting of genus one with index at most one. 
\end{coro}
\begin{proof}
The genus $1$ splitting of lens spaces $L(p,q)$ (with $p\geq 2$) is irreducible, and thus strongly irreducible by Casson-Gordon (Theorem 3.1 in  \cite{CassonGordon}) since such manifolds are non-Haken.  Thus by Theorem \ref{positivegenus}, if item i) occurs, the conclusion follows.  If item ii) holds, then by the genus bounds, the non-orientable surface is either an embedded projective plane or a Klein bottle.  But lens spaces $L(p,q)$ with $p\geq 3$ contain no embedded projective planes. Moreover, the boundary of a Klein bottle with a vertical handle added gives a surface of genus $2$, which is impossible as we began with a genus $1$ splitting. Thus, only item i) in Theorem \ref{positivegenus} occurs, as desired.
\end{proof}


Our next application concerns the positive scalar curvature case. There, we can often rule out the degenerate case, item (ii), of Theorem \ref{positivegenus} from occurring.

\begin{coro}\label{cor:topo bd}
Let $(M,g)$ be a spherical space form not diffeomorphic to $\mathbb{S}^3$ or $\mathbb{RP}^3$, and with positive scalar curvature. Let $ H$ be a strongly irreducible Heegaard splitting of $M$. Then $M$ contains an index 1 minimal surface isotopic to $H$.
\end{coro}

\begin{proof}
A classical property of positive scalar curvature metric is that any embedded orientable stable surface is a 2-sphere and any non-orientable minimal surface with stable oriented double cover is an $\mathbb{RP}^2$ (see \cite{SchoenYau}).
Since the spherical space form  $M$ is  not diffeomorphic to $\mathbb{S}^3$ or $\mathbb{RP}^3$, $M$ contains no embedded projective planes. 
The corollary is then a direct consequence of Theorem \ref{positivegenus}. 
\end{proof}

In \cite[Theorem 23]{Antoine}, the third-named author showed using Almgren--Pitts min-max theory that closed 3-manifolds with scalar curvature $R_g\geq 6$ contain a closed embedded minimal surface of area at most $4\pi$, with equality if and only if the 3-manifold is a round 3-sphere. This area bound, and the topological bound of Corollary \ref{cor:topo bd}, raise the following question:
\begin{question} \label{question}
If $(M,g)$ is a spherical space form not diffeomorphic to $\mathbb{S}^3$ or $\mathbb{RP}^3$, with $R_g\geq 6$, is there a strongly irreducible minimal Heegaard splitting with area less than $4\pi$?
\end{question}
In the special case where $\Ric_g>0$, this is true by combining \cite{MaNe} and \cite{KeMaNe} but it seems arduous to extend the method in \cite{MaNe} based on the Ricci flow to the case  where $R_g>0$ (see \cite{Antoinespheres}).

\subsection{How optimal is the main theorem?}

We now show that in general Theorem \ref{positivegenus} is optimal in the sense that both items can occur, by giving an example where the second case occurs in Corollary \ref{plot}.
Roughly speaking, we take the quotient of a capped off long cylinder.
\begin{prop}\label{example}
There exist a metric $g$ on $M\cong \mathbb{RP}^3$ with positive scalar curvature so that $M$ admits \emph{no}  minimal Heegaard torus of index at most $1$.
\end{prop}
\begin{proof}

Let $f: [-N,N] \rightarrow [0,1]$ satisfy:
\begin{enumerate}
    \item $f(r) = 1$ for $r\in [-(N-1),(N-1)]$;
    \item $f$ is smooth on $(-N,N)$ and continuous on $[-N,N]$;
    \item $f(-r)=f(r)$ for all $r\in [-N,N]$;
    \item $f(N)=f(-N)=0$;
    \item $f'(r)\leq 0$, $f''(r)\leq 0$ for all $r\in [0,N]$ and $f^{(k)}(N) = -\infty $ for $k \geq 1$.
\end{enumerate}

For $x_1 \in [-N,N]$ define $\tilde{M}_N \subset \mathbb{R}^4$ by 
$$\tilde{M}_N = \{ (x_1, x_2, x_3, x_4): x_2^2 +x_3^2 + x_4^2 = f(x_1)^2 \} $$

Let $\tau:\tilde{M}_N\to\tilde{M}_N$ be the isometry acting freely
\begin{equation}
\tau(x_1, x_2, x_3, x_4)=-(x_1, x_2, x_3, x_4), 
\end{equation}
and let $M_N:=\tilde{M}_N/\mathbb{Z}_2$ (as Riemannian manifolds) where $\mathbb{Z}_2$ is the group of isometries generated by $\tau$. 

Let $\pi: \tilde{M}_N\to M_N$ denote the projection map.  
The quotient $M_N$ is diffemorphic to $\mathbb{RP}^3$ and, choosing $f$ appropriately, we can arrange that the induced Riemannian metric on $M_N$ is smooth and satisfies
\begin{equation}
\mbox{Scal}_{M_N}\geq 1.
\end{equation}
For any $0\leq t\leq N-1$, define the surface:
\begin{equation}
P_t=\pi(\{t\}\times\mathbb{S}^2)\subset M_N.
\end{equation}
Note that $P_0$ is a minimal projective plane and $P_t$ is a minimal two-sphere for $t \in (0, N-1]$.

Suppose $M_N$ contains an embedded minimal Heegaard torus $\Sigma_N$ with index at most $1$.   By a result of Schoen-Yau (Theorem 5.1 in \cite{SY}) the only stable orientable minimal surface in three-manifolds with positive scalar curvature are two-spheres.  Thus we can assume that the index of $\Sigma_N$ is exactly $1$.  Observe that $\Sigma_N\cap P_0\neq \emptyset$ for each $N$. Otherwise, one of the two handlebodies determined by the Heegaard surface $\Sigma_N$ admits an embedded projective plane $P_0$, which is impossible.

Note that $\Sigma_N$ must also intersect each two-sphere $P_t$ for $0\leq t \leq N-1$.  Otherwise, letting $L>0$ be the infimal number such that $P_t\cap \Sigma_N$ is empty, we would get that $\Sigma_N$ is tangent to the minimal sphere $P_L$ and lies on one side of it.  This violates the maximum principle.  Thus, we can choose a point 
$$p_N^k \in P_k \cap\Sigma_n$$ for each $k\in \{1,...,N-2\}$.

Let $k\in \{1,...,N-2\}$.
The monotonicity formula (Proposition 1.16 and equation 7.5 in \cite{CM}) gives some $0<r_0<1/2$ and $C_0>0$ independent of $N$ and $k\in \{1,...,N-2\}$ (since the $[0,1]\times\mathbb{S}^2$ regions between $P_k$ and $P_{k+1}$ for $k \in \{1,...,N-2\}$ are pairwise isometric and the computations leading to the monotonicity formula are local) such that
\begin{equation}\label{eachone}
\mbox{Area}(\Sigma_N\cap B_{p_N^k}(r_0))\geq C_0r_0^2\quad \mbox{for } k=1,...,N-2.
\end{equation}

Since $r_0<1$ the balls $\{B_{p_N^1}(r_i), B_{p_N^2}(r_i),...B_{p_N^{N-2}}(r_i)\}$ are pairwise disjoint and thus we obtain from \eqref{eachone}
\begin{equation}\label{arealower}
\mbox{Area}(\Sigma_N)\geq \sum_{k=1}^{N-2}\mbox{Area}(\Sigma_N\cap B_{p_N^k}(r_0))\geq (N-2) C_0r_0^2.
\end{equation}
On the other hand, according to \cite[Proposition A.1 (iii)]{MaNe}, if a three-manifold has $\mbox{Scal}\geq k_0$ then any immersed orientable index $1$ genus $g$ minimal surface $\Sigma$ satisfies:
\begin{equation}
k_0\mbox{Area}(\Sigma)\leq 24\pi + 16\pi(g/2-\lfloor g/2\rfloor).
\end{equation}
Specializing to our case where $k_0=1$ and $\Sigma_N \subset M_N$, we get
\begin{equation}\label{areascalar}
\mbox{Area}(\Sigma_N)\leq 32\pi.
\end{equation}
Taking $N$ large enough, \eqref{areascalar} contradicts \eqref{arealower}.
\end{proof}

We also give an example of a three-manifold admitting a genus $1$ Heegaard surface that is not strongly irreducible and in fact contains no minimal Heegaard torus of index at most one.  In the following, let $\mathbb{S}^n(a)$ denote the round $n$-sphere of radius $a$. 

\begin{prop}\label{counterexample}
    Let $M_a=\mathbb{S}^2(1)\times\mathbb{S}^1(a)$ be endowed with the product metric.  For $a$ large enough, $M_a$ admits no minimal Heegaard torus of index at most $1$.
\end{prop}

\begin{proof}
The scalar curvature of $M_a$ is identically equal to $2$.  Thus again by \cite[Proposition A.1 (iii)]{MaNe} the area of a purported index $1$ minimal torus $T_a\subset M_a$ satisfies
\begin{equation}\label{uppertori}
\mbox{Area}(T_a)\leq 16\pi.
\end{equation}
On the other hand, $T_a$ must intersect each $\mathbb{S}^2(1)\times \{t\}$ for all $t\in\mathbb{S}^1(a)$.  Otherwise since $\mathbb{S}^2(1)\times\{t\}$ are minimal surfaces we obtain a contradiction to the maximum principle. 

Fix $(x,t)\in \mathbb{S}^2(1)\times\mathbb{S}^1(a)$.  Note that the metrics $g_a$ based at $(x,t)$ converge to $\mathbb{S}^2(1)\times\mathbb{R}$ and recall that the computations leading to the monotonicity formula are local.  Thus for $a$ large enough, there are $r_0>0$ and $C_0>0$ so that any minimal surface $\Sigma$ passing through $(x,t)$ satisfies
\begin{equation} \label{mon form 2}
\mbox{Area}(T_a\cap B_{(x,t)}(r_0))\geq C_0 r^2.
\end{equation}
for all $0<r\leq r_0$.
Divide $\mathbb{S}^1(a)$ into $\lfloor a/r_0\rfloor$ sub-intervals $I_1,...,I_{\lfloor a/r_0\rfloor}$ of length at least $r_0$.  It follows from (\ref{mon form 2}) that 
\begin{equation}
\mbox{Area}(T_a\cap (\mathbb{S}^2(1)\times I_i))\geq \frac{C_0}{4}r_0^2.
\end{equation}
Since $\{\mathbb{S}^2(1)\times I_i\}_{i=1,...,\lfloor a/r_0\rfloor}$ are pairwise disjoint, 
\begin{equation}\label{areabound}
\mbox{Area}(T_a)\geq \frac{C_0}{4}r_0^2 \lfloor a/r_0\rfloor. 
\end{equation}
For $a$ large enough, \eqref{areabound} contradicts \eqref{uppertori}.
\end{proof}

\vspace{2em}

\section*{Appendix: Proof of (\ref{boundtrue}) in the proof of Theorem \ref{smoothminmax}}

The following lemma was essentially proved in \cite{MorganRos} but we give another proof for the sake of completeness. The definition of ``saturated set'' is recalled in Subsection \ref{mmdef}.

\begin{lemma} \label{canapply1}
Assume that ${N}\subset (M^{n+1},g)$, that the metric $g$ is bumpy and that we are given  a partition $\partial N = \Gamma_0 \cup \Gamma_1$, where $\Gamma_0$, $\Gamma_1$ are unions of connected components. Let ${\Pi} $ be a saturated set generated by smooth sweepouts $\{\Sigma_t\}$ such that
\begin{itemize}
\item $\mathcal{A}(\{\Sigma_t\}) =[|N|]$,
\item $\Sigma_0=\Gamma_0$ and $\Sigma_1=\Gamma_1$.
\end{itemize}
Suppose that $\Gamma_0$ is a stable minimal hypersurface. Then 
$$ W(N,\Pi) > \mbox{Area}(\Gamma_0).$$
Consequently, if $\Gamma_0$ and $\Gamma_1$ are both stable minimal hypersurfaces, then (\ref{boundtrue}) holds.
\end{lemma}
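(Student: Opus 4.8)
\emph{Plan.} Since $\Sigma_0=\Gamma_0$ for every sweepout in $\Pi$ we have $W(N,\Pi)\geq Area(\Gamma_0)$ for free, so everything is in producing a strict gap. I would reduce the statement to showing that the relative isoperimetric profile of $N$ \emph{anchored at} $\Gamma_0$,
$$I(v):=\inf\{\,\mathrm{Per}(E;\interior N)\ :\ E\subset N \text{ of finite perimeter},\ \Vol(E)=v,\ \Gamma_0\subset\partial E\,\},$$
satisfies $I(v_0)>Area(\Gamma_0)$ for some small $v_0>0$. Granting this, the lemma follows immediately: given $\{\Sigma_t\}\in\Pi$, let $B(\Sigma_t)$ be the region in $N$ between $\Gamma_0$ and $\Sigma_t$; by the definition of a smooth sweepout $t\mapsto\Vol(B(\Sigma_t))$ is continuous with $\Vol(B(\Sigma_0))=0$ and $\Vol(B(\Sigma_1))=\Vol(N)$, so (taking $v_0<\Vol(N)$) there is a first $t^*$ with $\Vol(B(\Sigma_{t^*}))=v_0$; then $B(\Sigma_{t^*})$ is admissible for $I(v_0)$ and $\mathrm{Per}(B(\Sigma_{t^*});\interior N)=Area(\Sigma_{t^*})$ because $\Sigma_{t^*}\subset\interior N$, so $\max_t Area(\Sigma_t)\geq I(v_0)$. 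Taking the infimum over $\Pi$ gives $W(N,\Pi)\geq I(v_0)>Area(\Gamma_0)$. Running the same argument from the $\Gamma_1$--end of the sweepouts (reversing the parameter) yields the ``consequently'' clause, i.e. (\ref{boundtrue}).

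Now for $I(v_0)>Area(\Gamma_0)$. I would first note that $\Gamma_0$, being a two-sided stable minimal hypersurface in a bumpy metric, has Jacobi operator with trivial kernel, hence $\lambda_1(\Gamma_0)>0$: $\Gamma_0$ is \emph{strictly} stable. The bound $\lim_{v\to0^+}I(v)=Area(\Gamma_0)$ comes from using as competitors the leaves of the mean-concave (towards $\Gamma_0$) foliation of a neighbourhood of $\Gamma_0$ recalled in Subsection \ref{sec:squeezing}. For the matching lower bound I would take a volume-constrained minimizer $E_v$ of $I(v)$ (existence by the direct method; a standard argument using the Euclidean isoperimetric inequality rules out competitors whose boundary strays a fixed distance from $\Gamma_0$, since a portion of $E$ of volume of order $v$ lying away from $\Gamma_0$ carries extra relative perimeter of order $v^{(n-1)/n}$, far exceeding the $O(v^2)$ excess in play, so the minimizer stays near $\Gamma_0$), invoke interior regularity so that $S_v:=\partial E_v\cap\interior N$ is a smooth constant mean curvature hypersurface, and then use curvature and excess estimates to conclude that for small $v$ the surface $S_v$ is a normal graph $\{\exp_x(f_v(x)\nu(x)):x\in\Gamma_0\}$ with $f_v\geq0$, $f_v\not\equiv0$ and $\|f_v\|_{C^2}\to0$ as $v\to0$. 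The second-variation expansion $Area(S_v)=Area(\Gamma_0)+\tfrac12 Q_{\Gamma_0}(f_v,f_v)+o(\|f_v\|_{H^1}^2)$, together with $Q_{\Gamma_0}(f_v,f_v)\geq\lambda_1(\Gamma_0)\|f_v\|_{L^2}^2>0$ and the equivalence $Q_{\Gamma_0}(u,u)\simeq\|u\|_{H^1}^2$, then gives $Area(S_v)>Area(\Gamma_0)$ for all sufficiently small $v>0$, and fixing such a $v_0$ finishes the reduction.

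The main obstacle is precisely the previous paragraph: knowing that for small enclosed volume the anchored minimizers are genuinely graphical over $\Gamma_0$ and $C^2$-close to it, so that the quadratic expansion applies with the sign of the excess governed by $\lambda_1(\Gamma_0)$. This is essentially the theorem of Morgan--Ros \cite{MorganRos} that a strictly stable minimal (more generally CMC) hypersurface is a strict local minimizer of perimeter at its enclosed volume, and it is here that strict stability --- hence the bumpiness hypothesis --- is genuinely used. A convenient shortcut for part of the regularity bookkeeping is to compare $E_v$ directly with the leaf of the foliation of Subsection \ref{sec:squeezing} enclosing volume $v$: that leaf has area $Area(\Gamma_0)+cv^2+o(v^2)$ with $c>0$ by strict stability, which pins $I(v)$ between $Area(\Gamma_0)$ and $Area(\Gamma_0)+cv^2+o(v^2)$ and lightens the regularity needed for the lower bound.
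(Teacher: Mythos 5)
Your overall strategy (extract a slice by a volume-crossing argument, then invoke strict local minimality of the strictly stable $\Gamma_0$ in the spirit of Morgan--Ros) is a legitimate alternative to the paper's route, and indeed the paper credits \cite{MorganRos} with essentially this lemma; but as written there is a genuine gap at the central claim $I(v_0)>Area(\Gamma_0)$. With the anchoring constraint ``$\Gamma_0\subset\partial E$'' taken literally (topological boundary), the claim is false: take $E$ to be one half-ball of volume $\approx v$ centered at a point of $\Gamma_0$, together with a dust of half-balls $B(x_i,2^{-i}\epsilon)\cap N$ centered at a dense sequence $\{x_i\}\subset\Gamma_0$. Then $\Gamma_0\subset\partial E$, $\Vol(E)=v$, and the relative perimeter in $\interior N$ is $O(v^{2/3})+O(\epsilon^2)$, far below $Area(\Gamma_0)$, so $I(v)\to 0$. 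What the sweepout slices actually provide is much more: $B(\Sigma_{t^*})$ is a region with $\partial[B(\Sigma_{t^*})]=[\Sigma_{t^*}]-[\Gamma_0]$ as currents, with $\Sigma_{t^*}$ supported in $\interior N$; the anchoring must be imposed homologically (in terms of currents or of the reduced boundary with the correct density), not via the topological boundary. Once you do that, the constraint class is not closed under the $L^1$ convergence used in the direct method, so ``existence of a volume-constrained minimizer $E_v$, interior regularity, graphicality'' is not automatic either: a minimizing sequence can lose the anchoring in the limit, and one has to work with a relaxed problem and rule this out, or avoid minimizers altogether.

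Even with the correct class, the strict inequality is exactly the hard point and your sketch does not close it. Morgan--Ros gives strict (volume-constrained) minimality only against competitors confined to a tubular neighborhood of $\Gamma_0$, whereas a slice enclosing volume $v_0$ can carry thin tentacles or small components far from $\Gamma_0$ without violating your excess bound; before the neighborhood statement applies you need a cut-and-cap step (coarea slicing to find a short intersection with the boundary of the tube, then capping or replacing the outside part by a minimizer, plus the constancy theorem to identify the projected limit with $\pm\Gamma_0$). This is precisely what the paper's appendix does, combined with the Marques--Neves squeezing map $P_t$ of Section \ref{sec:squeezing} (which is where strict stability, hence bumpiness, enters), rather than through graphicality and a second-variation expansion of anchored minimizers; your ``curvature and excess estimates'' for $E_v$ are the entire content of that step and are not established. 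Moreover the closing ``shortcut'' yields nothing toward strictness: comparing with the foliation leaf only pins $I(v)$ \emph{from above} by $Area(\Gamma_0)+cv^2+o(v^2)$, and ``pinned between $Area(\Gamma_0)$ and $Area(\Gamma_0)+cv^2$'' is compatible with $I(v)=Area(\Gamma_0)$, which is exactly what you must exclude. Finally, a minor point: the behaviour of $t\mapsto\Vol(B(\Sigma_t))$ at the endpoints is only controlled through the $\mathbf{F}$-convergence in Definition \ref{defsmoothsweep}, so the intermediate-value step needs the small-or-full volume dichotomy the paper extracts from $\mathbf{F}$-closeness; this part is easily repaired, but the two issues above are not cosmetic.
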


\begin{proof}

Given an embedded surface $\Gamma'$, denote by $|\Gamma'|$ the varifold induced by $\Gamma'$ with multiplicity one. 
In this proof, we will use the notion of integral cycles (namely integral currents without boundary): by abuse of notations, an oriented closed surface $T$ will be identified with the integral current it induces. Let $\mathbf{M}$ denote the mass defined on the space of currents, and let $||T||(U)$ be the mass of the restriction of $T$ to a subset $U$. Let $\mathbf{F}$ denote the varifold metric
as defined in \cite[page 66]{P}. For more information on geometric measure theory, see \cite{Simon84}.

Since any sweepout $\{\Sigma_t\}\in \Pi$ sweeps out $N$ non trivially, for all $\epsilon_0>0$ small enough there is an element $t\in[0,1]$ for which $\mathbf{F}(|\Sigma_t|, |\Gamma_0|) \in (\epsilon_0,2\epsilon_0)$ when $i$ is large enough. Let $\mathbf{B}^{\mathbf{F}}_{\epsilon}(|\Gamma_0|)$ be the $\mathbf{F}$-ball of radius $\epsilon$ centered at $|\Gamma_0|$ in $$\mathcal{V}_2(M):=\text{closure in the weak topology of the space of rectifiable $2$-varifolds}.$$ 

\textbf{Claim:} For $\epsilon_0$ small enough, there is a $\delta>0$ such that for any integral cycle $T\in\mathcal{Z}_2(M)$ satisfying 
$$|T|\in A:= \mathbf{B}^{\mathbf{F}}_{2\epsilon_0} (|\Gamma_0|)\backslash \mathbf{B}^{\mathbf{F}}_{\epsilon_0}(|\Gamma_0|),$$
we have $\mathbf{M}(T) \geq \mathbf{M}(\Gamma_0) +\delta$.

The lemma readily follows from this claim. To prove the latter, we argue by contradiction and consider a sequence of cycles $T_i\in\mathcal{Z}_2(M)$ with $|T_i|\in A$, and a sequence of positive numbers $\delta_i$ going to zero such that 
$$ \mathbf{M}(T_i) \leq \mathbf{M}(\Gamma_0) +\delta_i.$$ Let $\Omega_r$ be an $r$-neighborhood of $\Gamma_0$ so that there is a family of area-decreasing maps $\{P_t\}_{t\in[0,1]}$ as in \cite[Proposition 5.7]{MaNeindexbound}. Note that $||T_i||(M\backslash \Omega_{r/2})$ is smaller than $\kappa.\epsilon_0$ where $\kappa=\kappa(\Gamma_0,r)\geq 1$ is a constant. By the properties of $\Omega_r$, if we fix $\epsilon_0$ small enough then for any integral cycle $\hat{T}$ with $|\hat{T}| \in \mathbf{B}^{\mathbf{F}}_{(1+2\kappa)\epsilon_0}(|\Gamma_0|)$ and support in $\bar{\Omega}_r$ we have by the constancy theorem \cite[Theorem 26.27]{Simon84}:
\begin{equation} \label{constancy}
\quad (P_1)_\sharp \hat{T} = \pm \Gamma_0 \text{ and }\mathbf{M}(\Gamma_0)\leq \mathbf{M}(\hat{T}).
\end{equation}
Here $(P_1)_\sharp \hat{T}$ denotes the push-forward of the current $\hat{T}$ by the Lipschitz map $P_1$.
For almost all $r'\in(r/2,r)$, we can minimize the part of $T_i$ outside $\Omega_{r'}$, by the monotonicity formula (fix $\epsilon_0$ small) we get an integral cycle $T_i'$ coinciding with $T_i$ inside $\Omega_{r'}$ but area-minimizing outside $\bar{\Omega}_{r'}$, and satisfying 
$$\spt(T_i') \subset \Omega_r \text{ and } \mathbf{M}(T_i')\leq \mathbf{M}(T_i).$$ 
By the choice of the sequence $T_i$, 
\begin{equation} \label{intuitif}
\mathbf{M}(\Gamma_0)\leq \mathbf{M}(T_i')\leq \mathbf{M}(T_i) \leq \mathbf{M}(\Gamma_0) +\delta_i.
\end{equation}
Note that by construction $\mathbf{F}(|T_i|,|T_i'|) \leq 2 ||T_i||(M\backslash \Omega_{r'})$. Since $\delta_i$ goes to zero, $r'=r'(i)\in(r/2,r)$ can be chosen so that the mass $ ||T_i||(M\backslash \Omega_{r'}) $ also converges to zero. Indeed, either $||T_i||(\Omega_{r}\backslash\Omega_{r/2})$ goes to zero or not. In the first case, let $f$ be the function defined before Proposition 5.7 in \cite{MaNeindexbound}. By the coarea formula, we find $r'\in(r/2,r)$ such that $\langle T_i, f, r'\rangle $ is an integral current with arbitrarily small mass (here, $\langle T_i, f, r'\rangle $ denotes the slice current as defined in \cite[Definition 28.4]{Simon}). Consequently $||T_i'||(M) -||T_i||(\Omega_{r'})$ is arbitrarily small. Then $||T_i||(\Omega_{r'})$ is arbitrarily close to $\mathbf{M}(\Gamma_0)$ because the integral cycle $T_i'$ satisfies (\ref{intuitif}). Since $\mathbf{M}(T_i) = ||T_i||(\Omega_{r'}) +||T_i||(M\backslash \Omega_{r'})$, it forces $||T_i||(M\backslash\Omega_{r'})$ to go to zero too. In the second case, namely if $||T_i||(\Omega_{r}\backslash\Omega_{r/2})$ does not go to zero, we choose $r'$ tending to $r$ as $i\in \infty$, such that $||T'_i||(\Omega_{r}\backslash\Omega_{r/2})$ is also bounded away from zero (for a subsequence in $i$). Then by the computation in the proof of \cite[Proposition 5.7]{MaNeindexbound} and supposing (\ref{intuitif}) true, the derivative of $(P_t)_\sharp|T_i'|$ is uniformly bounded above by a negative constant for $t\in[0,t_0]$ where $t_0 >0$ is independent of $i$. This contradicts the upper bound in (\ref{intuitif}). To summarize, we just showed that for a certain choice of $r'=r'(i)$, we have $\lim \mathbf{F}(|T_i|,|T_i'|)=0$.

Besides, by the compactness theorem for integral cycles in the flat topology \cite[Theorem 27.3]{Simon84} and by lower semicontinuity of the mass  \cite[26.13]{Simon84}, $T_i'$ converges subsequentially to a current $T'_\infty$ in the flat topology with mass at most $\mathbf{M}(\Gamma_0)$ and $(P_1)_\sharp T'_i$ is equal to $\pm \Gamma_0$ for $i$ large, by (\ref{constancy}). Hence, since $T'_\infty$ has support in $\Gamma_0$ by \cite[Proposition 5.7, (iv)]{MaNeindexbound}, $T'_\infty=\pm \Gamma_0$ and there is no mass cancellation (i.e. the mass of the limit is the limit of masses) so $|T'_i|$ converges to $|\Gamma_0|$ as varifolds. In conclusion, $\mathbf{F}(|T_i'|,|\Gamma_0|)$ goes to zero, and 
$$\mathbf{F}(|T_i|,|\Gamma_0|) \leq \mathbf{F}(|T_i|,|T_i'|)+ \mathbf{F}(|T_i'|,|\Gamma_0|)$$
also converges to zero, contradicting the fact that $T_i \in A$. This proves the Claim and thus the Lemma.

\end{proof}

\bibliographystyle{plain}
\bibliography{bib.bib}

\end{document}